\newcommand{\N}{\mathbb{N}}
\newcommand{\V}{\mathbf{V}}
\newcommand{\embeds}{\hookrightarrow}
\newcommand{\onto}{\twoheadrightarrow}
\renewcommand{\hat}{\widehat}
\newcommand{\liff}{\leftrightarrow}
\newcommand{\dual}{\partial}
\newcommand{\isom}{\cong}
\newcommand{\Q}{\mathbb{Q}}
\newcommand{\R}{\mathbb{R}}
\renewcommand{\leq}{\leqslant}
\renewcommand{\geq}{\geqslant}
\theoremstyle{plain}
\newtheorem{thm}{Theorem}
\newtheorem*{dualitythm}{Stone-Priestley Duality Theorem}
\newtheorem{lem}[thm]{Lemma}
\newtheorem{cor}[thm]{Corollary}
\newtheorem{prop}[thm]{Proposition}
\theoremstyle{definition}
\newtheorem{dfn}[thm]{Definition}
\newtheorem{exa}[thm]{Example}
\newtheorem{rem}[thm]{Remark}
\newtheorem*{notation}{Notation}
\numberwithin{thm}{section}
\newcommand{\lincol}{black}
\newcommand{\linth}{thick}
\newcommand{\po}[2][\pocol]{\filldraw[#1](#2) circle (2 pt);}
\newcommand{\li}[1]{\draw[\linth,\lincol] #1;}
\let\oldmarginpar\marginpar
\renewcommand\marginpar[1]{\-\oldmarginpar[\raggedleft\footnotesize #1]{\raggedright\footnotesize #1}}
\newcounter{mnotes}
\newcommand{\iplus}{\, \overline{\oplus} \,}
\newcommand{\iminus}{\, \overline{\ominus} \,}
\newcommand{\dom}{\mathrm{dom}}
\DeclareMathOperator{\C}{\rm C}
\title[Sheaves for MV-algebras and abelian $\ell$-groups via duality]{Sheaf representations of MV-algebras and lattice-ordered abelian groups via duality}
\author[M. Gehrke]{Mai Gehrke}
\address[M. Gehrke]{LIAFA, CNRS and Universit\'e Paris Diderot, Paris 7\\Case 7014\\F-75205 Paris Cedex 13\\France}
\email{mgehrke@liafa.univ-paris-diderot.fr}
\author[S. J. van Gool]{Samuel J. van Gool}
\address[S. J. van Gool]{Universit\"at Bern \\ Mathematisches Institut \\ Sidlerstrasse 5 \\ 3012 Bern \\ Switzerland }
\email{samvangool@me.com}
\author[V. Marra]{Vincenzo Marra}
\address[V. Marra]{Universit\`a degli Studi di Milano\\Dipartimento di Matematica ``Federigo Enriques''\\via Cesare Saldini, 50\\20133 Milano, Italy}
\email{vincenzo.marra@unimi.it}
\subjclass[2010]{Primary: 06D35. Secondary: 06F20; 06D50; 18F20; 54B40.}
\keywords{MV-algebra, lattice-ordered abelian group, distributive lattice, Stone duality, Priestley duality, sheaf representation, \'etale decomposition, spectral space, compact Hausdorff space, stably compact space.}
\begin{document}
\hspace{1mm}
\vspace{-5mm}

\maketitle

\vspace{-9mm}

\begin{abstract}
We study representations of MV-algebras --- equivalently,  unital lattice-ordered a\-be\-lian groups --- through the lens of Stone-Priestley duality, using canonical extensions as an essential tool. Specifically, the theory of canonical extensions implies that the (Stone-Priestley) dual spaces of MV-algebras carry the structure of  topological partial commutative ordered semigroups. We use this structure to obtain two different decompositions of such spaces, one indexed over the prime MV-spectrum, the other over the maximal MV-spectrum. These decompositions yield sheaf representations of MV-algebras, 
using a new and purely duality-theoretic result that relates certain sheaf representations of distributive lattices to  decompositions of their dual spaces.
Importantly, the proofs of the MV-algebraic representation theorems that we obtain in this way are distinguished from the existing work on this topic by the following features: (1) we use only basic algebraic facts about MV-algebras; (2) we show that the two aforementioned sheaf representations are special cases of a common result, with potential for generalizations; and (3) we show that these results are strongly related to the structure of the Stone-Priestley duals of MV-algebras. 
In addition, using our analysis of these decompositions, we prove that MV-algebras with isomorphic underlying lattices have homeomorphic maximal MV-spectra. This result is an MV-algebraic generalization of a classical theorem by Kaplansky stating that two compact Hausdorff spaces are homeomorphic if, and only if, the lattices of continuous $[0,1]$-valued functions on the spaces are isomorphic.
\end{abstract}
\section{Introduction}
\emph{MV-algebras} were introduced by C. C. Chang \cite{Chang59} to provide algebraic semantics for {\L}ukasiewicz infinite-valued propositional logic \cite{cdm}, thus playing an analogous role to that of Boolean algebras in classical propositional logic. As proved in \cite[Thm. 3.9]{Mun1986}, MV-algebras are categorically equivalent to unital lattice-ordered abelian groups: 
the unit interval of any such group forms an MV-algebra, from which the original group and order structure can be naturally recovered; and all MV-algebras arise in this manner. MV-algebras also entertain a wide range of connections with other realms of mathematics, including probability theory, C$^*$-algebras and polyhedral geometry; see \cite{Mun2011} for a recent account. 

In this paper we study the \emph{Stone-Priestley dual spaces} of MV-algebras, by which we mean the Stone-Priestley dual spaces of the underlying lattices of MV-algebras equipped with the structure coming from the MV-algebraic operations.
Stone duality \cite{Sto1936} is of central importance in the theory of Boolean algebras: the category of Boolean algebras and their homomorphisms is equivalent to the opposite of the category of  compact Hausdorff zero-dimensional spaces---known as \emph{Boolean spaces}---and the continuous maps between them. Stone himself extended this result to distributive lattices in \cite{Stone1937}, obtaining a duality with the category of spectral spaces and so-called spectral maps.  Later, Priestley \cite{Priestley1970} obtained a useful variant of Stone duality for distributive lattices which uses \emph{Priestley spaces}, i.e.\ Boolean spaces equipped with a partial order having the topological separation property of being `totally order disconnected'. 

There are several  dualities for MV-algebras and lattice-ordered groups (henceforth abbreviated as ``$\ell$-groups'') that generalize Stone duality for Boolean algebras. 
These results can be roughly divided into two strands. In the first strand one views MV-algebras as groups with a compatible lattice order, in light of the categorical equivalence mentioned above, and looks for representations by continuous real-valued functions in the style of the Stone-Gelfand duality theory for real or commutative C$^{*}$-algebras.
At the center of this first strand, there are the Baker-Beynon duality between finitely presented abelian $\ell$-groups and homogeneous rational polyhedral sets \cite{Baker1968, Beynon1977}, and the affine version of this result which shows that finitely presented MV-algebras are the coordinate algebras of rational polyhedra under piecewise linear maps that preserve the arithmetic structure of the polyhedra in an appropriate sense  \cite{MaSpa2012, MaSpa2013}. 
These theorems highlight the profound relationship between abelian $\ell$-groups, MV-algebras, and piecewise linear arithmetic topology; for further recent manifestations of this phenomenon, see e.g.\ \cite{Ma2013, MaMaMu2007, CabMun2012, Ma2009, MaMu2007, Pan2008}. Dualities for other classes of MV-algebras and $\ell$-groups that fall into this first strand can be found in \cite{CiMa2012, CiDuMu2004, CiMu2006}. 

The second strand of duality results for MV-algebras arises from viewing MV-algebras as distributive lattices with two additional operations $\neg$ and $\oplus$ and investigating the additional structure that these operations yield on the Stone-Priestley dual space of the distributive lattice underlying the MV-algebra. This approach can be traced back several decades in the literature on duality for MV-algebras and $\ell$-groups, see e.g. \cite{Bel1986, Ma1990, Ma1994, Ma1996, MaPr1998, GePr07a,GePr07b}. In particular, \cite{Ma1996} establishes a duality theorem between $\ell$-groups and a class of spaces dubbed ``$\ell$-spaces''. The recent papers \cite{GePr07a, GePr07b} show that first-order axioms on the Stone-Priestley duals will capture a large class of varieties including MV-algebras, but these papers do not consider MV-algebras specifically. In contrast with the first strand, it seems that until now there have not been significant applications of dualities of the second strand in the subsequent literature on MV-algebras and $\ell$-groups.

In this paper, we will revisit this second strand of  dualities for MV-algebras. In particular, we apply the general insights gained in \cite{GePr07a, GePr07b} to the theory of MV-algebras {\it per se}, providing a systematic approach to sheaf representations of MV-algebras by means of their dual spaces. On the way to these results we  obtain an MV-algebraic generalization of a classical theorem by Kaplansky on the lattice of continuous real-valued functions on a compact Hausdorff space \cite{kaplansky}. We will now explain our methodology and the ensuing applications in further detail.

A large part of the mathematical machinery used in this paper originates in the theory of \emph{canonical extensions}, an essential tool in the study of additional operations relative to Stone-Priestley duality. Canonical extensions were introduced by J\'onsson and Tarski for Boolean algebras \cite{JT1952} and generalized to distributive lattices by the first-named author and J\'onsson in \cite{GehJon1994}. Canonical extensions have been used in the study of modal logic  to provide algebraic proofs of the canonicity of modal axioms; see e.g.\ \cite{GNV2005}. The first-named author and Priestley showed in \cite{GePr00} that one of the defining axioms of MV-algebras (specifically, equation (\ref{mvlaw}) in Section~\ref{ssec:mv} below) is not canonical. The papers \cite{GePr07a,GePr07b} studied canonical extensions and duality for double quasi-operators, i.e.\ operations that preserve or reverse $\vee$ and $\wedge$ in each coordinate, of which the MV-algebraic operations $\neg$ and $\oplus$ are prime examples. We rely on the results obtained in \cite{GePr07a,GePr07b}, with particular focus on their repercussions for MV-algebras. In particular, these results enable us to prove in Proposition~\ref{prop:plusprops} that the dual Stone-Priestley space of any MV-algebra has the structure of a topological partial commutative ordered semigroup.

In order to study \emph{sheaf representations}, we develop a new result in duality theory, which is of interest in its own right and may very well be applicable to other varieties of distributive lattice expansions beyond MV-algebras. In particular, in Theorem~\ref{thm:decompositiongivessheaf} we relate certain sheaf representations of distributive lattices to \emph{\'etale decompositions} of their Stone-Priestley dual spaces, generalizing results from the first-named author's PhD thesis \cite{Ge1991}. In addition, we show that this theorem lifts to distributive lattices with additional operations. These results are best phrased in the setting of \emph{stably compact spaces}, a class of topological spaces which are, roughly speaking, `stable' under the operation of taking the \emph{co-compact dual}; see  \cite{Jung2004, Lawson2010} and Theorem~\ref{thm:stabcompsp} below for details. For now, suffice it to  say that stably compact spaces form a proper generalization of spectral spaces, as they are no longer required to have a basis consisting of compact-open sets. In particular, all compact Hausdorff spaces are stably compact. The setting of stably compact spaces therefore allows us to uniformly treat sheaf representations over both spectral and compact Hausdorff spaces. Using canonical extensions, we show in the remainder of the paper that dual spaces of MV-algebras always admit \'etale decompositions in the sense of Theorem~\ref{thm:decompositiongivessheaf}, thus providing a new view of sheaf representations for MV-algebras. 

Sheaf representations for $\ell$-groups and MV-algebras originate with Keimel \cite{Keimel71}, \cite[Chapt. 10]{BKW1977}, who established {\it inter alia} a result for unital abelian $\ell$-groups that translates in a standard manner to the following result for MV-algebras: every MV-algebra is isomorphic to the global sections of a sheaf of local MV-algebras on its spectral space of prime MV-ideals equipped with the spectral topology. Here, an MV-algebra is \emph{local} if it has exactly one maximal ideal. From Keimel's work  one easily obtains  a compact Hausdorff representation, by restricting the base space to the subspace of maximal MV-ideals. In this form, the result was first proved for MV-algebras by Filipoiu and Georgescu \cite{FilGeo1995}; their paper is independent of Keimel's  treatment of $\ell$-groups. One can also consider the collection of prime MV-ideals equipped with the co-compact dual of the spectral topology.  This base space then affords a sheaf representation whose stalks are totally ordered MV-algebras, which may be regarded as easier to understand than local MV-algebras. The price one pays for this simplification is that the base space is a spectral space which is not $T_1$ in general, so that it has a non-trivial specialization order. Such a  representation for abelian (not necessarily unital) $\ell$-groups was established  by Yang in his PhD thesis \cite[Prop. 5.1.2]{Yang06}; and in \cite[Rem. 5.3.11]{Yang06} the author remarked that the results do translate to MV-algebras via the  categorical equivalence mentioned above. In Poveda's PhD thesis \cite[Thm. 6.7]{Poveda07}, the same sheaf representation of MV-algebras via the co-compact dual of the spectral topology on prime MV-ideals was obtained. The interested reader is also referred to the papers \cite{RuYa2008, DuPo2010}.  For a further recent proof of the sheaf representation for $\ell$-groups via the co-compact dual of the spectral topology, see  \cite{Schwartz2013}. For a more thorough historical account of sheaf representations in universal algebra, with particular attention to lattice-ordered groups and rings, see \cite{Kei2013} and the references therein.

Apart from giving a unified account of these results on sheaf representations for MV-algebras, we also use our analysis of the dual space of an MV-algebra to show that MV-algebras with isomorphic underlying lattices have homeomorphic maximal MV-spectra equipped with the spectral topology (Theorem~\ref{thm:kaplansky}). This result generalizes a theorem by Kaplansky \cite{kaplansky} on lattices of continuous functions; see Section~\ref{sec:kaplansky} below for a more detailed comparison.

\vspace{3mm}

{\bf Outline of the paper.} 
In Section~\ref{s:stone-priestley} we fix notation and recall topological prerequisites on Stone-Priestley duality and stably compact spaces. 
In Section~\ref{sec:sheaf} we prove the duality-theoretic results on \'etale decompositions of a dual space and sheaf representations of the corresponding algebra.
In Section~\ref{ssec:mv} we discuss background on MV-algebras, and in Section~\ref{sec:canext} we recall the necessary preliminaries on canonical extensions. 
The  analysis of the dual space of an MV-algebra, carried out in Section~\ref{s:mvdual}, allows us to obtain in Section~\ref{sec:decomp} an \'etale decomposition of the dual Stone-Priestley space of an MV-algebra indexed by the prime MV-spectrum, in the sense of Section~\ref{sec:sheaf}. 
In Section~\ref{sec:kaplansky} we prove the generalization of Kaplansky's theorem (Theorem~\ref{thm:kaplansky}). 
By combining the results of Sections~\ref{sec:sheaf} and \ref{sec:decomp}, in Section~\ref{sec:spectralproduct} we establish both  sheaf representations of an MV-algebra discussed above. As related topics we also discuss the  Chinese Remainder Theorem for MV-algebras (Subsection~\ref{ss:CRT}) and the construction of an explicit MV-algebraic term representing the sections occurring in the sheaf representations (Subsection~\ref{subsec:termdef}). 
Section~\ref{sec:further} points out  directions for further research.
\section{Stone-Priestley duality and stably compact spaces}\label{s:stone-priestley}
 This first part of this section contains the needed background on Stone-Priestley duality for distributive lattices. The second part of the section introduces stably compact spaces, the natural setting for the sheaf representations obtained in the sequel. For more on these two topics see  \cite{Priestley1994} and \cite{Jung2004, Lawson2010}, respectively, and the references therein.

All distributive lattices in this paper are assumed to have a minimum and a maximum element, denoted by $0$ and $1$, respectively, and lattice homomorphisms are assumed to preserve them. Throughout, `ideal' means `lattice ideal', and `filter' means `lattice filter'. We use $D$ to denote a distributive lattice, and we write $X(D)$ for the collection of homomorphisms $D\to\{0,1\}$. Then $X(D)$ is the  underlying set of the dual space of $D$. 
 Some authors identify $X(D)$ with the set of prime filters of $D$, while others with the set of prime  ideals. The difference is immaterial, since the map $F \longmapsto F^c := \{d \in D \mid d \not \in F\}$ is a bijection between the prime  filters and the prime  ideals of $D$.  We adopt the convention of calling the elements of $X(D)$ the \emph{points} of the dual space of $D$; we denote points in $X(D)$ by  $x, y, z, \ldots$ We denote the prime  ideal (respectively, prime filter) that  $x \in X(D)$ represents by $I_x$ (respectively, $F_{x}$). Thus, $I_{x}$ ($F_{x}$) is the inverse image of $0$ (respectively, $1$) under the homomorphsim $x$. When the lattice $D$ is clear from the context we write $X$ instead of $X(D)$.
The \emph{partial order} $\leq$ on $X$ is defined by inclusion of prime ideals, or reverse inclusion of prime filters, i.e.,
\begin{align}\label{eq:orderonX}
 x \leq y \iff I_x \subseteq I_y \iff F_x \supseteq F_y.
 \end{align}

For an element $a \in D$, we define a subset $\hat{a}$ of $X$ by
\begin{align}\label{eq:stonemap} \hat{a} := \{x \in X \ | \ a \in F_x\} = \{x \in X \ | \ a \not\in I_x\}.\end{align}
The assignment $a \longmapsto \hat{a} \subseteq X$, for $a \in D$, is often referred to as the \emph{Stone map}. Note that $\hat{a}$ is a \emph{downset} in the order $\leq$, that is, if $x \in \hat{a}$ and $x' \leq x$, then $x' \in \hat{a}$. Its set-theoretic complement $\hat{a}^{\, c}$ is therefore an \emph{upset}, in the obvious sense.

There are three relevant topologies on the set  $X$.
\begin{enumerate}
\item The \emph{spectral} or  \emph{Stone}  or \emph{Zariski topology} $\tau^{\downarrow}$, given by taking $\{\hat{a} \ | \ a \in D\}$ as a basis for the open sets. 
\item The \emph{co-spectral} or  \emph{dually spectral} or \emph{co-Zariski topology} $\tau^{\uparrow}$, given by taking $\{\hat{a}^{\, c} \ | \ a \in D\}$ as a basis for the open sets.\footnote{
This topology coincides with the \emph{co-compact dual} or \emph{De Groot-Hochster dual} of the spectral topology, cf. Theorem~\ref{thm:stabcompsp} below, also see\ \cite{DeGroot1967, Hochster1969}.
}
\item The \emph{Priestley} or \emph{patch topology} $\tau^p$, given by taking as a subbasis for the open sets the collection $\{\hat{a} \ | \ a \in D\} \cup \{(\hat{a})^c \ | \ a \in D\}$.
\end{enumerate}
Each one of
$(X,\tau^{\downarrow})$, $(X,\tau^{\uparrow})$, and $(X,\tau^{p}, \leq)$
is liable to be called the \emph{dual space of $D$} in the literature. We always make it clear  which ``dual space'' we are referring to. Also, throughout, for  $S\subseteq X$ we write $(S,\tau^p,\leq)$, $(S,\tau^\downarrow)$ and $(S,\tau^\uparrow)$ to denote the structures on $S$ with the relative topology and order inherited from $X$. 

Recall that a topological space $(X,\tau)$ is  \emph{spectral}\footnote{
These spaces are called \emph{coherent spaces} in \cite{Johnstone1982}, but we reserve this name for a strictly weaker property, following  e.g.\ \cite{Jung2004,Lawson2010}.}
if it is compact, sober,\footnote{
A topological space is  \emph{sober} \cite[Chapt. II, Sec. 1.6]{Johnstone1982}  if any closed irreducible subset is the closure of a unique point. Here, a closed subset is \emph{irreducible} if it can not be written as a union of two closed proper subsets. Note that any sober space is $T_0$, and so are all spaces we consider in this paper.}
and its compact open sets form a basis which is closed under taking finite intersections. A map $f : (X, \tau) \to (X',\tau')$ between spectral spaces is called {\it spectral} if the inverse image under $f$ of any compact open set is compact. Note in particular that spectral maps are continuous. A topological space $(X,\tau)$ is  \emph{Boolean}\footnote{
A \emph{Stone space} in \cite{Johnstone1982}.}
if it is spectral and $T_2$, or equivalently (see, e.g., \cite[Chapt.II, Sec.4.2]{Johnstone1982}), if it is compact, $T_2$, and its clopen sets form a basis. An ordered topological space $(X,\tau,\leq)$ is  \emph{totally order-disconnected} provided that, whenever $x \nleq y$, there is a clopen downset $U \subseteq X$ such that $y \in U$ and $x \not\in U$. A compact totally order-disconnected space $(X,\tau,\leq)$ is  a \emph{Priestley space}; its underlying space $(X,\tau)$ is always Boolean \cite[Sec. 3]{Priestley1970}.
\begin{dualitythm}[\cite{Stone1937,Priestley1970}]
The category of distributive lattices with homomorphisms is dually equivalent to each of the following categories.
\begin{enumerate}
\item The category of spectral spaces with spectral maps.
\item The category of Priestley spaces with continuous order-preserving maps.
\end{enumerate}
\end{dualitythm}
\noindent We recall the construction of the pairs of contravariant functors $(F^s,G^s)$ and  $(F^p,G^p)$ leading to the dual equivalences 1 and 2 above, respectively.
\begin{itemize}

\item The functor $F^s$ sends a distributive lattice $D$ to the space $(X(D),\tau^\downarrow)$, and a lattice homomorphism $h : D \to E$ to the function $f \colon X(E) \to X(D)$ which sends $x \in X(E)$ to the point $f(x) \in X(D)$ for which $I_{f(x)} = h^{-1}(I_x)$. 

\item The functor $G^s$ sends a spectral space $(X,\tau)$ to the distributive lattice of compact open subsets of $X$, ordered by inclusion, and a spectral map $f \colon (X,\tau) \to (X',\tau')$ to the homomorphism $f^{-1}$ from $G^s(X',\tau')$ to $G^s(X,\tau)$.
\end{itemize}
\begin{itemize}
\item The functor $F^p$ sends a distributive lattice $D$ to the ordered topological space $(X(E),\tau^p,\leq)$, and a homomorphism $h \colon D \to E$ to the same function $f \colon X(E) \to X(D)$ as in the definition of $F^s$, now regarded as a function between ordered topological spaces. 
\item The functor $G^p$ sends a Priestley space $(X,\tau,\leq)$ to the distributive lattice of clopen downsets of $X$, ordered by inclusion, and a continuous order-preserving map $f \colon (X,\tau,\leq) \to (X',\tau',\leq')$ to the homomorphism $f^{-1}$ from $G^p(X',\tau',\leq')$ to $G^p(X,\tau,\leq)$.
\end{itemize}
\begin{cor}\label{cor:prdualobjects}
Any distributive lattice $D$ is naturally isomorphic to the lattice of compact open subsets of $(X,\tau^\downarrow)$, and to the lattice of clopen downsets of $(X,\tau^p,\leq)$.
\end{cor}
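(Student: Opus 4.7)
\medskip

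The plan is to extract the object-level component of the Stone-Priestley duality theorem just stated, identifying explicitly the unit of the adjunction $D \to G \circ F(D)$ as the Stone map $a \mapsto \hat{a}$ from equation~(\ref{eq:stonemap}). For the spectral case, by construction $G^s(F^s(D))$ is the lattice of compact open subsets of $(X,\tau^{\downarrow})$; for the Priestley case, $G^p(F^p(D))$ is the lattice of clopen downsets of $(X,\tau^p,\leq)$. So the content of the corollary is that in both cases the natural iso is realized by $a \mapsto \hat{a}$.

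The key steps are as follows. First, verify that $a \mapsto \hat{a}$ is a lattice homomorphism: this is immediate from the fact that the points of $X$ are lattice homomorphisms $D \to \{0,1\}$, which translates via the defining equation~(\ref{eq:stonemap}) into the identities $\widehat{a \vee b} = \hat{a} \cup \hat{b}$, $\widehat{a \wedge b} = \hat{a} \cap \hat{b}$, $\hat{0} = \emptyset$, $\hat{1} = X$. Second, injectivity follows from the prime ideal (equivalently, prime filter) theorem for distributive lattices: if $a \nleq b$ in $D$, then some prime filter contains $a$ but not $b$, witnessing $\hat{a} \nsubseteq \hat{b}$. Third, surjectivity onto the lattice of compact open sets of $(X,\tau^{\downarrow})$: each $\hat{a}$ is a basic open, and is compact because it is closed in the finer compact topology $\tau^p$; conversely, any compact open in $\tau^{\downarrow}$ is a finite union of basic opens and hence of the form $\hat{a_1} \cup \cdots \cup \hat{a_n} = \widehat{a_1 \vee \cdots \vee a_n}$. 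Fourth, surjectivity onto the clopen downsets of $(X,\tau^p,\leq)$: each $\hat{a}$ is clopen in $\tau^p$ (both $\hat{a}$ and its complement are subbasic opens) and is a downset by the definition of the order in~(\ref{eq:orderonX}); conversely, a clopen downset $U$ is compact in $\tau^p$, and for each $x \in U$ and $y \notin U$ one has $x \nleq y$, so by total order disconnectedness there exists $a \in D$ with $x \in \hat{a}$, $y \notin \hat{a}$; a standard compactness argument in $\tau^p$ then covers $U$ by finitely many such $\hat{a}$'s, whose join yields an element of $D$ whose Stone image is $U$.

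There is essentially no obstacle here, since this is a direct corollary of the duality theorem; the only substantive step is the last one (representing a clopen downset as some $\hat{a}$), which is the standard compactness plus total order disconnectedness argument. Naturality in $D$ is automatic from the fact that $(F^s, G^s)$ and $(F^p, G^p)$ are contravariant equivalences and the Stone map commutes with lattice homomorphisms by construction.
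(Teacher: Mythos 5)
Your proposal is correct and follows essentially the same route as the paper, whose proof simply invokes the Stone-Priestley Duality Theorem and observes that the unit of the equivalence is the Stone map $a \mapsto \hat{a}$; you additionally spell out the routine verifications (homomorphism, injectivity, the two surjectivity arguments) that the paper leaves implicit. (One cosmetic slip: for $U$ a clopen downset with $x \in U$ and $y \notin U$, the inequality that fails is $y \leq x$, not $x \leq y$; the separating set with $x \in \hat{a}$ and $y \notin \hat{a}$ that you then use is exactly the one furnished by $y \nleq x$, i.e.\ by an element $a \in I_y \setminus I_x$.)
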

\begin{proof}By the Stone-Priestley Duality Theorem, the  isomorphisms are given by the natural maps $\eta^s_D \colon D \to G^sF^s(D)$ and $\eta^p_D \colon D \to G^pF^p(D)$, respectively. Concretely, these maps are both equal to the Stone map defined above.
\end{proof}
\begin{cor}\label{cor:isohomeo}
The isomorphisms between two distributive lattices $D$ and $E$ are in natural 1-1 correspondence with the homeomorphisms between the dual spaces $(X(D),\tau^\downarrow)$ and $(X(E),\tau^\downarrow)$, and also with the order-preserving and order-reflecting homeomorphisms between the dual spaces $(X(D),\tau^p,\leq)$ and $(X(E),\tau^p,\leq)$.
\end{cor}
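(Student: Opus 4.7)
The plan is to derive this as a direct consequence of the Stone-Priestley Duality Theorem stated just above, together with the general categorical principle that any (dual) equivalence of categories sets up a bijection between the isomorphisms on the two sides.

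First I would apply the dual equivalence given by $(F^s,G^s)$. Any lattice isomorphism $h \colon D \to E$ yields, via $F^s$, a morphism $f \colon (X(E),\tau^\downarrow) \to (X(D),\tau^\downarrow)$ in the category of spectral spaces, and $F^sG^s \cong \id$, $G^sF^s \cong \id$ ensure that this assignment is a bijection between lattice isomorphisms and isomorphisms in the spectral category. The small verification needed is that isomorphisms in the spectral category coincide with homeomorphisms: any spectral map with a spectral inverse is certainly a homeomorphism, and conversely if $f$ is a homeomorphism between spectral spaces then both $f$ and $f^{-1}$ send compact opens to compact opens (as homeomorphisms preserve both the open and the compact sets), so both are spectral. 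This yields the first half of the corollary.

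Next I would apply the dual equivalence given by $(F^p,G^p)$ in exactly the same way. The same general argument gives a bijection between isomorphisms of $D$ and $E$ and isomorphisms in the category of Priestley spaces. An isomorphism in this category is a continuous, order-preserving map $f$ whose inverse $f^{-1}$ is also continuous and order-preserving; continuity of both says $f$ is a homeomorphism, and order-preservation of $f^{-1}$ is exactly the statement that $f$ reflects the order. Thus Priestley isomorphisms are precisely the order-preserving and order-reflecting homeomorphisms, which gives the second half.

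Naturality of the two correspondences is built into the construction: the bijections are given by the functors $F^s$ and $F^p$ themselves, with inverses $G^s$ and $G^p$, and Corollary~\ref{cor:prdualobjects} identifies $G^sF^s(D)$ and $G^pF^p(D)$ with $D$ itself via the Stone map. There is no genuine obstacle in this argument; the only thing to watch is the equivalent reformulation of ``isomorphism in the target category'' as a purely topological (respectively topological and order-theoretic) condition, and both of these reductions are routine as sketched.
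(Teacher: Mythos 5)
Your proposal is correct and follows essentially the same route as the paper, which likewise derives the corollary directly from the Stone-Priestley Duality Theorem by identifying the isomorphisms in the dual categories with homeomorphisms (respectively, order-preserving and order-reflecting homeomorphisms). The extra verification you supply for the spectral case --- that categorical isomorphisms of spectral spaces are exactly the homeomorphisms --- is accurate and merely makes explicit what the paper leaves implicit.
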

\begin{proof}
Immediate consequence of the Stone-Priestley Duality Theorem, upon noting that the isomorphisms in the category of Priestley spaces are exactly the order-preserving and order-reflecting homeomorphisms.
\end{proof}
If $(X,\tau)$ is a topological space, let us write $\leq_{\tau}$ for its \emph{specialization order}: by definition,  $x \leq_\tau y$ if, and only if, $x \in \overline {\{y\}}$, where $\overline{\; \cdot \;}$ denotes the closure operator induced by $\tau$; equivalently, for any open set $U$, if $x \in U$, then $y \in U$. The relation $\leq_{\tau}$ is a partial order on $X$ precisely when $\tau$ is $T_{0}$, and this is always the case for $\tau^{\downarrow}$, $\tau^{\uparrow}$, and $\tau^{p}$---the first two are sober, the last  is  $T_2$. 
 Recall that a partial order $\leq$ on a dual space $X(D)$ of a distributive lattice $D$ was defined in equation (\ref{eq:orderonX}) on p. \pageref{eq:orderonX}. The definitions yield the following at once.
\begin{itemize}
\item The specialization order of $\tau^\uparrow$ is equal to the partial order $\leq$ on $X$, that is, $\leq \; = \; \leq_{\tau^\uparrow}$. 
\item The specialization order of $\tau^\downarrow$ is the opposite of  the specialization order of $\tau^\uparrow$. 
\item The specialization order of $\tau^p$ is trivial. Indeed, $\tau^p$ is a $T_2$ topology, and hence {\it a fortiori} $T_1$, which means that the specialization order is trivial.\end{itemize}
The  categorical equivalence between spectral  and Priestley spaces given by the Stone-Priestley Duality Theorem yields the following.
\begin{itemize}
 \item The topology $\tau^\downarrow$ is precisely the collection of open downsets of the ordered space $(X,\tau^p,\leq)$.
\item The topology  $\tau^\uparrow$ is precisely the collection of open upsets of the ordered space $(X,\tau^p,\leq)$.
\item The  relation $\leq$ is a closed subset of the product space $(X,\tau^\uparrow) \times (X,\tau^\downarrow)$.
\end{itemize}
\noindent The Stone map (\ref{eq:stonemap}) can be extended from elements to filters and ideals of $D$:
\begin{align}\label{eq:stonemap2} 
F \subseteq D \text{ a filter } &\longmapsto \bigcap_{a \in F} \hat{a} = \{x \in X \ | \ F \subseteq F_x\} = \{x \in X \ | \ F \cap I_x = \emptyset\}, \\
\label{eq:stonemap3} 
I \subseteq D \text{ an ideal } &\longmapsto \bigcup_{a \in I} \hat{a} = \{x \in X \ | \ I \cap F_x \neq \emptyset\} = \{x \in X \ | \ I \not\subseteq I_x\}.
\end{align}
Under (\ref{eq:stonemap2}--\ref{eq:stonemap3}), filters and ideals of $D$     correspond  to  subsets of $X$ which can be characterized topologically, see  Table \ref{t:corr} below. This correspondence extends the one in Corollary \ref{cor:prdualobjects} upon identifying an element $a\in D$ with the principal ideal (or filter) of $D$ that it generates. We recall that a subset of a topological space is called {\em saturated} if it is an intersection of open sets, and {\em co-saturated} if it is a union of closed sets; also, we say \emph{co-compact set} to mean \emph{complement of a compact set}.\footnote{The term `co-compact' is often (e.g.\ in \cite{Jung2004, Lawson2010}) used to refer to `complement of a compact saturated set'; we will call such sets `co-compact co-saturated', to avoid possible confusion.}

\begin{table}[htp]
\begin{small}
\begin{center}
\begin{tabular}{|c|c|c|c|}
\hline
 $D$ & $(X,\tau^\downarrow)$ & $ (X,\tau^p,\leq)$ & $(X,\tau^\uparrow)$ \\ \hline\hline
element 			  &   compact open 		 & clopen downset  & co-compact closed \\ \hline
lattice filter $F$				  &  compact saturated       & closed downset  & closed \\ \hline
lattice ideal  $I$                          &   open                             & open downset   & co-compact co-saturated\\
\hline
\end{tabular}
\end{center}
\caption{Correspondence between lattice and space concepts}
\label{t:corr}
\end{small}
\end{table}

\vspace{-5mm}

Quotients of $D$ dually correspond to closed subspaces of $(X,\tau^p)$ by the Stone-Priestley Duality Theorem. We record details for later use.
\begin{prop}
\label{prop:quotientsubspace}
Let $D$ be a distributive lattice and let $(X,\tau^p,\leq)$ be its Priestley dual space. There is a contravariant Galois connection between the subsets of $D \times D$ and the subsets of $X$, given by
\begin{align*}
\left. \theta \subseteq D \times D \right. \ \ &\longmapsto \ \ \left. S_\theta := \left\{x \in X  \mid \forall (a,b) \in \theta \,\, (a \in I_x \liff b \in I_x)\right\}, \right. \\
\left. S \subseteq X \right. \ \ &\longmapsto \ \  \left. \theta_S := \left\{(a,b) \in D \times D  \mid  \forall x \in S \,\, (a \in I_x \liff b \in I_x)\right\}.\right.
\end{align*}
Moreover, for $\theta \subseteq D \times D$, we have $\theta = \theta_{S_\theta}$ if, and only if, $\theta$ is a distributive lattice congruence. Also, for $S \subseteq X$, we have $S = S_{\theta_S}$ if, and only if, $S$ is closed in $\tau^p$.
In particular, if $D \onto D/\theta$ is a  quotient of $D$, then the Priestley dual space of $D/\theta$ is isomorphic to $(S_\theta,\tau^{p},\leq)$. \end{prop}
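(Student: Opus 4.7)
My plan is to establish the four assertions in the order they appear. The Galois connection itself is routine: both maps are visibly inclusion-reversing, and the containments $\theta \subseteq \theta_{S_\theta}$ and $S \subseteq S_{\theta_S}$ are immediate from the definitions. So the real content lies in identifying the fixed points of the two composites, and then in the statement about quotients.

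For the congruence equivalence, first note that $\theta_S$ is always a lattice congruence, as it is the kernel of the evaluation homomorphism $D \to \{0,1\}^S$, $a \mapsto (x(a))_{x \in S}$; in particular, if $\theta = \theta_{S_\theta}$ then $\theta$ is a congruence. Conversely, assume $\theta$ is a congruence and $(a,b) \notin \theta$. Then $[a] \neq [b]$ in $D/\theta$, so after possibly swapping $a$ and $b$ the Prime Filter Theorem for distributive lattices applied to $D/\theta$ yields a prime filter $\bar F$ with $[a] \in \bar F$ and $[b] \notin \bar F$. Writing $\pi \colon D \onto D/\theta$ for the quotient map, the point $x \in X$ corresponding to $\pi^{-1}(\bar F)$ lies in $S_\theta$, since $x$ factors through $\pi$, and $a \in F_x$ but $b \notin F_x$. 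Hence $(a,b) \notin \theta_{S_\theta}$.

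For the topological equivalence, first observe that
\[
S_{\theta_S} \ = \ \bigcap_{(a,b) \in \theta_S} \bigl[(\hat a \cap \hat b) \cup (\hat a^c \cap \hat b^c)\bigr]
\]
is an intersection of $\tau^p$-clopens, hence $\tau^p$-closed. Conversely, suppose $S$ is $\tau^p$-closed and $y \notin S$. Using $\widehat{a \wedge a'} = \hat a \cap \hat{a'}$ and $\widehat{b \vee b'} = \hat b \cup \hat{b'}$, one sees that the family $\{\hat a \cap \hat b^c : a,b \in D\}$ is a basis for $\tau^p$, so we may pick $a,b \in D$ with $y \in \hat a \cap \hat b^c$ and $(\hat a \cap \hat b^c) \cap S = \emptyset$. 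I claim that the pair $(a \wedge b,\, a)$ belongs to $\theta_S \setminus \theta_{\{y\}}$, which suffices. Separation at $y$ is clear, since $a \notin I_y$ and $b \in I_y$ force $a \wedge b \in I_y$ and $a \notin I_y$. For $x \in S$, the condition $x \notin \hat a \cap \hat b^c$ says that $a \in I_x$ or $b \in F_x$; the implication $a \in I_x \Rightarrow a \wedge b \in I_x$ is automatic, and conversely if $a \wedge b \in I_x$ then primality of $I_x$ gives $a \in I_x$ or $b \in I_x$, and in the latter case $b \notin F_x$, so the disjunction above forces $a \in I_x$.

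The final clause on quotients follows from the Stone-Priestley Duality Theorem: a surjection $\pi \colon D \onto D/\theta$ corresponds dually to an injective continuous order-preserving map $X(D/\theta) \hookrightarrow X(D)$, and unfolding the definition of $F^p$ identifies its image with the set of $x \in X(D)$ that factor through $\pi$, which is exactly $S_\theta$ endowed with the inherited topology and order. I expect the main obstacle to be the ``if'' direction of the topological equivalence, specifically spotting the separating pair $(a \wedge b,\, a)$; once this idea is in hand, the remaining verifications are short.
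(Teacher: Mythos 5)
Your proof is correct. Note that the paper itself states Proposition~\ref{prop:quotientsubspace} without proof, recording it as a standard consequence of the Stone-Priestley Duality Theorem, so there is no argument of the authors' to compare against; your write-up is a valid self-contained verification. All the key steps check out: $\theta_S$ is the kernel of the evaluation homomorphism into $\{0,1\}^S$; the prime filter $\pi^{-1}(\bar F)$ does lie in $S_\theta$ because the corresponding point factors through $\pi$; the sets $\hat a \cap \hat b^{\,c}$ do form a basis for $\tau^p$ via $\widehat{a\wedge a'}=\hat a\cap\hat a'$ and $\hat b^{\,c}\cap\hat b'^{\,c}=(\widehat{b\vee b'})^{c}$; and the separating pair $(a\wedge b,\,a)$ works exactly as you argue, using primality of $I_x$. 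The only step you leave slightly implicit is that the dual of the surjection $\pi$ is a homeomorphism and order-isomorphism onto its image with the inherited structure; this follows in one line from the fact that $X(D/\theta)$ is compact and $X(D)$ Hausdorff, together with $I_{\bar x}\subseteq I_{\bar y}\iff\pi^{-1}(I_{\bar x})\subseteq\pi^{-1}(I_{\bar y})$ by surjectivity of $\pi$, so it is not a gap.
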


We now turn to stably compact spaces. A topological space is {\it locally compact} if, given $U$ open and $x \in U$, there exist $V$ open and $K$ compact such that $x \in V \subseteq K \subseteq U$. A space is {\it coherent} if the intersection of two compact saturated sets is again compact, and {\it well-filtered} if, whenever the intersection of a descending chain of compact saturated sets is contained in an open set, $U$, then some member of the chain is already contained in $U$. A space is {\it stably compact} if it  is $T_0$, compact, locally compact, coherent and well-filtered. In this definition, the property `well-filtered' may be equivalently replaced by `sober', cf.\ \cite[Rem. 2.4]{Lawson2010}.

\begin{exa}\label{exa:stabcompsp}
 Any compact Hausdorff space is stably compact.
({\it Proof}.
Compact Hausdorff spaces are locally compact \cite[Thm. 18.2]{Willard}, and their compact  and closed sets coincide \cite[Thm. 17.5]{Willard}. From this it follows easily  that compact Hausdorff spaces are coherent and well-filtered.)
Also, any spectral space is stably compact.
({\it Proof}.
Spectral spaces are compact, sober, and $T_0$ by definition. Since the compact open sets form a basis for the topology, spectral spaces are locally compact. Further, the compact saturated sets in a spectral space are exactly the intersections of compact open sets. From this, it follows that a spectral space is coherent.) Indeed, spectral spaces are precisely the stably compact spaces in which the compact open sets form a basis.
\end{exa}

If $(I,\rho)$ is a topological space, then the {\it co-compact dual topology} $\rho^\dual$ on $I$ is defined by taking as a basis for the open sets the complements of $\rho$-compact-saturated sets. For a stably compact topology $\rho$, the $\rho$-compact-saturated sets are closed under arbitrary intersections (see e.g.\  \cite[Lem. 2.8]{Jung2004}), from which it follows that the open sets in $\rho^\dual$ are {\it exactly} the complements of the $\rho$-compact-saturated sets. The {\it patch topology} $\rho^p$ is defined as the smallest topology containing both $\rho$ and $\rho^\dual$, that is, $\rho^p := \rho \vee \rho^\dual$.  Observe that, for a Priestley space $(X,\tau^p,\leq)$, $\tau^{\uparrow}=(\tau^{\downarrow})^{\dual}$, and $\tau^{p}=\tau^{\downarrow}\vee\tau^{\uparrow}$.
\begin{thm}\label{thm:stabcompsp}
Let $(I,\rho)$ be a stably compact space with specialization order $\leq$. The following hold.
\begin{enumerate}
\item The space $(I,\rho^\dual)$ is stably compact.
\item The order $\leq$ is a closed subset of the product space $(I,\rho^\dual) \times (I,\rho)$.
\item The open sets of $(I,\rho)$ are exactly the open downsets of the ordered space $(I,\rho^p,\leq)$.
\item The open sets of $(I,\rho^\dual)$ are exactly the open upsets of the ordered space $(I,\rho^p,\leq)$.
\item If the order $\leq$ is discrete, then $\rho^\dual = \rho^p = \rho$.
\end{enumerate}
\end{thm}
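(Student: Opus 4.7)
The plan is to prove the five items by exploiting the symmetric structure of stably compact spaces. A preliminary observation used throughout is: every $\rho$-closed set is $\rho^\dual$-compact-saturated. Indeed, a $\rho$-closed set $C$ is a $\leq_\rho$-downset, hence a $\leq_{\rho^\dual}$-upset, i.e.\ $\rho^\dual$-saturated; and any $\rho^\dual$-open cover $C \subseteq \bigcup_\alpha (I \setminus K_\alpha)$ with $K_\alpha$ $\rho$-compact-saturated translates into $\bigcap_\alpha K_\alpha \subseteq I \setminus C$, so the filtered family of finite sub-intersections (each $\rho$-compact-saturated by coherence) has intersection contained in the $\rho$-open set $I \setminus C$, and well-filteredness of $\rho$ yields a finite subcover. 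Throughout I interpret $\leq$ as the order for which $\rho$-opens are $\leq$-downsets (equivalently, the opposite of the $\rho$-specialization order in the paper's convention, matching the analogy $\rho \leftrightarrow \tau^\downarrow$ from Priestley duality).

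For item (2), let $x \not\leq y$. Then there exists $U \in \rho$ with $y \in U$ and $x \notin U$. Local compactness of $\rho$ supplies $V \in \rho$ and $K$ $\rho$-compact-saturated with $y \in V \subseteq K \subseteq U$; in particular $x \notin K$. Setting $N_x := I \setminus K \in \rho^\dual$ and $N_y := V \in \rho$ gives neighbourhoods of $x$ and $y$ respectively. For any $(x',y') \in N_x \times N_y$ with $x' \leq y'$: since $V$ is a $\leq$-downset (as a $\rho$-open), $x' \in V \subseteq K$, contradicting $x' \in I \setminus K$. Hence $(N_x \times N_y) \cap {\leq} = \emptyset$, proving that ${\leq}$ is closed in $(I,\rho^\dual) \times (I,\rho)$.

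Item (1) is the main technical content. To show $(I,\rho^\dual)$ is stably compact, one verifies the five defining properties. The $T_0$ property is immediate, as $\leq_{\rho^\dual}$ is a partial order. Compactness follows from well-filteredness of $\rho$: a $\rho^\dual$-open cover $I = \bigcup (I \setminus K_\alpha)$ gives $\bigcap K_\alpha = \emptyset$, and reducing to the filtered family of finite sub-intersections (compact-saturated by coherence) yields a finite empty sub-intersection. Coherence and sobriety of $\rho^\dual$ follow from analogous arguments. The central step is local compactness of $\rho^\dual$: given $x \in I \setminus K_0$ with $K_0$ $\rho$-compact-saturated, since $K_0$ is $\leq_\rho$-saturated and $x \notin K_0$ we have $K_0 \cap \overline{\{x\}} = \emptyset$, so $K_0 \subseteq I \setminus \overline{\{x\}}$, a $\rho$-open set. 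Applying the \emph{normality lemma} of stably compact spaces --- for any $\rho$-compact-saturated $K$ and $\rho$-open $W$ with $K \subseteq W$, there exist $V \in \rho$ and $K'$ $\rho$-compact-saturated with $K \subseteq V \subseteq K' \subseteq W$ --- to $K_0$ and $I \setminus \overline{\{x\}}$ yields $V, K'$ with $K_0 \subseteq V \subseteq K' \subseteq I \setminus \overline{\{x\}}$. Then $x \in I \setminus K' \subseteq I \setminus V \subseteq I \setminus K_0$; the set $I \setminus K'$ is $\rho^\dual$-open, and by the preliminary observation $I \setminus V$ (being $\rho$-closed) is $\rho^\dual$-compact-saturated.

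Items (3)--(5) complete the proof. The forward inclusion in (3) holds trivially since $\rho \subseteq \rho^p$ and $\rho$-opens are $\leq$-downsets by convention. The reverse inclusion --- that every $\rho^p$-open $\leq$-downset is $\rho$-open --- again uses the normality lemma: for each point $x$ in such a $U$ and a basic $\rho^p$-neighbourhood $V \cap (I \setminus K) \subseteq U$ of $x$, a careful interpolation combined with the downset property of $U$ produces a $\rho$-open subneighbourhood of $x$ inside $U$. Item (4) is strictly symmetric, and item (5) is immediate: when $\leq$ is discrete, every subset is both a $\leq$-downset and a $\leq$-upset, so $\rho^p = \rho = \rho^\dual$ by (3) and (4). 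The principal obstacle in this plan is local compactness of $\rho^\dual$ in item (1), which rests on the normality lemma --- a standard ``double interpolation'' property of stably compact spaces, derivable from coherence, local compactness, and well-filteredness.
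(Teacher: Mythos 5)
You should first note that the paper does not actually prove this theorem: items 1--4 are dispatched by citations to \cite{Jung2004}, and only the derivation of item 5 from items 3 and 4 is written out (exactly as you do it). So your proposal is necessarily a different route, and much of it is genuinely good. Your reading of $\leq$ as the order for which $\rho$-opens are downsets (the opposite of the $\rho$-specialization order as defined earlier in the paper) is the correct one --- it is forced by item 3 and by the way Proposition~\ref{prop:downofcompact} uses item 2. Your proof of item 2 is complete and correct, your preliminary observation is right, and your derivation of local compactness of $\rho^\dual$ from the normality/interpolation lemma is a clean and correct argument.

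There are, however, two genuine gaps. The smaller one is in item 1: coherence and well-filteredness of $\rho^\dual$ do not follow ``by analogous arguments'' from what you have written; they require the \emph{converse} of your preliminary observation, namely that every $\rho^\dual$-compact-saturated set is $\rho$-closed. (This is true and provable from local compactness of $\rho$ by essentially the same separation argument you use for item 2, but without it you cannot intersect two $\rho^\dual$-compact-saturated sets and know that the result is $\rho^\dual$-compact.) The serious gap is the reverse inclusion in items 3 and 4, which is the entire content of those items. The phrase ``a careful interpolation combined with the downset property of $U$'' is not an argument, and a purely local interpolation at a single basic $\rho^p$-neighbourhood of $x$ cannot work: to thicken the $\rho$-saturation of $\{x\}$ into a $\rho$-open set contained in $U$ you must separate $x$ from \emph{all} of $U^c$ simultaneously, and for that you need $U^c$ to be compact in a suitable topology. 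The standard missing ingredient is compactness of the patch topology $\rho^p$, provable via Alexander's subbase lemma from coherence and well-filteredness of $\rho$; with it, one covers the $\rho^p$-compact set $U^c$ by finitely many sets of the form $I\setminus K_y$ produced by local compactness of $\rho$ at $x$, and the corresponding finite intersection $\bigcap V_{y_i}$ is the desired $\rho$-open neighbourhood of $x$ inside $U$. Finally, item 4 is not ``strictly symmetric'' for free: it silently uses $(\rho^\dual)^\dual=\rho$, which again rests on the characterization of the $\rho^\dual$-compact-saturated sets as exactly the $\rho$-closed sets.
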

\begin{proof}
For item 1, see \cite[Cor. 2.13(i)]{Jung2004}. For item 2, see \cite[Lem. 2.2(ii)]{Jung2004}. For items 3 and 4, see \cite[Thm. 2.12]{Jung2004}. Item 5 follows immediately from items 3 and 4 and the definition of $\rho^p$.
\end{proof}
Note in particular that, for a stably compact space $(I,\rho)$, $(\rho^\dual)^\dual = \rho$; hence the term `stable'.
\begin{prop}\label{prop:downofcompact}
Let $(I,\rho)$ be a stably compact space with specialization order $\leq$. If $K \subseteq I$ is compact in the topology $\rho$, then ${\downarrow} K$ is closed in $(I,\rho^\dual)$.
\end{prop}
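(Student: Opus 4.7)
My plan is to show that every point $x \notin {\downarrow} K$ has a $\rho^\dual$-open neighborhood disjoint from ${\downarrow} K$, which gives that $I \setminus {\downarrow} K$ is $\rho^\dual$-open and hence that ${\downarrow} K$ is $\rho^\dual$-closed. The key tools are item~2 of Theorem~\ref{thm:stabcompsp} (the order $\leq$ is closed in $(I,\rho^\dual) \times (I,\rho)$) together with the hypothesis that $K$ is $\rho$-compact.

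In more detail, fix $x \notin {\downarrow} K$, so that $x \not\leq z$ for every $z \in K$. For each such $z$, the pair $(x,z)$ lies in the complement of $\leq$ in $(I,\rho^\dual) \times (I,\rho)$, which is open by item~2 of Theorem~\ref{thm:stabcompsp}. Hence there exist a $\rho^\dual$-open neighborhood $U_z$ of $x$ and a $\rho$-open neighborhood $V_z$ of $z$ such that $(U_z \times V_z) \cap {\leq} = \emptyset$, i.e.\ no element of $U_z$ is below any element of $V_z$. The family $\{V_z : z \in K\}$ is a $\rho$-open cover of $K$, so by $\rho$-compactness it admits a finite subcover $V_{z_1},\dots,V_{z_n}$. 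Set $U := U_{z_1} \cap \cdots \cap U_{z_n}$, which is a $\rho^\dual$-open neighborhood of $x$.

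It remains to verify that $U \cap {\downarrow} K = \emptyset$. If $y \in U \cap {\downarrow} K$, then $y \leq z$ for some $z \in K$. That $z$ lies in some $V_{z_i}$, and $y \in U \subseteq U_{z_i}$, contradicting $(U_{z_i} \times V_{z_i}) \cap {\leq} = \emptyset$. Hence $U \subseteq I \setminus {\downarrow} K$, finishing the proof.

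I do not anticipate a real obstacle here: the argument is a variant of the classical tube-lemma style proof that compactness plus a closed graph yields closedness of projections. The only substantive ingredient beyond basic topology is the closedness of $\leq$ in $(I,\rho^\dual) \times (I,\rho)$ for stably compact $\rho$, which is given to us by Theorem~\ref{thm:stabcompsp}. Note in particular that one does \emph{not} need any additional structure of ${\downarrow} K$ (such as saturation in $\rho$), nor does one need to inspect the specialization order of $\rho^\dual$ directly.
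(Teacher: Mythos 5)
Your proof is correct and uses exactly the same two ingredients as the paper's (closedness of $\leq$ in $(I,\rho^\dual)\times(I,\rho)$ from Theorem~\ref{thm:stabcompsp}.2, plus $\rho$-compactness of $K$); the paper simply packages the tube-lemma argument you write out as the standard fact that projection along a compact factor is a closed map, citing Munkres. No gap.
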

\begin{proof}
By Theorem~\ref{thm:stabcompsp}.2, $\leq$ is a closed subset of the product space $(I,\rho^\dual) \times (I,\rho)$, so the intersection of $\leq$ with the subspace $C:=I \times K$ is also closed. Because $(K,\rho)$ is compact, the projection map  $\pi \colon (I,\rho^\dual) \times (K,\rho) \to (I,\rho^\dual)$ is closed, by a standard topological fact (see e.g.\ \cite[Lem. 26.8, Exercise 26.7]{Munk2000}). Hence ${\downarrow} K = \pi[C]$ is closed in $(I,\rho^\dual)$.
\end{proof}
\begin{cor}\label{cor:upofcompact}
Let $(I,\rho)$ be a stably compact space with specialization order $\leq$. If $K \subseteq I$ is compact in the topology $\rho^\dual$, then ${\uparrow} K$ is closed in $(I,\rho)$.
\end{cor}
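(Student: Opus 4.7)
The plan is to deduce the corollary from Proposition~\ref{prop:downofcompact} by dualizing, using the stability of the class of stably compact spaces under the co-compact dual operation.

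First, I would invoke Theorem~\ref{thm:stabcompsp}(1) to conclude that $(I,\rho^\dual)$ is itself stably compact, and the remark following the theorem to note that $(\rho^\dual)^\dual = \rho$. Next, I need to identify the specialization order of $(I,\rho^\dual)$. By items (3) and (4) of Theorem~\ref{thm:stabcompsp}, the $\rho$-opens are exactly the downsets of $\leq$ that are $\rho^p$-open, while the $\rho^\dual$-opens are the upsets of $\leq$ that are $\rho^p$-open. Combining this with the characterization of the specialization order via open sets, one sees that the specialization order of $\rho^\dual$ is $\geq$, the opposite of the specialization order $\leq$ of $\rho$.

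Now I would apply Proposition~\ref{prop:downofcompact} directly to the stably compact space $(I,\rho^\dual)$ (whose specialization order is $\geq$) and the subset $K$, which is compact in $\rho^\dual$ by hypothesis. The conclusion of that proposition gives that the $\geq$-downward closure of $K$ — which is exactly ${\uparrow}_\leq K = {\uparrow} K$ — is closed in the co-compact dual of $(I,\rho^\dual)$, namely $(I,(\rho^\dual)^\dual) = (I,\rho)$. That is the desired statement.

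There is essentially no obstacle beyond being careful about which order is which; the only step requiring a brief verification is the identification of the specialization order of $\rho^\dual$ as $\geq$, which is a direct consequence of Theorem~\ref{thm:stabcompsp}.
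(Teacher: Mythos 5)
Your proposal is correct and follows exactly the paper's own argument: pass to the stably compact space $(I,\rho^\dual)$, whose specialization order is $\geq$ and whose co-compact dual is $\rho$, and apply Proposition~\ref{prop:downofcompact}. You merely spell out the identification of the specialization order of $\rho^\dual$ via Theorem~\ref{thm:stabcompsp}(3)--(4), which the paper asserts without comment.
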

\begin{proof}
By Theorem~\ref{thm:stabcompsp}.1, $(I,\rho^\dual)$ is  a stably compact space, too, with  specialization order $\geq$. Now apply Proposition~\ref{prop:downofcompact}.
\end{proof}
\section{Sheaf representations via duality}\label{sec:sheaf}
In this section we  show how to obtain a sheaf representation of an algebra with a distributive-lattice reduct via a decomposition of its dual space. The main result of this section, Theorem~\ref{thm:decompositiongivessheaf}, generalizes \cite[Thm. 3.7 and Thm. 3.12]{Ge1991}. In Section~\ref{sec:spectralproduct}, we will apply its Corollary \ref{cor:dlesheaf}  to the variety of MV-algebras.

We  recall some basic notions from sheaf theory; see e.g. \cite{Tennison75, MacLaneMoerdijk94} for background. Let $\V$ be a variety of algebras, and let $(I,\rho)$ be a topological space. Regarding $\rho$ as a category with arrows given by inclusions, consider a (presheaf) functor $F\colon \rho^{\rm op} \to \V$. As usual, we denote the image under $F$ of an inclusion $U \subseteq V$ in $\rho$ by $(-)|_U \colon F(V) \to F(U)$, and call it the {\it restriction map} from $V$ to $U$. Suppose further that $F$ is a sheaf such that the $\V$-algebra $A$ is isomorphic to the algebra of global sections of $F$. Recall that, for each $i \in I$, the {\it stalk} $A_i$ of $F$ at $i$ is  the direct limit (=filtered colimit) of the algebras $F(U)$, where $i \in U$. Concretely, $A_i$ is the quotient of $A$ by the congruence relation  given by
\begin{align*}
a \sim_i b \iff \exists U \subseteq I \text{ open such that } i \in U \text{ and } a|_U = b|_U.
\end{align*}
In particular, consider the case where $\V$ is the variety of distributive lattices, and $F$ is a sheaf whose algebra of global sections is $D$. Each of the stalk quotients $D\onto D_{i}$ corresponds to a closed subspace $X_i \embeds X$ of the Priestley dual space of $D$ (Proposition~\ref{prop:quotientsubspace}), namely,
\begin{align*} 
X_i := \{x \in X \ | \ \forall a,b \in D , \text{ if } a \sim_i b, \text{ then } x \in \hat{a} \liff x \in \hat{b}\}.
\end{align*}
Thus, the sheaf $F$ yields a collection $(X_i)_{i\in I}$ of closed subspaces  of $X$ indexed by $(I,\rho)$. 
Conversely, given a Priestley space $(X,\tau^p,\leq)$, we  seek conditions guaranteeing  that a collection  $(X_i)_{i \in I}$ of subspaces of $X$  indexed by $(I,\rho)$  yields a sheaf-theoretic representation of the distributive lattice $D$ dual to $(X,\tau^p,\leq)$. Theorem~\ref{thm:decompositiongivessheaf} below will provide such conditions in case $(I,\rho)$ is stably compact, and Corollary~\ref{cor:dlesheaf} will show that any additional operations on $D$ are  automatically preserved by the construction.

Let $(X,\tau^p,\leq)$ be a Priestley space and $D$ its dual distributive lattice. Suppose  $q \colon (X,\tau^p,\leq) \to (I,\rho)$ is a continuous surjection onto the stably compact space $(I,\rho)$, whose specialisation order we denote $\leq_\rho$.  With the aim of associating an \'etale space to $q$, we observe the following. By Corollary~\ref{cor:upofcompact},  ${\uparrow}i$ is closed for any $i \in I$,  whence $X_i := q^{-1}({\uparrow}i)$ is closed. Let $f_i \colon D \onto D_i$ be the quotient corresponding to $X_{i}$ via Proposition~\ref{prop:quotientsubspace}, so that the kernel of $f_i$ is the congruence $\theta_i$ defined by
\begin{align*} a \, \theta_i \, b \iff \hat{a} \cap X_i = \hat{b} \cap X_i.
\end{align*}
Now, for any $i, j \in I$, if $i \leq_\rho j$ then $X_j \subseteq X_i$ and $\theta_i \subseteq \theta_j$. (Indeed, $i \leq_\rho j$ entails ${\uparrow} j \subseteq {\uparrow} i$, so $X_j \subseteq X_i$. By Proposition~\ref{prop:quotientsubspace}, this is equivalent to $\theta_i \subseteq \theta_j$.)
Thus, whenever $i \leq j$, we have a quotient map $f_{i,j} \colon  D_i \onto D_j$ which corresponds under Priestley duality to the subspace inclusion $X_j \embeds X_i$, namely, the unique map such that $f_{i,j} \circ f_i = f_j$.

We now standardly  construct an \'etale space whose stalks are the distributive lattices $D_i$. Let $E$  denote the disjoint union of the sets $D_i$, for $i \in I$, and let $p \colon E \to I$ be the map $[a]_{\theta_i} \in D_i\mapsto i\in I$. Any $a \in D$ has an associated global section $s_a \colon I \to E$ of $p$ that acts by $i \in I \mapsto  [a]_{\theta_i}\in D_{i}$. Let $\sigma$ be the topology on $E$ generated by the subbasis $\{s_a(U) \ | \ a \in D, \ U \text{ open in } \rho^\dual\}$.
\begin{prop}\label{prop:pisetalemap}
For any $a \in D$, the section $s_a \colon (I,\rho^\dual) \to (E,\sigma)$ is continuous. In particular, $p \colon (E,\sigma)\to (I,\rho^\dual)$ is a local homeomorphism.
\end{prop}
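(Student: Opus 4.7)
The plan is to prove the continuity of $s_{a}$ first, and then to derive the ``in particular'' clause from it: once $s_{a}$ is continuous, and noting that $s_{a}(I)$ is a subbasic open in $\sigma$, one sees that each $s_{a}$ is a local trivialisation of $p$, so $p$ is automatically a local homeomorphism.

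To establish continuity of $s_{a} \colon (I,\rho^{\dual}) \to (E,\sigma)$, it suffices to pull back a subbasic open $s_{b}(U) \subseteq E$, where $b \in D$ and $U$ is open in $\rho^{\dual}$. Since the stalks $D_{i}$ of $E$ are pairwise disjoint, the equation $s_{a}(i) = s_{b}(j)$ forces $i = j$, and one computes
\begin{align*}
 s_{a}^{-1}(s_{b}(U)) \;=\; U \cap \{i \in I \mid a \, \theta_{i}\, b\} \;=\; U \cap \{i \in I \mid \hat{a} \cap X_{i} = \hat{b} \cap X_{i}\}.
\end{align*}
Writing $S := (\hat{a}\setminus\hat{b})\cup(\hat{b}\setminus\hat{a})$ for the symmetric difference, the condition $\hat{a}\cap X_{i} = \hat{b}\cap X_{i}$ becomes $S \cap q^{-1}({\uparrow} i) = \emptyset$, which is in turn equivalent to $i \notin {\downarrow}\, q(S)$ computed in $(I,\leq_{\rho})$. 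The pivotal observation is that $S$ is a Boolean combination of clopens in the Priestley topology $\tau^{p}$, hence $\tau^{p}$-compact; since $q \colon (X,\tau^{p}) \to (I,\rho)$ is continuous, $q(S)$ is $\rho$-compact, and Proposition~\ref{prop:downofcompact} gives that ${\downarrow}\, q(S)$ is closed in $(I,\rho^{\dual})$. Consequently $\{i \in I \mid a\,\theta_{i}\,b\} = I \setminus {\downarrow}\, q(S)$ is $\rho^{\dual}$-open, and so is $s_{a}^{-1}(s_{b}(U))$.

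For the local-homeomorphism statement, given any $e = s_{a}(i) \in E$, take $V := s_{a}(I)$; it is a subbasic $\sigma$-open (with $U = I \in \rho^{\dual}$) containing $e$. The identity $p \circ s_{a} = \id_{I}$ makes $p|_{V}$ a bijection onto $I$ with inverse $s_{a}$. The inverse is continuous by the first part, and $p|_{V}$ itself is continuous because $(p|_{V})^{-1}(U) = s_{a}(U)$ is a subbasic $\sigma$-open for every $U \in \rho^{\dual}$. Hence $p|_{V}$ is a homeomorphism onto the open set $I$, and $p$ is a local homeomorphism.

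The substantive step is the topological reformulation $\{i \mid a\,\theta_{i}\,b\} = I \setminus {\downarrow}\, q(S)$: after this identification, the stable compactness of $(I,\rho)$ via Proposition~\ref{prop:downofcompact} takes care of everything. This step also explains why the target topology must be the co-compact dual $\rho^{\dual}$ rather than $\rho$ itself, since downsets of compact sets are in general closed in the co-compact dual, not in $\rho$.
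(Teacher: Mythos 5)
Your proposal is correct and follows essentially the same route as the paper: pull back a subbasic open $s_b(U)$, identify $\{i \mid a\,\theta_i\,b\}$ with the complement of ${\downarrow}q[\hat{a}\bigtriangleup\hat{b}]$, and use the $\tau^p$-compactness of the symmetric difference together with Proposition~\ref{prop:downofcompact} to conclude openness in $\rho^\dual$. The local-homeomorphism argument via the sets $s_a(I)$ is likewise the paper's own.
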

\newcommand{\sd}{\bigtriangleup}
\begin{proof}
For the first statement, it suffices to show that the inverse image under $s_a$ of a subbasic element is open in $\rho^\dual$. To this end, let $b \in D$, $U \in \rho^\dual$, and consider the inverse image $s_a^{-1}(s_b(U))$. Writing $\sd$ to denote set-theoretic symmetric difference,  for any $i \in I$ we have:
\begin{align*}
i \not\in s_a^{-1}(s_b(U)) &\iff \text{if } i \in U, \text{ then } [a]_{\theta_i} \neq [b]_{\theta_i}, \\
&\iff \text{if } i \in U, \text{ then } \hat{a} \cap X_i \neq \hat{b} \cap X_i, \\
&\iff \text{if } i \in U, \text{ then } \exists x \in X_i \text{ such that } x \in \hat{a} \sd \hat{b}, \\
&\iff i \in U^c \cup {\downarrow}q[\hat{a} \sd \hat{b}].
\end{align*}
(For the last equivalence, recall that $x \in X_i$ iff $i \leq q(x)$ by definition.) Thus, $s_a^{-1}(s_b(U)) = U \cap ({\downarrow}q[\hat{a}\sd\hat{b}])^c$. We now show that $({\downarrow}q[\hat{a}\sd\hat{b}])^c$ is open in $(I,\rho^\dual)$.
The set $\hat{a} \sd \hat{b}$ is closed in $(X,\tau^p,\leq)$,  hence compact;  therefore, by continuity of $q$, the set $q[\hat{a} \sd \hat{b}]$ is compact in $(I,\rho)$. By Proposition~\ref{prop:downofcompact}, ${\downarrow}q[\hat{a} \sd \hat{b}]$ is closed in $(I,\rho^\dual)$, so its complement is open, as required.

To see that $p$ is a local homeomorphism, note first that $p$ is continuous: if $U \subseteq I$ is $\rho^\dual$-open, then $p^{-1}(U) = \bigcup_{a \in D} s_a(U)$, which is open in the topology $\sigma$. Moreover, for any $e \in E$ we have $e = [a]_{\theta_i}$ for some $i \in I$, $a \in D$. Then $s_a(I)$ is an open set around $e$, and $s_a$ is a continuous inverse to $p|_{s_a(I)}$.
\end{proof}
\begin{dfn}
For $q \colon (X,\tau^p,\leq) \onto (I,\rho)$ a continuous surjection from a Priestley space onto a stably compact space, the map $p \colon (E,\sigma) \onto (I,\rho^\dual)$ defined above is called the {\it \'etale space associated to $q$}.
\end{dfn}

Suppose $q$ and $p$ are as above, and let $\Gamma(E,p)$ denote the set of continuous global sections of $p \colon (E,\sigma) \onto (I,\rho^\dual)$. Viewing $\Gamma(E,p)$ as a subset of the direct  product $\prod_{i\in I}D_i$, it is a distributive lattice under the pointwise operations. We thus have a well-defined homomorphism $\eta \colon  D \to \Gamma(E,p)$ given by $\eta(a) := s_a$. Moreover, {\it the map $\eta$ is injective}: if $a, b \in D$ and $a \neq b$, then, by Priestley duality, there exists $x \in X$ such that $x$ is in exactly one of $\hat{a}$ and $\hat{b}$. Let $i := q(x)$. Then in particular $\hat{a} \cap X_i \neq \hat{b} \cap X_i$, so $f_i(a) \neq f_i(b)$. Hence, $s_a \neq s_b$, that is, $\eta(a) \neq \eta(b)$, as claimed. When the embedding $\eta \colon  D \embeds \Gamma(E,p)$ is moreover surjective, and therefore an isomorphism of distributive lattices, we say that $p \colon (E,\sigma) \onto (I,\rho^\dual)$ is  a {\it sheaf representation of the distributive lattice} $D$.
\begin{thm}\label{thm:decompositiongivessheaf}
Let $D$ be a distributive lattice with dual Priestley space $(X,\tau^p,\leq)$. Suppose that $q \colon  (X,\tau^p,\leq) \onto (I,\rho)$ is a continuous surjection onto a stably compact space which satisfies the following \textup{patching property}.
\begin{enumerate}
\item[\textup{(P)}] 
 Let $(U_l)_{l=1}^n$ any finite cover of $I$ by $\rho^\dual$-open sets, and let $(K_l)_{l=1}^n$  be any finite collection of clopen downsets of $X$ such that
\begin{align*}\tag{*}\label{label:star}
K_l \cap q^{-1}(U_l \cap U_m) = K_m \cap q^{-1}(U_l \cap U_m)
\end{align*}
holds for any $l,m \in \{1,\dots,n\}$. Then the set $\bigcup_{l=1}^n (K_l \cap q^{-1}(U_l))$ is a clopen downset in $X$.
\end{enumerate}

\noindent Then the associated \'etale space $p \colon  (E,\sigma) \onto (I,\rho^\dual)$ is a sheaf representation of $D$.
\end{thm}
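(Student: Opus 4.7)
The plan is to exploit the injective homomorphism $\eta \colon D \embeds \Gamma(E,p)$ that was already constructed before the theorem, and prove that it is surjective under the assumption (P). So let me fix a continuous global section $s \in \Gamma(E,p)$; the goal is to produce $a \in D$ with $\eta(a)=s$, i.e., with $[a]_{\theta_i} = s(i)$ for every $i \in I$.

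First I would cover $I$ by neighborhoods on which $s$ is locally of the form $s_{a_i}$. For each $i \in I$, pick any $a_i \in D$ with $s(i) = [a_i]_{\theta_i}$. Since the continuous sections $s$ and $s_{a_i}$ agree at $i$ and the étale topology $\sigma$ has the $s_a(U)$ as subbasic opens, there is a $\rho^\dual$-open neighborhood $U_i$ of $i$ on which $s = s_{a_i}$. Now by Theorem~\ref{thm:stabcompsp}.1 the space $(I,\rho^\dual)$ is itself stably compact, hence compact, so finitely many such $U_l$ cover $I$, with corresponding elements $a_l \in D$ for $l=1,\ldots,n$. For $i \in U_l \cap U_m$ we have $[a_l]_{\theta_i} = s(i) = [a_m]_{\theta_i}$, i.e., $\hat{a}_l \cap X_i = \hat{a}_m \cap X_i$. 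Taking $K_l := \hat{a}_l$, the compatibility condition \eqref{label:star} is now easy: for $x \in q^{-1}(U_l \cap U_m)$, set $i := q(x)$, observe $x \in X_i$, and use the just-established identity to conclude $x \in \hat{a}_l \liff x \in \hat{a}_m$.

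Now I apply the patching hypothesis (P) to get that $K := \bigcup_{l=1}^n \bigl(\hat{a}_l \cap q^{-1}(U_l)\bigr)$ is a clopen downset of $(X,\tau^p,\leq)$. By Priestley duality (Corollary~\ref{cor:prdualobjects}) there exists $a \in D$ with $\hat{a} = K$. It remains to verify $\eta(a)=s$, i.e., for each $i \in I$, $\hat{a} \cap X_i = \hat{a}_l \cap X_i$ for any chosen $l$ with $i \in U_l$. The forward inclusion is clear by the definition of $K$ after using that $U_l$, being $\rho^\dual$-open, is an upset for $\leq_\rho$ (Theorem~\ref{thm:stabcompsp}.4), which shows that any $x \in \hat{a}_l \cap X_i$ has $q(x) \geq_\rho i$ and hence $q(x) \in U_l$. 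For the reverse inclusion, suppose $x \in \hat{a} \cap X_i$, so $x \in \hat{a}_m \cap q^{-1}(U_m)$ for some $m$; setting $j := q(x)$, we have $j \geq_\rho i$, and again the upset property of $U_l$ in $\leq_\rho$ gives $j \in U_l \cap U_m$. Then $[a_l]_{\theta_j} = s(j) = [a_m]_{\theta_j}$, so $\hat{a}_l \cap X_j = \hat{a}_m \cap X_j$, and since $x \in \hat{a}_m \cap X_j$ we conclude $x \in \hat{a}_l$, as required.

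The main obstacle is the reverse inclusion in the last step: at a point $i \in U_l$, the representative for $s$ on an open neighborhood is $a_l$, but the constructed $a$ is built out of all of the $a_m$. The fix comes from the fact that the local data only matter on the stalks above $\uparrow i$, together with the observation — crucial and characteristic of the duality between $\rho$ and $\rho^\dual$ — that $\rho^\dual$-open sets are $\leq_\rho$-upsets, which forces the indices $m$ that can contribute to $\hat{a}$ at a point $x$ above $i$ to satisfy $i \in U_l \cap U_m$ as soon as $i \in U_l$. Once this is in hand, everything else reduces to a straightforward bookkeeping exercise using the quotient description of the stalks $D_i$ from Proposition~\ref{prop:quotientsubspace}.
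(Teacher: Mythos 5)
Your proposal is correct and follows essentially the same route as the paper's proof: reduce to surjectivity of $\eta$, cover $(I,\rho^\dual)$ by the compact family of opens $U_i = s^{-1}(s_{a_i}(I))$, verify the compatibility condition \eqref{label:star} for $K_l = \hat{a}_l$, invoke (P) and Priestley duality to get $a$, and check $\hat{a}\cap X_i = \hat{a}_l\cap X_i$ using that $\rho^\dual$-opens are $\leq_\rho$-upsets. The only cosmetic difference is in the final reverse inclusion, where you re-derive the agreement of $a_l$ and $a_m$ at $j=q(x)$ from the section $s$ rather than citing \eqref{label:star} directly; both arguments rest on the same fact.
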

\begin{proof}
As shown above, we only need to prove that $\eta$ is surjective. Let $s \in\Gamma(E,p)$ be a continuous section of $p$. For each $i \in I$, choose $a_i \in A$ such that $s(i) = f_i(a_i)$. Then each $U_i := s^{-1}(s_{a_i}(I))$ is $\rho^\dual$-open. Since $(I,\rho^\dual)$ is compact, there is a finite subcover $(U_{l})_{l=1}^{n}$ of $(U_{i})_{i\in I}$; write $a_1, \dots, a_n$ for the corresponding elements of $A$, and set $K_l := \hat{a}_l$ for $l = 1, \dots, n$. We now prove that $(U_l)_{l=1}^n$ and $(K_l)_{l=1}^n$ satisfy (\ref{label:star}). Indeed, if $x \in \hat{a_l} \cap q^{-1}(U_l \cap U_m)$, then $i := q(x) \in U_l$, so $s(i) = s_{a_l}(i)$, but also $s(i) = s_{a_m}(i)$. Therefore, $[a_l]_{\theta_i} = [a_m]_{\theta_i}$. In particular, $x \in \hat{a}_l \cap X_i = \hat{a}_m \cap X_i$, so $x \in \hat{a}_m$, and (\ref{label:star}) holds. Now, by property (P), the set $\bigcup_{l=1}^n (K_l \cap q^{-1}(U_l))$ is a clopen downset. By Priestley duality, pick $a \in A$ such that $\hat{a} = \bigcup_{l=1}^n (K_l \cap q^{-1}(U_l))$. We show that $s = s_a$. Pick $i \in I$, and $l \in \{1,\dots,n\}$ such that $i \in U_l$, so that $s(i) = [a_l]_{\theta_i}$. We now show that $\hat{a}_l \cap X_i = \hat{a} \cap X_i$, which is equivalent to $[a_l]_{\theta_i} = [a]_{\theta_i}$. Since $U_l$ is an upset, we have $X_i = q^{-1}({\uparrow}i) \subseteq q^{-1}(U_l)$, so $\hat{a}_l \cap X_i = K_l \cap X_i \subseteq K_l \cap q^{-1}(U_l) \subseteq \hat{a}$, and hence $\hat{a}_l \cap X_i \subseteq \hat{a} \cap X_i$. For the converse inclusion, if $x \in \hat{a} \cap X_i$, pick $m \in \{1,\dots,n\}$ such that $x \in K_m \cap q^{-1}(U_m)$. Since $x \in X_i$, we have $x \in q^{-1}(U_l)$. Then $x \in K_m \cap q^{-1}(U_l \cap U_m) = K_l \cap q^{-1}(U_l \cap U_m) \subseteq \hat{a}_l$. Thus we have  $\eta(a) = s_a = s$, as was to be shown. 
\end{proof}

By a {\it distributive lattice expansion} we mean an algebra $A$ in a signature containing $\vee, \wedge, 0, 1$, such that the reduct of $A$ in the signature $\vee,\wedge,0,1$ is a distributive lattice. 
\begin{cor}\label{cor:dlesheaf}
Let $\V$ be a variety of distributive lattice expansions, let $A$ be an algebra in $\V$ whose distributive lattice reduct has dual Priestley space $(X,\tau^p,\leq)$. Suppose that $q\colon  (X,\tau^p,\leq) \onto (I,\rho)$ satisfies the conditions of Theorem~\ref{thm:decompositiongivessheaf}, and, moreover, that each induced lattice quotient $A \onto A_i$ is a homomorphism of $\V$-algebras. Then $\Gamma(E,p)$ is a $\V$-subalgebra of $\prod_{i\in I} A_i$ which is isomorphic to $A$ as a $\V$-algebra.
\end{cor}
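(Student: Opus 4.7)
The plan is to apply Theorem~\ref{thm:decompositiongivessheaf} to the distributive lattice reduct $D$ of $A$, obtaining from it a lattice isomorphism $\eta \colon D \to \Gamma(E,p)$, and then to verify that this isomorphism automatically transports the $\V$-algebra structure. The hypothesis that each $f_i \colon A \onto A_i$ is a $\V$-homomorphism ensures that $A_i$ is a $\V$-algebra, so the direct product $\prod_{i\in I} A_i$ is a $\V$-algebra under componentwise operations; what has to be checked is that the subset $\Gamma(E,p)$ is closed under these operations and that $\eta$ respects them.

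The central computation is the following. Fix an $n$-ary operation symbol $g$ in the signature of $\V$ and elements $a_1, \dots, a_n \in A$. For each $i \in I$, since $f_i$ is a $\V$-homomorphism, we have
\[
s_{g(a_1,\dots,a_n)}(i) = f_i(g(a_1,\dots,a_n)) = g(f_i(a_1),\dots,f_i(a_n)) = g(s_{a_1}(i),\dots,s_{a_n}(i)).
\]
Thus the pointwise application $g(s_{a_1},\dots,s_{a_n})$, computed in $\prod_{i\in I} A_i$, coincides with the continuous section $s_{g(a_1,\dots,a_n)}$; in particular this pointwise application lands in $\Gamma(E,p)$.

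Now I would invoke the surjectivity of $\eta$ granted by Theorem~\ref{thm:decompositiongivessheaf}: every element of $\Gamma(E,p)$ is of the form $s_a$ for some $a \in A$. Combined with the displayed identity, this shows simultaneously that $\Gamma(E,p)$ is closed under every $\V$-operation (hence is a $\V$-subalgebra of $\prod_{i\in I} A_i$), and that $\eta$ preserves all $\V$-operations. Since $\eta$ is already a bijective lattice homomorphism, it is then an isomorphism of $\V$-algebras.

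I do not foresee any substantive obstacle: once Theorem~\ref{thm:decompositiongivessheaf} is in hand, the argument reduces to a single pointwise calculation. The only delicate point is the order of the steps: one needs the surjectivity of $\eta$ in order to rule out the existence of continuous sections outside the image of $\eta$, so that closure of $\Gamma(E,p)$ under the $\V$-operations follows from the fact that the operations behave correctly on sections of the form $s_a$.
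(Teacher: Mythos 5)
Your proposal is correct and follows essentially the same route as the paper's own proof: invoke Theorem~\ref{thm:decompositiongivessheaf} for the lattice isomorphism $\eta$, then use the hypothesis that each $A \onto A_i$ is a $\V$-homomorphism to see that $\eta$ preserves the additional operations and that $\Gamma(E,p)$ is a $\V$-subalgebra of the product. Your write-up merely spells out the pointwise computation and the role of surjectivity that the paper leaves implicit.
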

\begin{proof}
The function $\eta : A \to \Gamma(E,p)$ which sends $a$ to $s_a$ is a lattice isomorphism by Theorem~\ref{thm:decompositiongivessheaf}. Any additional operation on $A$ is preserved by $\eta$, by the assumption that each map $A \onto A_i$ is a $\V$-homomorphism. So $\eta$ is a $\V$-isomorphism. In particular, $\Gamma(E,p)$ is a $\V$-subalgebra of $\prod_{i \in I} A_i$.
\end{proof}
Under the hypotheses of the preceding corollary, we say that $p$ is a {\it sheaf representation of the algebra $A$}.
\section{MV-algebras and their spectral spaces}\label{ssec:mv}
In this section, we recall the basic definitions of MV-algebras and the different (lattice, prime, maximal) spectra that have been associated to them in the literature. We also describe the relationship between these spectra and the dual spaces of the underlying distributive lattice of the MV-algebra. We  use a minimal amount of MV-algebraic theory in this paper. Indeed, almost all results that we  need can be found in \cite[Chapt. 1]{cdm}. In particular, we neither assume Chang's completeness theorem \cite[Thm. 2.5.3]{cdm}, nor even the  easier subdirect representation theorem \cite[Thm. 1.3.3]{cdm}; the latter is a straightforward consequence of the first sheaf representation given in Section~\ref{sec:spectralproduct}. 

Background references for MV-algebras are \cite{cdm,Mun2011}. We recall that an \emph{MV-algebra} is an algebraic structure $(M,\oplus,\neg,0)$, where $0\in M$ 
is a constant, $\neg$ is a unary operation satisfying $\neg\neg x=x$, $\oplus$ is a binary operation making $(M,\oplus,0)$
a commutative monoid, the element $1$ defined as $\neg 0$ satisfies $x\oplus1=1$, and the law
\begin{align}\label{mvlaw}
 \neg(\neg x \oplus y)\oplus y = \neg(\neg y \oplus x)\oplus x 
\end{align}
holds.   MV-algebras form a variety of algebras by their very definition.

Any MV-algebra has an underlying structure of 
distributive lattice bounded below by $0$ and above by $1$, \cite[Prop. 1.1.5]{cdm}. Joins are defined as $x \vee y = \neg(\neg x \oplus  y)\oplus y$. 
Thus, the characteristic law (\ref{mvlaw}) states
that $x\vee y=y\vee x$.
Meets are defined by the De Morgan equation  $x \wedge y = \neg (\neg x \vee \neg y)$. It is common to call \emph{MV-chains} those MV-algebras whose underlying order is total.
For $m\geq 1$ an integer, and $x$ an element of an MV-algebra, we often abbreviate
\begin{align*}
mx:=\underset{m \text{ occurrences of } x}{\underbrace{x\oplus\cdots\oplus x}\,.}
\end{align*}

We set, as usual,
$x\ominus y:= \neg (\neg x \oplus y)$. 
Boolean algebras are precisely those MV-algebras that are idempotent, meaning that $x\oplus x = x$ holds; equivalently, they are the MV-algebras that satisfy the {\it tertium non datur} law $x\vee\neg x=1$. For Boolean algebras, $\oplus$ is $\vee$, and $\ominus$ is the operation of logical difference, $x\wedge \neg y$.

The real unit interval  $[0,1]\subseteq \R$ can be made into an MV-algebra with neutral element $0$
by defining $x\oplus y := \min{\{x+y,1\}}$ and $\neg x:=1-x$. The underlying lattice order of this MV-algebra coincides with 
the natural order of $[0,1]$. Thus, in this example, $\oplus$ can be thought of as `truncated addition', and $x\ominus y$ as `truncated subtraction', i.e.\ 
$x\ominus y=\max{\{x-y,0\}}$. The example is generic by \emph{Chang's Completeness Theorem}:
The variety of MV-algebras is generated\footnote{Hence the subalgebra $[0,1]\cap\Q$ also generates the variety of MV-algebras: if an evaluation into $[0,1]$ makes two terms
unequal, then by the continuity of the MV-algebraic operations on $[0,1]$ there also is an evaluation into $[0,1]\cap\Q$ that makes those two terms unequal.} by the standard MV-algebra $[0,1]$.  Chang's original proof is in \cite[Lem. 8]{Chang59}; for a textbook treatment, see \cite[Thm. 2.5.3]{cdm}. As emphasized in the Introduction, however, our results are obtained independently of  Chang's theorem. Indeed, in the remainder of this subsection we collect all of the simple facts about MV-algebras that we shall use in the sequel.
Throughout this section we let $A$ denote an MV-algebra. 
An \emph{MV-ideal} of an MV-algebra $A$ is a subset $I\subseteq A$ that is a submonoid (i.e.\ contains $0$ and is closed under $\oplus$) and a downset (i.e.\ contains
$b\in A$ whenever it contains $a \in A$ and $b\leq a$). 
The MV-ideals of $A$ are in natural bijection with the congruences on $A$ by \cite[Prop. 1.2.6]{cdm}. 
We    write  $A/I$ to denote the quotient of the MV-algebra $A$ modulo the ideal $I$.

An MV-ideal $I\subseteq A$ is \emph{prime} if it is proper (i.e.\ $I \neq A$), and for each $a,b\in A$ either $(a\ominus b) \in I$, or $(b\ominus a) \in I$.\footnote{The terminology ``prime MV-ideal'' is consistent with the terminology ``prime (lattice) ideal'' of Section~\ref{s:stone-priestley}, cf. Proposition~\ref{prop:consistency} below.} An MV-ideal $I\subseteq A$ is \emph{maximal} if it is proper, and the only MV-ideal that properly contains $I$ is $A$ itself. The quotient $A/I$ is then totally ordered by elementary universal algebra;\footnote{It is specific to MV-algebras, on the other hand, that $A/I$  admits a unique embedding  into the standard MV-algebra $[0,1]$ precisely when $I$ is maximal \cite[Thm. 6.1.1]{cdm}.} hence, by Proposition \ref{prop:primechain}, maximal MV-ideals are prime. 
 Each MV-ideal of $A$ is a lattice ideal of (the underlying lattice of) $A$; cf.\ \cite[Prop. 4.13]{Mun2011}. Although the converse fails, we have:
\begin{prop}[{\cite[Lem. 6.1.1]{cdm}}]\label{prop:consistency}Let $I$ be an MV-ideal of an MV-algebra $A$. Then $I$ is a prime ideal of the underlying lattice of $A$ if, and only if, $I$ is a prime MV-ideal.
\end{prop}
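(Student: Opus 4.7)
The plan is to reduce the equivalence to two standard MV-algebraic identities that compare the lattice operation $\wedge$ with the ``truncated subtraction'' $\ominus$, namely
\[
a = (a \wedge b) \oplus (a \ominus b) \qquad \text{and} \qquad (a \ominus b) \wedge (b \ominus a) = 0.
\]
Both can be checked directly from the axioms using the definitions $x \vee y = (x \ominus y) \oplus y$, $x \wedge y = \neg(\neg x \vee \neg y)$, and $x \ominus y = \neg(\neg x \oplus y)$; for instance, the first identity is essentially the statement that $a \vee b = (a \ominus b) \oplus b$ rewritten after swapping roles, and the second expresses that $\ominus$ is the ``positive part'' of a formal difference. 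I would quote these as elementary facts (they are in \cite[Chapt.~1]{cdm}) rather than prove them from scratch. Note also that both notions of primeness require $I \neq A$, and this properness condition is the same on both sides of the claimed equivalence.

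For the implication from prime lattice ideal to prime MV-ideal, I would argue as follows. Fix $a,b \in A$. The second identity gives $(a \ominus b) \wedge (b \ominus a) = 0 \in I$. Since $I$ is assumed to be a prime ideal of the underlying lattice, either $a \ominus b \in I$ or $b \ominus a \in I$, which is exactly the condition for $I$ to be a prime MV-ideal.

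For the converse, assume $I$ is a prime MV-ideal, and suppose $a \wedge b \in I$. By primeness of the MV-ideal, either $a \ominus b \in I$ or $b \ominus a \in I$. In the first case the first identity gives
\[
a = (a \wedge b) \oplus (a \ominus b),
\]
and since $I$ is an MV-ideal, hence closed under $\oplus$ and containing both summands, we conclude $a \in I$. The symmetric argument in the second case yields $b \in I$. Together with the fact that $I$ is already known to be a lattice ideal (as recalled just before the proposition), this shows $I$ is a prime lattice ideal.

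The only potential obstacle is the verification of the two MV-algebraic identities, but these are routine manipulations from the axioms and are well documented in \cite{cdm}; the rest of the argument is a direct application of the relevant definitions of primeness.
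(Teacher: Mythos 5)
Your proof is correct. The paper itself gives no argument here --- it simply cites \cite[Lem.~6.1.1]{cdm} --- and your derivation is essentially the standard textbook proof of that lemma: the identity $(a\ominus b)\wedge(b\ominus a)=0$ handles the direction from prime lattice ideal to prime MV-ideal, and the identity $a=(a\wedge b)\oplus(a\ominus b)$ (which follows from $a\wedge b = a\ominus(a\ominus b)$ together with $x\vee y=(x\ominus y)\oplus y$ and $a\ominus b\leq a$) handles the converse, with properness matching on both sides as you note.
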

\begin{prop}[{\cite[Lem. 1.2.3(v)]{cdm}}]\label{prop:primechain} If $p\colon A \to B$ is an onto homomorphism of MV-algebras, then $B$ is totally ordered and non-trivial \textup{(}i.e.\ not a singleton\textup{)} if, and only if, the  MV-ideal $p^{-1}(0)$ is prime. 
\end{prop}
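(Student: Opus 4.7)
The plan is to prove this directly by unwinding the definitions, using the standard characterization of the lattice order of an MV-algebra in terms of $\ominus$. Specifically, in any MV-algebra one has
\begin{align*}
x\leq y \iff x\ominus y = 0,
\end{align*}
which follows immediately from $x\vee y = \neg(\neg x \oplus y)\oplus y$ and the definition $x\ominus y = \neg(\neg x\oplus y)$: indeed $x\leq y$ iff $x\vee y = y$ iff $\neg(\neg x\oplus y)\oplus y = y$, and unpacking using $z\oplus y = y$ iff $z\leq y$ together with $\neg(\neg x\oplus y)\leq y$ automatically holding, this is equivalent to $\neg(\neg x\oplus y) = 0$, i.e.\ $x\ominus y = 0$. (I would cite \cite[Lem. 1.1.2]{cdm} here if needed, since the excerpt already gestures at these basic computations.) Set $I:=p^{-1}(0)$.

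First, I would observe that $I$ is proper if, and only if, $B$ is non-trivial, since $I=A$ exactly when $p$ is the constant map with value $0$, which, as $p$ is onto, is equivalent to $B=\{0\}$.

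Next, for the forward direction, suppose $B$ is totally ordered (and non-trivial, so $I$ is proper). Given $a,b\in A$, apply totality in $B$ to $p(a),p(b)$: either $p(a)\leq p(b)$ or $p(b)\leq p(a)$. By the characterization above and the fact that $p$ is an MV-homomorphism (so $p(a\ominus b)=p(a)\ominus p(b)$), this means $a\ominus b\in I$ or $b\ominus a\in I$. Hence $I$ is a prime MV-ideal.

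For the converse, suppose $I$ is prime (in particular proper, so $B$ is non-trivial). Given $x,y\in B$, use surjectivity of $p$ to pick $a,b\in A$ with $p(a)=x$, $p(b)=y$. Primality yields $a\ominus b\in I$ or $b\ominus a\in I$, and applying $p$ gives $x\ominus y=0$ or $y\ominus x=0$, i.e.\ $x\leq y$ or $y\leq x$. Thus $B$ is totally ordered. There is no serious obstacle; the only non-trivial ingredient is the equivalence $x\leq y\liff x\ominus y=0$, and this is an elementary fact from the basic calculus of MV-algebras listed in \cite[Chapt. 1]{cdm}.
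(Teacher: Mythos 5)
Your overall argument is correct and is essentially the standard one; note that the paper itself offers no proof of this proposition at all --- it simply cites \cite[Lem.\ 1.2.3(v)]{cdm} --- so there is nothing to compare against except that reference, and your reduction of the statement to the single fact $x\leq y \iff x\ominus y=0$ (via surjectivity and the fact that $p$ preserves the derived operation $\ominus$) is exactly how that lemma is proved there. The properness/non-triviality bookkeeping is also right.

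The one genuine problem is your parenthetical attempt to derive $x\leq y \iff x\ominus y=0$ from scratch. Both auxiliary claims you invoke are false as stated: ``$z\oplus y=y$ iff $z\leq y$'' fails in the direction $\Leftarrow$ (in $[0,1]$, $0.3\leq 0.5$ but $0.3\oplus 0.5=0.8$), and ``$\neg(\neg x\oplus y)\leq y$ automatically holds'' also fails (take $x=1$, $y=0.3$ in $[0,1]$, so that $x\ominus y=0.7\nleq 0.3$). Moreover, the implication $(x\ominus y)\oplus y=y \Rightarrow x\ominus y=0$ cannot be obtained by cancelling $y$: $z\oplus y=y$ does not force $z=0$ (take $y=1$). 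The equivalence you need is true, but its proof genuinely uses the commutativity axiom (\ref{mvlaw}) and is precisely \cite[Lem.\ 1.1.2]{cdm}; since you explicitly offer to cite that lemma, the proof goes through --- just delete the faulty inline derivation and rely on the citation, which is no worse than what the paper does for the proposition itself.
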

We write $\langle S \rangle$ to denote the MV-ideal generated by the subset $S\subseteq A$, namely, the intersection of all MV-ideals of $A$ containing $S$.  When $S=\{s\}$ is a singleton we write $\langle s \rangle$ instead of $\langle \{s\}\rangle$, and speak of  \emph{principal MV-ideals}.
\begin{prop}\label{prop:idealgenerated} For any non-empty subset $S$ of an MV-algebra $A$, we have
\begin{align*}
\langle S \rangle = \{a \in A \mid a \leq s_1 \oplus \cdots \oplus s _{k}, \text{ for some finite set  } \{s_{i}\}_{i=1}^{k} \subseteq S \}\,.
\end{align*}
Further, for any $a,b,c\in A$ we have  $\langle a\rangle \cap \langle b\rangle = \langle a \wedge b\rangle$, and  $\langle a \rangle \vee \langle b\rangle  := \left\langle \,\langle a \rangle \cup \langle b \right\rangle \, \rangle = \langle \{ a, b \} \rangle= \langle a \oplus b\rangle = \langle a \vee b\rangle$. 
In particular, finitely generated and principal MV-ideals coincide, and 
the principal MV-ideals of $A$ form a sublattice of the lattice of all MV-ideals of $A$.
\end{prop}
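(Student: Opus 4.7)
The plan is to first establish the explicit description of $\langle S \rangle$ and then use it to derive the algebraic relations among principal MV-ideals.

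\textbf{Step 1: Characterization of $\langle S \rangle$.} Let $T$ denote the set on the right-hand side of the claimed equality. I would first verify that $T$ is an MV-ideal containing $S$. Clearly $0 \in T$ (take the empty sum, or any single element $s$ with $0 \leq s$), and every $s \in S$ lies in $T$ because $s \leq s$. Downward closure of $T$ is immediate by transitivity of $\leq$. To show $T$ is closed under $\oplus$, use monotonicity of $\oplus$ (which follows from $x \leq y \Leftrightarrow \neg y \oplus x \leq \neg y \oplus y = 1$ together with basic MV identities) to conclude that if $a \leq s_1 \oplus \cdots \oplus s_k$ and $a' \leq s'_1 \oplus \cdots \oplus s'_m$, then $a \oplus a' \leq s_1 \oplus \cdots \oplus s_k \oplus s'_1 \oplus \cdots \oplus s'_m$. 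Hence $\langle S \rangle \subseteq T$. Conversely, any MV-ideal containing $S$ must contain every finite $\oplus$-sum of its elements and every element below such a sum, so $T \subseteq \langle S \rangle$.

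\textbf{Step 2: $\langle a \rangle \cap \langle b \rangle = \langle a \wedge b \rangle$.} The inclusion $\supseteq$ is trivial since $a \wedge b \leq a, b$. For $\subseteq$, Step 1 tells us that $c \in \langle a \rangle \cap \langle b \rangle$ iff $c \leq na$ and $c \leq mb$ for some $n, m \geq 1$; replacing both by $k := \max(n,m)$ we have $c \leq ka$ and $c \leq kb$. The key step is the MV-algebraic identity $k(a \wedge b) = (ka) \wedge (kb)$, which then yields $c \leq k(a \wedge b)$, i.e. $c \in \langle a \wedge b \rangle$. I would prove this identity by induction on $k$, using the distributivity of $\oplus$ over $\wedge$, namely $(x \oplus z) \wedge (y \oplus z) = (x \wedge y) \oplus z$ (a standard consequence of the MV axioms, cf.\ \cite[Prop. 1.1.6]{cdm}), together with the easily verified fact that $(k+1)a \wedge (k+1)b \leq a \oplus kb$ (e.g.\ when $a \leq b$, both sides reduce comparably).

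\textbf{Step 3: The join identities.} By definition $\langle a \rangle \vee \langle b \rangle = \langle \langle a\rangle \cup \langle b\rangle\rangle$, and since $\langle a \rangle \cup \langle b \rangle$ is sandwiched between $\{a,b\}$ and $\langle \{a,b\}\rangle$, we get $\langle a\rangle \vee \langle b\rangle = \langle \{a,b\}\rangle$. For the remaining identities I would close a chain of inclusions: from $a \vee b \leq a \oplus b$ (a basic MV fact) we have $\langle a \vee b\rangle \subseteq \langle a \oplus b\rangle$; from $a \oplus b \in \langle\{a,b\}\rangle$ we have $\langle a \oplus b\rangle \subseteq \langle\{a,b\}\rangle$; from $a,b \leq a \vee b$ we have $\langle\{a,b\}\rangle \subseteq \langle a \vee b\rangle$. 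Composing yields equality throughout. The ``in particular'' statement then follows: every finitely generated ideal is of the form $\langle a_1 \vee \cdots \vee a_n\rangle$, hence principal, and the principal ideals are closed under $\cap$ and $\vee$ by the two identities just proved.

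\textbf{Main obstacle.} The nontrivial ingredient is the identity $k(a\wedge b) = ka \wedge kb$ in Step 2. In $\ell$-groups this is free, since multiplication by $k$ is a lattice homomorphism; in MV-algebras the operation $ka$ is truncated at $1$, so one needs a short syntactic argument using the distributivity of $\oplus$ over $\wedge$ and induction, rather than appealing to the Mundici equivalence or Chang's completeness theorem (which the paper explicitly avoids at this point).
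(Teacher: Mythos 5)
Your overall structure is sound and, if anything, more detailed than the paper's own proof, which simply cites \cite[Lem. 1.2.1]{cdm} for the description of $\langle S\rangle$ and declares the lattice identities a ``straightforward verification''. Steps 1 and 3 are complete: the chain of inclusions $\langle a\vee b\rangle\subseteq\langle a\oplus b\rangle\subseteq\langle\{a,b\}\rangle\subseteq\langle a\vee b\rangle$ closes correctly, and the reduction of finitely generated ideals to principal ones follows. You have also correctly identified that the only nontrivial point is the identity $k(a\wedge b)=ka\wedge kb$ in Step 2.

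The gap is precisely the sub-claim $(k+1)a\wedge(k+1)b\leq a\oplus kb$, which you label ``easily verified'' and justify only by the remark that ``when $a\leq b$, both sides reduce comparably''. That is a verification on chains, and the paper explicitly forbids appealing to the subdirect representation theorem (or Chang's completeness) at this stage, so a chain-case check proves nothing about a general MV-algebra. The inequality is nonetheless provable with the tools of Lemma~\ref{lem:minusplusadjunction}: from $a\leq b\oplus(a\ominus b)$ and $b\leq a\oplus(b\ominus a)$ (item 1) and monotonicity one gets $(k+1)a\leq a\oplus kb\oplus k(a\ominus b)$ and $(k+1)b\leq a\oplus kb\oplus(b\ominus a)$; taking the meet and using $(z\oplus u)\wedge(z\oplus v)=z\oplus(u\wedge v)$ (item 2) reduces the claim to $k(a\ominus b)\wedge(b\ominus a)=0$. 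This last fact follows from $(a\ominus b)\wedge(b\ominus a)=0$ (item 6) by a second induction: if $ku\wedge v=0$, then $(k+1)u\wedge v\leq(ku\oplus u)\wedge(ku\oplus v)=ku\oplus(u\wedge v)=ku$, and since also $(k+1)u\wedge v\leq v$, we get $(k+1)u\wedge v\leq ku\wedge v=0$. With this inserted, your induction for $k(a\wedge b)=ka\wedge kb$ goes through and the whole argument is complete.
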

\begin{proof}The first assertion is {\cite[Lem. 1.2.1]{cdm}}, and the remaining ones follow from it through a straightforward verification.
\end{proof}

We now recall a few basic facts about the interaction between the MV-algebraic operations and the order.
\begin{lem}\label{lem:minusplusadjunction} In the following,  $a$, $b$, and $c$ are arbitrary elements of the MV-algebra $A$.
\begin{enumerate}
\item The operation $\ominus$ is lower adjoint to $\oplus$, i.e.\ $a\ominus b \leq c \,\Longleftrightarrow \, a\leq b\oplus c$.\, 
\item The operation $\oplus$ preserves all existing meets in each coordinate. That is, for any set $B\subseteq A$
such that $\bigwedge B$ exists, $\bigwedge_{b \in B} (b\oplus a)$ also exists, and we have $\bigwedge_{b \in B}(b\oplus a) = (\bigwedge B)\oplus a$.
Similarly for the second coordinate. In particular, $\oplus$ is order-preserving in each coordinate.
\item The operation $\ominus$ preserves all existing joins in the first coordinate, and reverses all existing joins to meets in the second coordinate. The latter means that for any set $B\subseteq A$
such that $\bigvee B$ exists, $\bigwedge_{b \in B}(a\ominus b)$ exists and we have $a \ominus (\bigvee_{b \in B} b)=\bigwedge_{b \in B}(a\ominus b)$. In particular, $\ominus$ is order-preserving in the first, and order-reversing in the second coordinate.

\item The operation $\oplus$  is join-preserving in the second coordinate, i.e.\  $a \oplus (b\vee c)=(a\oplus b)\vee (a\oplus c)$. Hence, by commutativity, $\oplus$ is join-preserving in each coordinate.
\item The operation $\ominus$  is meet-preserving in the first coordinate, i.e.\  $(a \wedge b)\ominus c=(a\ominus c)\wedge (b\ominus c)$, and 
   meet-reversing in the second coordinate, i.e.\  $a \ominus (b\wedge c)=(a\ominus b)\vee (a\ominus c)$.
\item The equation $(a \ominus b) \wedge (b \ominus a)  = 0$ holds.
\item The equation\footnote{This is the De Morgan-Kleene equation, see \cite[Chapt. XI]{BD1974}.}  $(a\wedge\neg a) \vee (b\vee\neg b)  = b\vee\neg b$ holds.
\item The map $\neg \colon A \to A$ is an order-reversing bijection that is its own inverse.
\end{enumerate}
\end{lem}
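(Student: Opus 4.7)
The plan is to prove (1) and (8) as foundational items, then derive the remaining items via the adjunction (1), the involution $\neg$ from (8), and the defining equation $a \ominus b = \neg(\neg a \oplus b)$. Item (8) is immediate: $\neg \neg x = x$ is an axiom, and combining $a \leq b \iff a \vee b = b$ with $\neg(a \vee b) = \neg a \wedge \neg b$ (from the definition of $\wedge$) yields $\neg b \leq \neg a$. Item (1), standard MV-algebraic residuation, is proved by elementary manipulation of the Chang axioms, cf.\ \cite[Lem.\ 1.1.2]{cdm}; this is the only step requiring genuine algebraic work from the axioms.

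From (1) alone, general Galois-connection reasoning delivers (2) (the right adjoint $b \oplus (-)$ preserves meets) and the first half of (3) (the left adjoint $(-) \ominus b$ preserves joins); commutativity of $\oplus$ extends (2) to both coordinates. For (4), the inequality $a \oplus (b \vee c) \geq (a \oplus b) \vee (a \oplus c)$ is monotonicity; for the reverse, rewrite both sides using $x \vee y = (x \ominus y) \oplus y$ and reduce via (1) to $a \oplus b \leq a \oplus (b \vee c)$, which is clear. For (5) first part, the nontrivial direction $(a \ominus c) \wedge (b \ominus c) \leq (a \wedge b) \ominus c$ follows because, by (2), $[(a \ominus c) \wedge (b \ominus c)] \oplus c = (a \vee c) \wedge (b \vee c) \geq a \wedge b$, after which (1) applies; the same argument extends verbatim to arbitrary meets.

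The remaining items hinge on the identity $a \ominus b = \neg b \ominus \neg a$, immediate from the definition of $\ominus$ and (8). For (3) second part:
\begin{align*}
a \ominus \Big(\bigvee_{b\in B} b\Big) = \neg\Big(\bigvee_{b\in B} b\Big) \ominus \neg a = \Big(\bigwedge_{b\in B} \neg b\Big) \ominus \neg a = \bigwedge_{b\in B} (\neg b \ominus \neg a) = \bigwedge_{b\in B} (a \ominus b),
\end{align*}
using the identity, (8), the arbitrary-meet form of (5) first, and the identity again. Item (5) second part is dual, using (3) first. For (6), observe that (1) together with $b \leq b \oplus x$ gives $(a \vee b) \ominus b = a \ominus b$, whence $(a \ominus b) \wedge (b \ominus a) = [(a \vee b) \ominus b] \wedge [(a \vee b) \ominus a] = (a \vee b) \ominus (a \vee b) = 0$ by (3) second. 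Finally, for (7), expanding $(a \wedge \neg a) \ominus (b \vee \neg b)$ via (3) second and (5) first yields the meet of the four elements $a \ominus b$, $\neg a \ominus b$, $a \ominus \neg b$, $\neg a \ominus \neg b$, which is bounded above by $(a \ominus b) \wedge (\neg a \ominus \neg b) = (a \ominus b) \wedge (b \ominus a) = 0$ by the identity and (6).

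The main obstacle is (1): once residuation is in hand, all other items fall out by formal manipulation of adjoints, the dualizing involution $\neg$, and a couple of routine MV-algebraic rewrites such as $x \vee y = (x \ominus y) \oplus y$.
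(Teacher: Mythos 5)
Your treatment of (1), (2), (3)(first half), (5)(second half), (6), (7) and (8) is essentially sound --- indeed your derivation of (6) from $(a\vee b)\ominus b=a\ominus b$ and join-reversal is a nice alternative to the paper's citation of \cite[Prop.~1.1.7]{cdm}, and your (7) really only needs monotonicity of $\ominus$ together with (6), much as in the paper's own argument. The genuine gaps are in items (4) and (5)(first half), which are exactly the two directions that do \emph{not} follow from residuation: by (1) the map $c\mapsto b\oplus c$ is a \emph{right} adjoint, so it preserves meets (item 2) but has no formal reason to preserve joins, and dually $a\mapsto a\ominus c$ is a \emph{left} adjoint, so it preserves joins but not meets. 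In (5) you establish $\bigl[(a\ominus c)\wedge(b\ominus c)\bigr]\oplus c\geq a\wedge b$ and say ``(1) applies''; but (1) applied to $a\wedge b\leq c\oplus w$ yields $(a\wedge b)\ominus c\leq w$, which is the \emph{trivial} direction (already immediate from monotonicity). The inference you actually need, ``$w\oplus c\geq u$ implies $w\leq u\ominus c$'', is false (take $u=0$, $c=1$, $w=1$). In (4), rewriting the right-hand side as $\bigl((a\oplus b)\ominus(a\oplus c)\bigr)\oplus(a\oplus c)$ and applying (1) reduces the goal to $\bigl(a\oplus(b\vee c)\bigr)\ominus(a\oplus c)\leq(a\oplus b)\ominus(a\oplus c)$, not to the monotonicity statement $a\oplus b\leq a\oplus(b\vee c)$; and the natural way to finish, via $b\ominus c\leq(a\oplus b)\ominus(a\oplus c)$, fails in $[0,1]$ for $a=b=1$, $c=\tfrac12$.

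These two identities (together with the second half of (3), which you route through the broken (5) and which is likewise not a formal Galois-connection fact --- it fails, for instance, for the co-Heyting difference on closed subsets of $\R$, which does satisfy the adjunction (1) with $\oplus=\cup$) are precisely the ``double operator'' properties the paper relies on throughout, and they require a genuinely MV-algebraic argument; this is why the paper simply cites \cite[Prop.~1.1.6]{cdm}, \emph{mutatis mutandis}, for items 4--5 rather than deriving them formally. So your closing claim that (1) is the only step needing real algebraic work is incorrect, and the dependency chain $(5)\Rightarrow(3)(\text{second half})\Rightarrow(6)\Rightarrow(7)$ needs to be repaired at its source.
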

\begin{proof} Item 1 is \cite[Lem. 1.1.4(iii)]{cdm}. Items 2--3  are immediate consequences of the adjunction in 1. Items 4--5 are proved as in \cite[Prop. 1.1.6]{cdm}, \textit{mutatis mutandis}. Item 6 is \cite[Prop. 1.1.7]{cdm}. 
For item 7, we need to show that $a \wedge \neg a \leq b \vee \neg b$. By item 1, we have $a \leq (a \ominus b) \oplus b$ and $\neg a \leq (b \ominus a) \oplus \neg b$, using the obvious equality $\neg a \oplus \neg b = b \ominus a$. Using items 2 and 6, we now get
\[ 
a \wedge \neg a \leq ((a \ominus b) \oplus b) \wedge ((b\ominus a) \oplus \neg b) 
\leq [(a \ominus b) \wedge (b \ominus a)] \oplus (b \vee \neg b) = b \vee \neg b.
\]
Item 8 is \cite[Lem. 1.1.3 and Lem. 1.1.4(i)]{cdm}.
\end{proof}

As in Section~\ref{s:stone-priestley}, write $(X,\tau^\downarrow)$ for the dual spectral space of  (the underlying distributive lattice of) $A$,  $Y\subseteq X$ for the subset corresponding to the prime MV-ideals of $A$, and $Z\subseteq Y$ for the subset corresponding to the maximal MV-ideals of $A$. Also Recall from Section \ref{s:stone-priestley} that $X$ is partially ordered by the relation $\leq$ given by inclusion of ideals. The following holds in $Y$, though not in $X$:
\begin{prop}[Cf.\ {\cite[Prop. 10.1.11]{BKW1977}}]\label{prop:separation}
If $y, y' \in Y$, $y \nleq y'$ and $y' \nleq y$, then there exist $u,v \in A$ such that $y \in \hat{u}$, $y' \in \hat{v}$ and $u \wedge v = 0$.
\end{prop}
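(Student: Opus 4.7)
The plan is to construct the separating pair $(u,v)$ explicitly from elements that witness the incomparability of $y$ and $y'$. The hypothesis $y \nleq y'$ means $I_{y} \not\subseteq I_{y'}$, so I can pick some $a \in I_y \setminus I_{y'}$; symmetrically $y' \nleq y$ gives some $b \in I_{y'} \setminus I_y$. The key idea is to use the MV-algebraic "truncated subtraction" to take the difference of $a$ and $b$ in both directions: set
\[ u := b \ominus a, \qquad v := a \ominus b. \]
Lemma~\ref{lem:minusplusadjunction}.6 gives $u \wedge v = (b\ominus a) \wedge (a\ominus b) = 0$ for free, so the disjointness requirement is built into the very signature of MV-algebras.

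It remains to verify $y \in \hat{u}$ and $y' \in \hat{v}$, i.e., $u \notin I_{y}$ and $v \notin I_{y'}$. I would argue the first by contradiction: if $b \ominus a \in I_y$, then since $a \in I_y$ and MV-ideals are closed under $\oplus$, we get $(b\ominus a) \oplus a \in I_y$. But by the very definition of the MV-algebraic join recalled in Section~\ref{ssec:mv}, $(b \ominus a) \oplus a = b \vee a$, and since $b \leq b \vee a$ and $I_y$ is a downset, this forces $b \in I_y$, contradicting the choice of $b$. The verification $v \notin I_{y'}$ is completely symmetric.

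The main observation is that, somewhat surprisingly, the primality of $y$ and $y'$ is not used anywhere in this argument: the conclusion holds for any pair of incomparable MV-ideals. The role of the hypothesis $y,y' \in Y$ is merely to place us in the context of the prime spectrum where this property will be used later (for instance to deduce Hausdorffness properties of $Y$ or $Z$). Because the argument reduces to the single identity of Lemma~\ref{lem:minusplusadjunction}.6 together with the definition of the lattice join, there is no real obstacle; the only thing to \emph{find} is the correct construction $u = b \ominus a$, $v = a \ominus b$, which can be seen as a canonical "disjoint decomposition" of the unordered pair $\{a,b\}$ available in every MV-algebra.
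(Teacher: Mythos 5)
Your proof is correct and is essentially the paper's own argument: the paper also sets the separating pair to be the two truncated differences of witnesses $a,b$ of incomparability (with the roles of $a$ and $b$ swapped relative to your labelling), verifies membership via the identity $(x\ominus y)\oplus y = x\vee y$ together with closure of MV-ideals under $\oplus$ and the downset property, and gets disjointness from Lemma~\ref{lem:minusplusadjunction}.6. Your side remark that primality of $I_y,I_{y'}$ is not actually used is also accurate.
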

\begin{proof}
 Pick $a \in I_{y'} \setminus I_{y}$ and $b \in I_{y} \setminus I_{y'}$. Define $u := a \ominus b$ and $v := b \ominus a$. If we had $u \in I_y$, then, since $a \leq (a \ominus b) \oplus b=a\vee b$ by \cite[Prop. 1.1.5]{cdm}, we would get $a \in I_y$, contradicting the choice of $a$. Therefore, $u \not\in I_y$, i.e.\ $y \in \hat{u}$. The proof that $y' \in \hat{v}$ is similar. Since the equation $(a \ominus b) \wedge (b \ominus a) = 0$ holds in any MV-algebra (Lemma~\ref{lem:minusplusadjunction}), we have $u \wedge v = 0$.
\end{proof}
\noindent As a  consequence, we now show that the set of prime MV-ideals of $A$, ordered by inclusion, is a \emph{root system}, i.e.\ a poset such that the upset of any one of its elements is totally ordered. This terminology  first arose in the context of lattice-ordered groups, cf.\ e.g.\  \cite{BKW1977}.
\begin{cor}\label{cor:rootsystem}For each $y\in Y$, ${\uparrow}y$ is totally ordered by $\leq$.  Moreover,
every prime MV-ideal is contained in a unique maximal MV-ideal.\end{cor}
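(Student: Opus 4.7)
The plan is to derive both statements from the separation property just established in Proposition~\ref{prop:separation}, combined with the identification of prime MV-ideals with prime lattice ideals supplied by Proposition~\ref{prop:consistency}.

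For the first assertion, I would argue by contradiction. Suppose $y', y'' \in {\uparrow}y$ are incomparable in $\leq$, i.e., $I_y \subseteq I_{y'}$ and $I_y \subseteq I_{y''}$ but $I_{y'}$ and $I_{y''}$ are $\subseteq$-incomparable. Applying Proposition~\ref{prop:separation} to $y', y''$, obtain $u, v \in A$ with $y' \in \hat u$, $y'' \in \hat v$, and $u \wedge v = 0$. Now $0 \in I_y$, and $I_y$ is a prime lattice ideal by Proposition~\ref{prop:consistency}; hence $u \in I_y$ or $v \in I_y$. In the first case $u \in I_y \subseteq I_{y'}$, contradicting $y' \in \hat u$; the second case is symmetric. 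Hence ${\uparrow}y$ is totally ordered by $\leq$.

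For the second assertion, a standard Zorn's lemma argument---applied to the poset of proper MV-ideals of $A$ containing $I_y$, which is closed under unions of chains since a union of a chain of proper MV-ideals is again a proper MV-ideal (otherwise $1$ would already lie in some member)---yields at least one maximal MV-ideal $M \supseteq I_y$. For uniqueness, suppose $M_1$ and $M_2$ are maximal MV-ideals both containing $I_y$. Then the corresponding points of $X$ lie in ${\uparrow}y \subseteq Y$, so by the first part they are comparable; say $M_1 \subseteq M_2$. Maximality of $M_1$ (and the fact that $M_2$ is proper) forces $M_1 = M_2$.

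I do not foresee any real obstacle: the content is entirely concentrated in the separation lemma, and the remainder of the argument is bookkeeping. The only point that requires a moment's care is checking that the hypotheses of Proposition~\ref{prop:separation} are actually in force, namely that the two points of $Y$ one compares are genuinely incomparable in $\leq$, which is exactly the negation of the conclusion we are after.
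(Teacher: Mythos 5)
Your proposal is correct and follows essentially the same route as the paper: the first assertion is proved by contradiction using Proposition~\ref{prop:separation} together with the primeness of $I_y$ (so that $u\wedge v=0\in I_y$ forces $u\in I_y$ or $v\in I_y$), and the second assertion follows by Zorn's lemma plus the first part. The only cosmetic difference is that you apply Zorn to the poset of all proper MV-ideals containing $I_y$ rather than directly to the chain ${\uparrow}y$, which changes nothing of substance.
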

\begin{proof} This is \cite[Thm. 1.2.11(ii) and Cor. 1.2.12]{cdm}. To prove the first assertion,
let $y \in Y$ and suppose that two distinct elements $y', y'' \in Y$ lie in ${\uparrow}y$. Suppose further, by way of contradiction, that $y'$ and $y''$ are incomparable.  By Proposition~\ref{prop:separation}, there are $u \not\in I_{y'}$ and $v \not\in I_{y''}$ such that $u \wedge v = 0$. But then $u \in I_y$ or $v \in I_y$ since $I_y$ is prime, contradicting either $y \leq y'$ or $y \leq y''$.
The second assertion now follows at once by a standard application of Zorn's lemma to the chain ${\uparrow} y$.\end{proof}
We now discuss the topologies inherited by $Y$ from $X$. We show that the subspace  $(Y,\tau^\downarrow)$ is spectral, and that  the distributive lattice dual to $(Y,\tau^\downarrow)$ is isomorphic to the lattice $\mathrm{KCon}\, A$ of principal MV-congruences (equivalently, principal MV-ideals) of $A$.\footnote{There are several places in the literature on MV-algebras and lattice-ordered groups where this result---essentially, Proposition~\ref{prop:YasKPrin}---appears under various guises. The reference closest to the spirit of the present paper is \cite{Cignolietal91}, where the authors interpret via Priestley duality Belluce's results on the map $\lambda$ in \cite{Bel1986}. Compare also \cite[Def. 8.1, Prop. 8.2, and Cor. 8.9]{DuPo2010}.} For there is a lattice homomorphism $\lambda \colon  A \to \mathrm{Con}{\,A}$ from $A$ to the lattice of MV-congruences on $A$ which sends an element $a$ to the MV-congruence $\lambda(a)$ generated by the pair $(a,0)$, or, equivalently, to the principal MV-ideal $\langle a\rangle\subseteq A$.  (Indeed, $\lambda$ clearly preserves $0$ and $1$;  it preserves $\wedge$ and $\vee$ by Proposition \ref{prop:idealgenerated}.) The kernel of $\lambda$ is the congruence $\sigma$ on $A$ such that  $a \, \sigma \, b$ if, and only if, $\langle a \rangle = \langle b\rangle$. The image of the homomorphism $\lambda$ is the lattice $\mathrm{KCon}\,A$ of principal (or, equivalently, finitely generated or compact) MV-congruences of $A$. Hence there is an isomorphism of distributive lattices $A/\sigma \isom \mathrm{KCon}\,A$. Recall that $(X(A/\sigma),\tau^\downarrow)$ denotes the dual spectral space  of  $A/\sigma$.
\begin{prop}\label{prop:YasKPrin}
For any MV-algebra $A$, the spectral space $(X(A/\sigma),\tau^\downarrow)$ dual to the distributive lattice $A/\sigma$ is homeomorphic to the subspace $(Y,\tau^\downarrow)$ of $(X,\tau^\downarrow)$.
\end{prop}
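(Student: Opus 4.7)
The plan is to realize $A/\sigma$ as a lattice quotient of $A$ and apply Proposition~\ref{prop:quotientsubspace} to obtain a homeomorphism of the Priestley dual of $A/\sigma$ with a closed subspace $S_\sigma$ of $(X,\tau^p,\leq)$; then show $S_\sigma = Y$ as sets; finally transfer the identification to the spectral topology. Since $\sigma$ is by definition a distributive lattice congruence on $A$ (it is the kernel of the lattice homomorphism $\lambda$), Proposition~\ref{prop:quotientsubspace} gives a homeomorphism of Priestley spaces between the dual of $A/\sigma$ and $(S_\sigma,\tau^p,\leq)$, where
\[
S_\sigma = \{x \in X \mid \forall a,b \in A,\ \langle a \rangle = \langle b \rangle \Longrightarrow (a \in I_x \liff b \in I_x) \}.
\]

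The heart of the argument is to show $S_\sigma = Y$. For the inclusion $Y \subseteq S_\sigma$, take a prime MV-ideal $I_y$ and suppose $\langle a \rangle = \langle b \rangle$. By Proposition~\ref{prop:idealgenerated}, $a \in \langle b \rangle$ means $a \leq kb$ for some integer $k \geq 1$, and symmetrically for $b$. Since $I_y$ is closed under $\oplus$ and downward closed, $b \in I_y$ forces $kb \in I_y$ and hence $a \in I_y$; the converse is the same. For the inclusion $S_\sigma \subseteq Y$, let $x \in S_\sigma$. Then $I_x$ is a prime lattice ideal that is $\sigma$-saturated, and it suffices, by Proposition~\ref{prop:consistency}, to show $I_x$ is an MV-ideal. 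The crucial point is the identity $\langle a \oplus b \rangle = \langle a \vee b \rangle$ from Proposition~\ref{prop:idealgenerated}, which gives $(a \oplus b) \sigma (a \vee b)$. Since $a \vee b \in I_x$ whenever $a,b \in I_x$ (as $I_x$ is a lattice ideal) and $I_x$ is $\sigma$-saturated, we conclude $a \oplus b \in I_x$. Together with $0 \in I_x$ and downward closure, this makes $I_x$ an MV-ideal.

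Finally, I pass from Priestley to spectral topologies. The spectral topology on the dual of $A/\sigma$ has as basic opens the sets $\widehat{[a]_\sigma}$ for $a \in A$, and under the bijection $S_\sigma = Y$ established above, these correspond exactly to the sets $\hat a \cap Y$ for $a \in A$. The latter form the basis for the subspace topology on $Y$ inherited from $(X,\tau^\downarrow)$, so the two spectral spaces are homeomorphic.

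The main obstacle is the inclusion $S_\sigma \subseteq Y$, which requires showing that a $\sigma$-saturated prime lattice ideal is automatically an MV-ideal; this turns on the equality $\langle a \oplus b \rangle = \langle a \vee b \rangle$ from Proposition~\ref{prop:idealgenerated}, which makes $\oplus$ and $\vee$ indistinguishable modulo $\sigma$. The remaining ingredients are purely formal applications of Proposition~\ref{prop:quotientsubspace} and the translation between the Priestley and spectral sides of Stone duality.
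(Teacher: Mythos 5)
Your proposal is correct and follows essentially the same route as the paper: apply Proposition~\ref{prop:quotientsubspace} to identify the dual of $A/\sigma$ with $S_\sigma$, then prove $S_\sigma = Y$ using Proposition~\ref{prop:idealgenerated} (the containment $\langle a\rangle = \langle b\rangle \Rightarrow$ mutual membership for the inclusion $Y \subseteq S_\sigma$, and the identity $\langle a\oplus b\rangle = \langle a\vee b\rangle$ for $S_\sigma \subseteq Y$). Your final paragraph on passing from the Priestley to the spectral topology makes explicit a step the paper leaves implicit, but this is only a matter of presentation.
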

\begin{proof}
By Proposition~\ref{prop:quotientsubspace}, $(X(A/\sigma),\tau^\downarrow)$ is homeomorphic to the subspace
\[ S_\sigma = \{ x \in X \ \mid \ \forall (a,b) \in \sigma \colon (a \in I_x \liff b \in I_x)\}.\]
It thus suffices to show that $S_\sigma = Y$. Let $x \in X$. Suppose first that $x \in Y$, i.e.\ $I_x$ is an MV-ideal. If $(a,b) \in \sigma$ and $a \in I_x$, then $b$ is in the MV-ideal generated by $a$, which is contained in $I_x$. Hence, $b \in I_x$. The proof that $b \in I_x$ implies $a \in I_x$ is symmetric. Hence, $x \in S_\sigma$. Conversely, suppose that $x \in S_\sigma$. We show that $I_x$ is an MV-ideal. If $a, b \in I_x$, then $a \vee b \in I_x$ since $I_x$ is a lattice ideal. By Proposition~\ref{prop:idealgenerated} we have $\langle a \vee b\rangle = \langle a \oplus b\rangle$, i.e.\ $(a \vee b, a\oplus b) \in \sigma$. Since $x \in S_\sigma$, we deduce $a \oplus b \in I_x$, as required.
\end{proof}
\begin{rem}\label{r:mv-spectra}
 The sets $Y$ and $Z$  can also be directly topologized using the MV-ideals of
$A$. The most common such topology on $Y$ is known as the \emph{spectral}, \cite[Def. 4.14]{Mun2011}, traditionally also called the  \emph{hull-kernel topology} when restricted to $Z$: its closed sets are precisely the collections of prime MV-ideals of the form 
\[
\hat{I}^{\, c}:=\{y \in Y \mid I \subseteq I_{y} \}
\]
as $I$ ranges over all  MV-ideals of $A$. This topology is equal to the topology that is induced on $Y$ viewed as a subspace of the spectral space $(X,\tau^\downarrow)$, and thus, by the previous proposition, to the spectral topology on the dual space of the distributive lattice $A/\sigma$. One can similarly define a \emph{dual spectral} topology and a \emph{Priestley topology} on $Y$ by analogy with the lattice case. It is elementary that each of the spectral, dual spectral, and Priestley topologies thus defined on $Y$  agrees with the restriction to $Y$ of  the homonymous topology defined on $X$ as in Section \ref{s:stone-priestley} via the underlying lattice of $A$. \emph{Hence, throughout the paper, we only need consider the restrictions of $\tau^{\downarrow}$, $\tau^{\uparrow}$, and $(\tau^{p},\leq)$ from $X$ to $Y$ and $Z$.} \qed
\end{rem}
\begin{rem}\label{r:filters}As in the case of lattices,  \emph{MV-filters} --- upsets containing $1$ and closed under $a \odot b:= \neg( \neg a \oplus \neg b)$ --- are dual to MV-ideals. Because MV-negation is a dual order-automorphism of the underlying lattice of the MV-algebra $A$, the map $I \mapsto \neg I$, where $\neg I := \{\neg a \mid a \in I\}$, is a bijection between the lattice ideals and filters. It restricts to a bijection between prime or maximal MV-ideals and prime or maximal MV-filters, respectively. 
Note, however, that the lattice-theoretic  bijection $I \longmapsto I^{c}$ between the prime filters and the prime ideals of the underlying distributive lattice of $A$ does not restrict to a bijection between prime MV-ideals and prime MV-filters. Indeed, if $I$ is, say, a maximal MV-ideal of $A$, then simple examples show that $I^{c}$ is not in general an MV-filter of $A$ (though it is, of course, a prime filter of the underlying lattice). Following tradition, we use MV-ideals rather than MV-filters in the sequel. In particular, we stress that $Y$ denotes the set of points of $X$ such that $I_y$ is a prime MV-ideal; this is not the same set as the set of points $Y'$ such that $F_y$ is a prime MV-filter, even though the two are connected via the natural bijection $\neg$ given above. Cf.\ the notation adopted at the beginning of Section \ref{s:mvdual} below.\qed
\end{rem}

\section{Lifting operations and inequalities to the canonical extension} \label{sec:canext}
In the next two sections we study the operations $\oplus$ and $\neg$ on an MV-algebra as additional operations on the underlying distributive lattice. This is done through the use of canonical extensions, which were introduced for Boolean algebras first in \cite{JT1952}, and later generalized to distributive lattices in \cite{GehJon1994}. In this section we collect the relevant definitions and facts from the theory of canonical extensions of distributive lattices. For further background  we refer the reader to \cite{GeJo2004, GePr07a}.

Let $D$ be a distributive lattice with dual Priestley space $(X(D), \tau^p, \leq)$. By the Stone-Priestley Duality Theorem, $D$ is isomorphic to the lattice of clopen downsets of $X(D)$. In particular, $D$ embeds into the complete lattice of all downsets of the poset $(X(D),\leq)$. The canonical extension of $D$ provides a purely algebraic description of this embedding.
\begin{dfn}\label{d:canext}
Let $D$ be a distributive lattice. An embedding $\eta \colon  D \embeds C$, where $C$ is a complete lattice, is called a {\it canonical extension} of $D$ if the following two properties hold.
\begin{enumerate}
\item The image $\eta[D]$ of $D$ is both $\bigvee\,\bigwedge$-dense and $\bigwedge\,\bigvee$-dense in $C$; that is, any element of $C$ is equal to a join of meets and to a meet of joins of elements from $\eta[D]$.
\item If $S$ and $T$ are subsets of $D$ such that $\bigwedge \eta[S] \leq \bigvee \eta[T]$ in $C$, then there are finite subsets $S' \subseteq S$ and $T' \subseteq T$ such that $\bigwedge S' \leq \bigvee T'$ in $D$.
\end{enumerate}
The first condition is commonly referred to as {\it denseness}, and the second condition as {\it compactness}.
\end{dfn}
\begin{thm}\label{thm:canextexun}
Let $D$ be a distributive lattice. The following hold.
\begin{enumerate}
\item \textup{(Existence.\textup)} There exists a canonical extension $\eta : D \embeds D^\delta$  of $D$.
\item \textup{(Uniqueness.)} If $\eta : D \embeds C$ and $\eta' : D \embeds C'$ are canonical extensions of $D$, then there is an isomorphism $\phi : C \isom C'$ such that $\phi \circ \eta = \eta'$.
\end{enumerate}
\end{thm}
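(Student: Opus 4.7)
\textbf{Plan for the proof of Theorem~\ref{thm:canextexun}.} My approach will be to establish existence via an explicit construction on the Priestley dual space, and then to deduce uniqueness by identifying a canonical characterization of the ``closed'' and ``open'' elements of any canonical extension in terms of filters and ideals of $D$.

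For existence, I will take $D^\delta$ to be the complete lattice $\mathcal{D}(X)$ of all downsets of the underlying poset $(X,\leq)$ of the Priestley dual space of $D$, and $\eta \colon D \embeds D^\delta$ to be the Stone map $a \mapsto \hat a$ of equation (\ref{eq:stonemap}). That $\eta$ is an order-embedding (and in particular a lattice embedding) is immediate from Corollary~\ref{cor:prdualobjects}. For $\bigvee\bigwedge$-density, given any $U \in \mathcal{D}(X)$, write $U = \bigcup_{x \in U} {\downarrow}x$; the total order-disconnectedness of $(X,\tau^p,\leq)$ then yields ${\downarrow}x = \bigcap\{\hat a \mid x \in \hat a\}$, exhibiting $U$ as a join of meets of elements of $\eta[D]$. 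The dual argument applied to upsets gives $\bigwedge\bigvee$-density: any $U \in \mathcal{D}(X)$ satisfies $U = \bigcap_{x \notin U}\bigcup\{\hat a \mid x \notin \hat a\}$. For compactness, suppose $\bigcap_{a \in S}\hat a \subseteq \bigcup_{b \in T}\hat b$ in $D^\delta$; equivalently, the family of $\tau^p$-closed sets $\{\hat a \mid a \in S\} \cup \{\hat b^{\,c} \mid b \in T\}$ has empty intersection, so by compactness of $(X,\tau^p)$ a finite subfamily already has empty intersection, yielding finite $S' \subseteq S$ and $T' \subseteq T$ with $\bigcap_{a \in S'}\hat a \subseteq \bigcup_{b \in T'}\hat b$, which by Corollary~\ref{cor:prdualobjects} means $\bigwedge S' \leq \bigvee T'$ in $D$.

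For uniqueness, suppose $\eta \colon D \embeds C$ and $\eta' \colon D \embeds C'$ are both canonical extensions. Call an element $k \in C$ \emph{closed} if it is a meet of elements of $\eta[D]$, and \emph{open} if it is a join of such elements; write $K(C)$ and $O(C)$ for the respective subposets, and similarly for $C'$. The denseness condition says that every element of $C$ is a join of elements of $K(C)$, and a meet of elements of $O(C)$. I will associate to each $k \in K(C)$ the filter $F_k := \{a \in D \mid \eta(a) \geq k\}$ of $D$, and to each $u \in O(C)$ the ideal $I_u := \{a \in D \mid \eta(a) \leq u\}$. The compactness condition is precisely what is needed to show that $k \leq u$ in $C$ if, and only if, $F_k \cap I_u \neq \emptyset$, and in particular that the assignments $k \mapsto F_k$ and $u \mapsto I_u$ are order-reversing and order-preserving bijections, respectively, between $K(C)$ (resp.\ $O(C)$) and the poset of filters (resp.\ ideals) of $D$. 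The same analysis applies to $C'$, yielding canonical order-isomorphisms $K(C) \isom K(C')$ and $O(C) \isom O(C')$ that commute with $\eta$ and $\eta'$.

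To assemble these into the required isomorphism $\phi \colon C \isom C'$, I will define $\phi(c) := \bigvee\{k' \in K(C') \mid F_{k'} \subseteq F_k \text{ for some } k \in K(C), k \leq c\}$ and verify via density and compactness that this is a well-defined complete-lattice isomorphism satisfying $\phi \circ \eta = \eta'$, with inverse defined symmetrically. The technical heart of this verification, which I expect to be the main obstacle, is checking that $\phi$ preserves arbitrary meets as well as joins: this requires carefully exploiting compactness to show that the separating pair of a closed and open element on one side corresponds bijectively to such a pair on the other side, so that the join-formula for $\phi$ agrees with the analogous meet-formula in terms of open elements.
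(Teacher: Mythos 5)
The paper does not actually prove this theorem: its ``proof'' consists of pointers to \cite{GehJon1994} (and \cite{GehHar2001} for a choice-free construction), so the comparison is necessarily with the standard arguments those references contain. Your existence argument is a correct and essentially complete fleshing-out of exactly the construction the paper names: $D^\delta$ is the lattice of all downsets of $(X,\leq)$ with the Stone map as the embedding, the two density conditions follow from ${\downarrow}x=\bigcap\{\hat a\mid x\in\hat a\}$ and its order-dual, and compactness of the Priestley topology yields the compactness axiom. No objection there.

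The uniqueness half follows the standard Gehrke--J\'onsson route (closed and open elements versus filters and ideals), and the preparatory claims --- $k\leq u$ iff $F_k\cap I_u\neq\emptyset$, and the resulting bijections --- are correct. But the explicit formula you give for $\phi$ is wrong as written: you set $\phi(c)=\bigvee\{k'\in K(C')\mid F_{k'}\subseteq F_k\text{ for some }k\in K(C),\ k\leq c\}$. Since $k\mapsto F_k$ is order-\emph{reversing}, the condition $F_{k'}\subseteq F_k$ selects the closed elements of $C'$ lying \emph{above} the intended image $\bigwedge\eta'[F_k]$, not below it. Concretely, the closed element $k':=\eta'(1)=1_{C'}$ has $F_{k'}=\{1\}\subseteq F_k$ for every $k$, so whenever there is any closed $k\leq c$ (and there always is, e.g.\ $\eta(0)$), your formula returns $\phi(c)=1_{C'}$; thus $\phi$ is the constant map $1$. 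The fix is to reverse the inclusion, or more simply to set $\phi(c)=\bigvee\{\bigwedge\eta'[F_k]\mid k\in K(C),\ k\leq c\}$. Beyond that slip, the part you defer --- that this $\phi$ is a well-defined complete-lattice isomorphism, which hinges on showing that for closed $k'$ and open $u'$ the relation $k'\leq\phi(c)\leq u'$ is governed by the same filter--ideal data as in $C$ --- is where the actual content of the uniqueness theorem lies; it is precisely what \cite[Thm.~2.6]{GehJon1994} supplies, and your proposal only announces a plan for it.
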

\begin{proof}
For  existence, one may show (using Stone's Prime Filter Theorem) that the embedding of $D$ into the complete lattice of downsets of $(X(D),\leq)$ given by $a \mapsto \hat{a}$ is a canonical extension of $D$; see \cite{GehJon1994}. For a construction avoiding non-constructive axioms, see\ e.g.\ \cite[Prop. 2.6]{GehHar2001}. For uniqueness, see \cite[Thm. 2.6]{GehJon1994}.
\end{proof}
We denote the (essentially unique) canonical extension of  $D$ by $D^\delta$;  we identify $D$ with its isomorphic image $\eta[D]\subseteq D^\delta$. By the uniqueness statement in the theorem it is easy to verify that for any distributive lattices $D$ and $E$, one has $(D \times E)^\delta \isom D^\delta \times E^\delta$ and $(D^\dual)^\delta \isom (D^\delta)^\dual$, where $(\cdot)^{\dual}$ denotes the lattice in the opposite order.

 An element $x \in D^\delta$ is  a {\it filter} (or {\it closed}) {\it element} if it is a meet of elements of $D$. Order-dually, $y \in D^\delta$ is an {\it ideal} (or {\it open}) {\it element} if it is a join of elements of $D$. Any lattice filter $F$ of $D$ gives rise to the filter element $\bigwedge F \in D^\delta$, and any lattice ideal $I$ of $D$ gives rise to the ideal element $\bigvee I \in D^\delta$. The compactness property of $D^\delta$ is equivalent to the statement that for any lattice filter $F$ and lattice ideal $I$ of $D$, one has $\bigwedge F \leq \bigvee I$ if, and only if, $F \cap I \neq \emptyset$. 
The assignment 
\begin{align}
\label{eq:filterbijection}
F \longmapsto \bigwedge F
\end{align}
yields an isomorphism between the complete lattice of filters of $D$, ordered by reverse inclusion, and the complete lattice of filter elements of $D^\delta$, that we denote by $F(D^\delta)$. Order-dually, 
\begin{align}
\label{eq:idealbijection}
I \longmapsto \bigvee I
\end{align}
is an isomorphism between ideals of $D$, ordered by inclusion, and the ideal elements of $D^\delta$, denoted $I(D^\delta)$.

We denote the set of completely join-irreducible elements\footnote{
Recall that $x \in D$ is \emph{completely join-irreducible} if, whenever $x = \bigvee S$ for some  subset $S \subseteq D$, we have $x \in S$. Note that $0 = \bigvee \emptyset$ is never completely join-irreducible. The definition of {\it completely meet-irreducible} is dual, and $1$ is never completely meet-irreducible since $1 = \bigwedge \emptyset$.}
of $D^\delta$ by $J^\infty(D^\delta)$, or just $J^\infty$ when $D^{\delta}$ is clear from the context, and the set of completely meet-irreducible elements of $D^\delta$ by $M^\infty(D^\delta)$ or $M^\infty$. Observe that $J^\infty(D^\delta) \subseteq F(D^\delta)$ and $M^\infty(D^\delta) \subseteq I(D^\delta)$, by the denseness property. It can be shown that the assignment $F \longmapsto \bigwedge F$ restricts to a bijection between the set of prime filters of $D$ and the set $J^\infty(D^\delta)$; order-dually, the assignment $I \longmapsto \bigvee I$ restricts to a bijection between the set of prime ideals of $D$ and the set $M^\infty(D^\delta)$. Under these identifications, the bijection $F\longmapsto F^{c}$ between the sets of prime filters and prime ideals of $D$ corresponds to the bijection
\begin{align}
\label{eq:kappadef}
\kappa \colon J^\infty(D^\delta)\ &\longrightarrow \ M^\infty(D^\delta)\\  x \quad \quad &\overset{\kappa}{\longmapsto} \ \bigvee \{ a \in D \mid x \nleq a \}\nonumber
\end{align}
which has the important property
\begin{align} 
\label{eq:kappaprop}
\forall u \in D^\delta,\, x \in J^\infty(D^\delta) \ : \ x \leq u \iff u \nleq \kappa(x)\,.
\end{align}
We  remark that, under the isomorphisms $(D^\delta)^n \isom (D^n)^\delta$ and $(D^\delta)^\dual \isom (D^\dual)^\delta$, where $n\geq 1$ is an integer, we have $F((D^\delta)^n) \isom F(D^\delta)^n$ and $F((D^\delta)^\dual) \isom I(D^\delta)$;  order-dual statements hold for ideal elements.

Let $f \colon D^n \to D$ be an order-preserving $n$-ary operation on a distributive lattice $D$. We define maps $f^\sigma, f^\pi \colon (D^\delta)^n \to D^\delta$. For a filter element $x$ of $(D^\delta)^n$, let 
\begin{align}\label{eq:lift1}
\overline{f}(x) &:= \bigwedge\{f(a) \ | \ x \leq a \in D^n\}, 
\end{align}
and for an ideal element $y$ of $(D^\delta)^n$, let 
\begin{align}\label{eq:lift2}
\overline{f}(y) &:= \bigvee \{f(a) \ | \ y \geq a \in D^n\}.
\end{align}
Now, for $u \in (D^\delta)^n$, we define
\begin{align} 
f^\sigma(u) &:= \bigvee \{\overline{f}(x) \ | \ u \geq x \in F(D^\delta)^n\}, \label{eq:sigma}\\
f^\pi(u) &:= \bigwedge \{\overline{f}(y) \ | \ u \leq y \in I(D^\delta)^n \}.\label{eq:pi}
\end{align}
The operations (\ref{eq:sigma}--\ref{eq:pi}) are called the {\it  $\sigma$-extension} and the {\it  $\pi$-extension}  of $f$, respectively.\footnote{For a more general definition of $\sigma$- and $\pi$-extensions that does not assume monotonicity of $f$, see \cite[Sec. 2.4]{GeJo2004}.}
Although they both restrict to $\overline{f}$ on filter and ideal elements of $D^\delta$,  they do not necessarily coincide.\footnote{The operation $\oplus$ of Chang's non-simple totally ordered MV-algebra, for example, has distinct  $\sigma$- and $\pi$-extensions; see \cite[Prop. 1]{GePr00}.} 
The above  is easily  adapted to operations which are order-reversing in some coordinates. If $g \colon D \times D \to D$ is, say,  order-preserving in the first coordinate and order-reversing in the second coordinate, then $g$ acts as an order-preserving operation on $D \times D^\dual$, so the preceding definitions  apply up to the appropriate order flips. 

It will be important that residuated (=adjoint) pairs of operations lift to the canonical extension:\footnote{This was first proved, in the distributive case, by B.\ J\'onsson during early work in 1996 on the paper \cite{GeJo2004}.  Here we  give a new proof based on a general method from \cite{GNV2005}. See also \cite[Prop. 2]{Ge2012} for a different  proof, in the context of Heyting algebras.}
\begin{prop} \label{prop:adjunctionlifts}
Let $D$ be a distributive lattice, and suppose that $f \colon D \times D \to D$ and $g \colon D \times D^\dual \to D$ are order-preserving operations such that 
\[ \forall a, b, c \in D : f(a,b) \leq c \ \iff \  a \leq g(c,b).\] 
Then:
\[ \forall u, v, w \in D^\delta: f^\sigma(u,v) \leq w \ \iff  \ u \leq g^\pi(w,v).\] 
\end{prop}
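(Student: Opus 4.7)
My plan is to first establish the desired adjunction at the level of filter and ideal elements of $D^\delta$, and then lift it to arbitrary elements by means of the denseness and compactness properties of the canonical extension.

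\textbf{Step 1 (the pointwise claim).} I would first prove that for all $x_1, x_2 \in F(D^\delta)$ and all $y \in I(D^\delta)$,
\[ \overline{f}(x_1, x_2) \leq y \iff x_1 \leq \overline{g}(y, x_2). \]
For the forward direction, I would write $\overline{f}(x_1,x_2) = \bigwedge \{f(a,b) \mid a,b \in D,\, a \geq x_1,\, b \geq x_2\}$ and $y = \bigvee \{c \in D \mid c \leq y\}$. Since $\{a \in D \mid a \geq x_i\}$ is a filter of $D$ and $f$ is order-preserving, the family on the left is downward-directed; the family on the right is upward-directed because $\{c \in D \mid c \leq y\}$ is an ideal. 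Compactness therefore reduces the inequality to the existence of single elements $a,b,c \in D$ with $a \geq x_1$, $b \geq x_2$, $c \leq y$, and $f(a,b) \leq c$. The hypothesized adjunction in $D$ then yields $a \leq g(c,b)$; since $g(c,b)$ lies in the family whose join defines $\overline{g}(y, x_2)$, this gives $x_1 \leq a \leq g(c,b) \leq \overline{g}(y, x_2)$. The converse is symmetric: after checking that $\{g(c,b) \mid c \leq y,\, b \geq x_2\}$ is upward-directed (using that $g$ is order-preserving in the first and order-reversing in the second coordinate), compactness reduces $x_1 \leq \overline{g}(y, x_2)$ to a single $D$-inequality $a \leq g(c,b)$ with $a \geq x_1$, and converting via the adjunction in $D$ yields $\overline{f}(x_1, x_2) \leq f(a,b) \leq c \leq y$.

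\textbf{Step 2 (lifting by denseness).} By denseness, $u$ and $v$ are joins of the filter elements below them, and $w$ is the meet of the ideal elements above it. Using the natural identification of $(D \times D^\dual)^\delta$ with $D^\delta \times (D^\delta)^\dual$, under which ideal elements correspond to pairs $(y, x_2)$ with $y \in I(D^\delta)$ and $x_2 \in F(D^\delta)$, the definitions (\ref{eq:sigma}) and (\ref{eq:pi}) of $f^\sigma$ and $g^\pi$ unfold so that both inequalities $f^\sigma(u,v) \leq w$ and $u \leq g^\pi(w,v)$ become equivalent to the single universally quantified condition that
\[ \overline{f}(x_1, x_2) \leq y \quad\text{(equivalently, by Step 1, }\ x_1 \leq \overline{g}(y, x_2)\text{)} \]
holds for every $x_1, x_2 \in F(D^\delta)$ with $x_1 \leq u$, $x_2 \leq v$ and every $y \in I(D^\delta)$ with $y \geq w$. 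The conclusion is then immediate by Step 1.

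\textbf{Main obstacle.} The delicate point will be Step 1, where the mixed monotonicity of $g$ (covariant in the first coordinate, contravariant in the second) has to be handled carefully so that the directedness of the relevant families in $D$ can be established and compactness used to collapse finite subsets to single-element inequalities. Once this is in hand, Step 2 is essentially a bookkeeping exercise in denseness.
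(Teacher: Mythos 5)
Your proof is correct, but it takes a genuinely different route from the paper's. The paper does not argue directly with filter and ideal elements: it encodes the one-sided implication $f^\sigma(u,v)\leq w \Rightarrow u\leq g^\pi(w,v)$ into a pointwise inequality $p\leq s\vee t$ between auxiliary ternary operations on $D\times D\times D^\dual$ (where $t$ is the characteristic function of the failure of $f(a,b)\leq c$), lifts that inequality using the general estimate $(s\vee t)^\sigma\leq s^\pi\vee t^\sigma$ from \cite[Lem. 5.11]{GNV2005}, and evaluates; the other direction is declared similar. Your argument instead proves the adjunction first on filter/ideal elements via compactness and directedness, and then propagates it to all of $D^\delta$ by denseness, which yields both directions simultaneously from the single pointwise equivalence of Step 1. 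Both steps of your plan go through: in Step 1 the family $\{f(a,b) \mid a\geq x_1,\ b\geq x_2\}$ is down-directed and $\{g(c,b)\mid c\leq y,\ b\geq x_2\}$ is up-directed exactly as you say (using the mixed monotonicity of $g$), so compactness does collapse the inequalities to single witnesses in $D$; and in Step 2 the unfoldings of (\ref{eq:sigma}) and (\ref{eq:pi}), together with $w=\bigwedge\{y\in I(D^\delta)\mid y\geq w\}$ and $u=\bigvee\{x_1\in F(D^\delta)\mid x_1\leq u\}$, reduce both sides to the same universally quantified condition. What the paper's method buys is brevity and a reusable general principle (the same $\sigma$/$\pi$-interaction lemma drives other canonicity arguments in \cite{GNV2005}); what yours buys is self-containment --- it avoids the external lemma entirely and is closer in spirit to J\'onsson's original argument that the paper's footnote alludes to.
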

\begin{proof}
We only prove that $f^\sigma(u,v) \leq w \Rightarrow u \leq g^\pi(w,v)$, the other direction being similar. Let us write $E := D \times D \times D^\dual$, and define operations $p, s, t : E \to D$ by
\begin{align*} 
p(a,b,c) &:= a, \quad s(a,b,c) := g(c,b), \\
t(a,b,c) &:= 0 \text{ if } f(a,b) \leq c, \text{ and } t(a,b,c) := 1 \text{ if } f(a,b) \nleq c.
\end{align*}
Observe that, using the assumption, the inequality $p \leq s \vee t$ holds pointwise on $E$. Therefore, $p^\sigma \leq (s \vee t)^\sigma$ holds pointwise on $E^\delta$. Note that $t$ is order-preserving (as a function from $E$ to $D$), while $s$ is order-reversing. Hence, by \cite[Lem. 5.11]{GNV2005}, we have $(s \lor t)^\sigma \leq s^\pi \lor t^\sigma$, so $p^\sigma \leq s^\pi \lor t^\sigma$.
Now, it is not hard to see from the definitions of the extensions that, for all $u, v, w \in D^\delta$, we have  $p^\sigma(u,v,w) = u$, $s^\pi(u,v,w) = g^\pi(w,v)$, and $t^\sigma(u,v,w) = 0$ if $f^\sigma(u,v) \leq w$. In particular, if $f^\sigma(u,v) \leq w$, then we get
\[ u = p^\sigma(u,v,w) \leq (s^\pi \lor t^\sigma)(u,v,w) = g^\pi(w,v) \lor 0 = g^\pi(w,v). \qedhere\]
\end{proof}
Equally important will be that certain inequalities also lift to the canonical extension. By a {\it binary dual operator} we mean a binary operation on $D$ which preserves finite meets in both coordinates. The following proposition is a special case of {\cite[Thm. 4.6]{GehJon1994}}, where the result is proved for operations which in each coordinate preserve either finite joins or finite meets.
\begin{prop}\label{thm:canonicity}
Let $s$ and $t$ be terms in the language of distributive lattices with an additional binary function symbol $f$. For any distributive lattice $D$, and for any binary dual operator $f^D$  on $D$, if $(D,f^D)$ satisfies the inequality $s \leq t$ then so does $(D^\delta, (f^D)^\pi)$.
\end{prop}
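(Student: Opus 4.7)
The plan is to establish canonicity by structural induction on terms, leveraging the fact that a binary dual operator's $\pi$-extension behaves well with respect to filter and ideal approximations. The overall structure is to show that $s \le t$ on $D$ lifts to $s^{D^\delta} \le t^{D^\delta}$ by relating the recursive term interpretation on $D^\delta$ (using $f^\pi$ for the symbol $f$) to approximations by elements of $D$ via the denseness property.

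First I would record the key technical fact about $f^\pi$. Since $f^D$ preserves finite meets in each coordinate, its $\pi$-extension inherits several features: it is order-preserving in each coordinate, it sends pairs of filter elements to filter elements, and crucially for $(x,y) \in F(D^\delta)^2$ one has $f^\pi(x,y) = \bigwedge\{f^D(a,b) \mid x\le a,\, y\le b,\, a,b\in D\}$. Combining this with meet-preservation of $f^D$ in $D$, one deduces that $f^\pi$ preserves arbitrary down-directed meets of filter elements in each coordinate. This ``meet-smoothness'' will be the engine of the induction.

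Next, I would prove by induction on term complexity that, writing $\tau^{D^\delta}$ for the recursive interpretation of a term $\tau$ in $(D^\delta,(f^D)^\pi)$, the following two approximation properties hold:
\begin{itemize}
\item[\textup{(i)}] $\tau^{D^\delta}(\vec u) = \bigwedge\bigl\{\tau^{D^\delta}(\vec y) \mid \vec u \le \vec y \in I(D^\delta)^n\bigr\}$ for all $\vec u \in (D^\delta)^n$;
\item[\textup{(ii)}] $\tau^{D^\delta}(\vec y) = \bigvee\bigl\{\tau^D(\vec a) \mid \vec y \ge \vec a \in D^n\bigr\}$ for all $\vec y \in I(D^\delta)^n$.
\end{itemize}
The base cases (projections and constants) are immediate from the density of $D$ in $D^\delta$ together with the filter/ideal-element bijections \eqref{eq:filterbijection}--\eqref{eq:idealbijection}. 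The induction steps for $\vee$ and $\wedge$ use that $D^\delta$ is a completely distributive lattice (so directed joins and down-directed meets of $D$-elements distribute appropriately). The induction step for $f$ is the delicate one: given subterms $\sigma_1,\sigma_2$ satisfying (i) and (ii), one must show $f^\pi(\sigma_1^{D^\delta},\sigma_2^{D^\delta})$ also satisfies them, and this is precisely where the meet-smoothness of $f^\pi$ enters, letting one push $f^\pi$ through the down-directed meet of filter (or ideal) approximations.

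With (i) and (ii) in hand, the canonicity conclusion follows easily. Fix $\vec u \in (D^\delta)^n$. By (i), it suffices to show $s^{D^\delta}(\vec y) \le t^{D^\delta}(\vec y)$ for every ideal tuple $\vec y \ge \vec u$. By (ii) applied to both $s$ and $t$, the two sides are joins over $\{\vec a \in D^n \mid \vec a \le \vec y\}$ of $s^D(\vec a)$ and $t^D(\vec a)$ respectively, and the hypothesis $s^D(\vec a)\le t^D(\vec a)$ in $D$ gives termwise domination, so the joins are comparable. The main obstacle will be the $f$-step in the induction: the interplay between the recursive definition of $\tau^{D^\delta}$ and the ``global'' meet-of-joins description in (i)--(ii) requires a careful argument using the meet-smoothness of $f^\pi$ together with the compactness property of the canonical extension to show that the recursively-computed value on an ideal tuple coincides with the join of $D$-values below it.
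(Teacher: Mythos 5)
First, for context: the paper does not actually prove this proposition --- it quotes it as a special case of \cite[Thm. 4.6]{GehJon1994} --- so what you are proposing is in effect a reconstruction of Gehrke and J\'onsson's argument. Your architecture is indeed the standard route: properties (i) and (ii) together say exactly that $\tau^{D^\delta}=(\tau^D)^\pi$ for every term $\tau$, and from this, together with $s^D\leq t^D$ pointwise on $D^n$, the inequality on $D^\delta$ follows as you describe. Property (ii), the $\vee$- and $\wedge$-steps (using that $D^\delta$ is a downset lattice, hence completely distributive, plus directedness of $\{\vec a\in D^n\mid \vec a\leq \vec y\}$ and of $\{\vec y\in I(D^\delta)^n\mid \vec u\leq\vec y\}$), and the final deduction are all sound.

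The gap is in the $f$-step of (i), exactly where you place the weight. The ``engine'' you propose --- that $f^\pi$ preserves down-directed meets of \emph{filter} elements --- is true but essentially vacuous: a down-directed meet of filter elements is the filter element of the union filter, so this holds for any monotone $f$ and does not use the dual-operator hypothesis at all. More importantly, it is not the fact the induction needs. By (ii), the values $\sigma_i^{D^\delta}(\vec y)$ on ideal tuples are \emph{ideal} elements, and (i) expresses each $\sigma_i^{D^\delta}(\vec u)$ as a down-directed meet of ideal elements; so the $f$-step requires
\[
f^\pi\Bigl(\bigwedge_{\vec y}\sigma_1^{D^\delta}(\vec y),\ \bigwedge_{\vec y}\sigma_2^{D^\delta}(\vec y)\Bigr)\ \geq\ \bigwedge_{\vec y} f^\pi\bigl(\sigma_1^{D^\delta}(\vec y),\sigma_2^{D^\delta}(\vec y)\bigr),
\]
i.e.\ that $f^\pi$ commutes with down-directed meets of ideal elements. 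This cannot be obtained by a cofinality argument: if $(y_i)$ is a down-directed family of ideal elements and $z$ is an ideal element with $\bigwedge_i y_i\leq z$, there need be no $i$ with $y_i\leq z$ (already in the powerset of a Stone space a decreasing chain of open sets can have its intersection contained in an open set containing none of them), so one cannot pass from the defining meet of $f^\pi$ over all ideal covers of $\bigwedge_i y_i$ to the meet over the family $(y_i)$. The statement that does the job --- the $\pi$-extension of a dual operator preserves arbitrary non-empty meets in each coordinate, i.e.\ is a \emph{complete} dual operator on $D^\delta$ --- is true, but it is precisely the substantive content of the cited Gehrke--J\'onsson theorem and requires its own nontrivial proof (via completely join- and meet-irreducible elements, or via the topologies on $D^\delta$); it does not follow from ``combining the formula for $f^\pi$ on filter elements with meet-preservation of $f^D$''. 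Until that lemma is stated for the right class of elements and proved, the induction does not close.
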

\section{The structure of the lattice spectrum of an MV-algebra}\label{s:mvdual}

In this section and the next we examine the structure of the lattice spectrum of an MV-algebra. In particular, here we show that the operations $\oplus$ and $\neg$ of an MV-algebra yield dual operations on the lattice spectrum which make it into a topological partial commutative semigroup with an involution (Propositions~\ref{prop:plusprops} and \ref{prop:kleeneinvolution}).
\begin{notation}We  henceforth adopt the   conventions below.
\begin{itemize}
\item $A$ denotes an MV-algebra, and its bounded lattice reduct, assuming it is clear from the context which is meant.
\item $X$ denotes the  set of points of (the underlying distributive lattice of) $A$, in the sense of  Section \ref{s:stone-priestley}. To each point $x\in X$ there is associated a prime ideal $I_x$, and a prime filter $F_x$.  
\item $Y\subseteq X$ denotes the set of points $y \in X$ such that $I_y$ is a prime MV-ideal of $A$.
\item $Z\subseteq Y$ denotes the set of points $z \in Y$ such that $I_z$ is a maximal MV-ideal of $A$.
\item The partial order $\leq$ on $X$ and on its subsets $Y$ and $Z$ is defined as the inclusion order of the corresponding ideals, as in Section \ref{s:stone-priestley}.
\item $X$ is equipped either with the spectral topology $\tau^\downarrow$, or with the co-spectral topology $\tau^\uparrow$, or with the Priestley  topology $\tau^{p}$ and with the inclusion order $\leq$ of the ideals $I_x$, $x \in X$. We will often want to make it clear which topology on $X$ is meant, and thus accordingly write $(X,\tau^\downarrow)$, $(X,\tau^\uparrow)$ and $(X,\tau^p,\leq)$, and similarly for the topologies on the sets $Y$ and $Z$.
\qed
\end{itemize}
\end{notation}

Recall from Remark \ref{r:filters} that the map which sends a (prime) MV-ideal $I$ to the (prime) MV-filter $\neg I$ is a bijection. In terms of canonical extensions, this map is the unique extension $(\neg)^\sigma = (\neg)^\pi$ of the operation $\neg$. For $x \in X$, we can thus define $i\colon X \to X$ by declaring $i(x)$ to be the unique element of $X$ with
 \begin{align}\label{def:i}
I_{i(x)} = \neg F_x, \text{ or equivalently, } F_{i(x)} = \neg I_x.
 \end{align}
Observe that the subspace $Y$ is not closed under the operation $i$: for $y \in Y$, $I_{i(y)}$ is the complement of the prime MV-filter $\neg I_y$, which is a prime ideal, but not necessarily an MV-ideal.
\begin{prop}[Cf. {\cite[Thm. 3.2]{CorFow1977}}]\label{prop:kleeneinvolution}
The map $i \colon X \to X$ is an order-reversing homeomorphism of the Priestley space $(X,\tau^{p},\leq)$ which is its own inverse. Moreover, for all $x \in X$, either $x \leq i(x)$ or $i(x) \leq x$.
\end{prop}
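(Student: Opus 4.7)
The plan is to unpack the definition of $i$ entirely in terms of the bijection between prime filters and prime ideals of the underlying lattice of $A$ induced by the MV-negation, handle the purely order-topological assertions by routine computation, and isolate the genuinely MV-algebraic content in the final ``Kleene'' dichotomy.

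First, I would verify that $i$ is an involution: since $\neg$ is its own inverse on $A$ (Lemma~\ref{lem:minusplusadjunction}(8)), applying $\neg$ twice to the defining equation $I_{i(x)}=\neg F_x$ gives $F_{i(i(x))} = \neg I_{i(x)} = \neg \neg F_x = F_x$, so $i\circ i = \mathrm{id}$. Next, that $i$ is order-reversing: if $x\leq y$, then $I_x\subseteq I_y$, hence $F_x \supseteq F_y$, hence $\neg F_x \supseteq \neg F_y$, i.e.\ $I_{i(x)} \supseteq I_{i(y)}$, so $i(y)\leq i(x)$.

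For continuity on $(X,\tau^p,\leq)$, it suffices to check the preimage of each subbasic clopen $\hat a$, $\hat a^c$ ($a\in A$). For $x\in X$,
\[
x\in i^{-1}(\hat a)\iff a\in F_{i(x)}\iff a\in\neg I_x\iff \neg a\in I_x\iff x\in(\widehat{\neg a})^c,
\]
so $i^{-1}(\hat a)=(\widehat{\neg a})^c$, which is clopen; similarly $i^{-1}(\hat a^c)=\widehat{\neg a}$. Since $i$ is a continuous involution, it is its own continuous inverse, hence a homeomorphism. Together with order-reversal this finishes the first sentence.

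The final sentence is the one requiring real MV-algebraic input, and I expect this to be the main obstacle. I would argue by contradiction: suppose $x\nleq i(x)$ and $i(x)\nleq x$. Unpacking via (\ref{def:i}), $x\nleq i(x)$ means $I_x\not\subseteq \neg F_x$, so there is $a\in I_x$ with $\neg a\notin F_x$, i.e.\ $\neg a\in I_x$; then $a\vee\neg a\in I_x$ because $I_x$ is a lattice ideal. Symmetrically, $i(x)\nleq x$ yields $b\in F_x$ with $\neg b\in F_x$, hence $b\wedge \neg b\in F_x$. Now invoke the Kleene-type inequality of Lemma~\ref{lem:minusplusadjunction}(7), which rephrases as $b\wedge\neg b\leq a\vee\neg a$. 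Since $F_x$ is an upset this forces $a\vee\neg a\in F_x$, contradicting $a\vee\neg a\in I_x = F_x^c$. Hence the two alternatives $x\leq i(x)$ and $i(x)\leq x$ exhaust all points of $X$.
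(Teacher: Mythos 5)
Your proposal is correct and takes essentially the same route as the paper: the paper's one-line proof derives the order-reversing self-inverse homeomorphism from Lemma~\ref{lem:minusplusadjunction}(8) via Stone-Priestley duality and the dichotomy from the De Morgan-Kleene inequality of Lemma~\ref{lem:minusplusadjunction}(7), and your argument is exactly a detailed unpacking of those two steps.
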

\begin{proof}
Both assertions follow at once from the Stone-Priestley Duality Theorem, using  8 and 7 in Lemma \ref{lem:minusplusadjunction}, respectively. See also \cite[Subsec. 6.3.4]{GNV2005}.
\end{proof}
Next consider the  lift $\iplus$ of the operation $\oplus$ to ideal elements of $A^\delta$, as in (\ref{eq:lift2}). We have:
\[ \left(\bigvee I\right) \iplus \left(\bigvee J\right) = \bigvee \{ a \oplus b \ | \ a \in I, b \in J \} = \bigvee \{ c \ | \ \exists a \in I, b \in J : c \leq a \oplus b\}\,.\]
Hence we define $\iplus$ on ideals $I, J\subseteq A$ as follows:\footnote{An operation which is essentially the restriction of (\ref{eq:iplusdef}) to prime  ideals was considered in the context of Wajsberg algebras in \cite{Ma1990}.}
\begin{align}
\label{eq:iplusdef}
I \iplus J &:= \{c \in A \ | \ \exists a \in I, b \in J \text{ such that } c \leq a \oplus b\}.
\end{align}
Similarly, we lift $\ominus$ to filter elements of $(A \times A^\dual)^\delta$.
For $F$ a lattice filter and $I$ a lattice ideal of $A$, define the lattice filter
\begin{align}
\label{eq:iminusdef}
F \iminus I := \{c \in A \ | \ \exists a \in F, b \in I \text{ such that } c \geq a \ominus b\}.
\end{align}
\begin{rem}
Let us emphasize  that the operations $\overline{\oplus}$ and $\overline{\ominus}$ above agree with the lifted operations $\overline{\oplus}$ and $\overline{\ominus}$ on filter and ideal elements of the canonical extension, defined in the preceding Section \ref{sec:canext}, upon using the isomorphisms between the filter elements with lattice filters and the ideal elements with lattice ideals, (\ref{eq:filterbijection}--\ref{eq:idealbijection}). Hence we can apply the methods from Section \ref{sec:canext} and the papers \cite{GePr07a,GePr07b}, as we will now do.\qed
\end{rem}

\begin{prop}\label{prop:additionproperties}
Let $I$ and $J$ be  ideals of the MV-algebra $A$, and let $F$ be a filter of $A$. 
\begin{enumerate}
\item The set $I \iplus J$ is the  ideal generated by the elements $a \oplus b$, for $a \in I$ and $b \in J$.
\item The operation $\iplus$ is commutative, associative, and has the ideal $\{0\}$ as a neutral element.
\item The operation $\iplus$ is monotone: if $J \subseteq J'$, then $I \iplus J \subseteq I \iplus J'$.
\item The  ideal $I \iplus J$ contains $1$ if, and only if, there exists $a \in I$ such that $\neg a \in J$.
\item The  ideal $I$ is an MV-ideal if, and only if, $I \iplus I \subseteq I$.
\item  The set $F \iminus I$ is a filter, and
\[
F \iminus I \subseteq J^c \iff F \subseteq (J \iplus I)^c.
\]
\end{enumerate}
\end{prop}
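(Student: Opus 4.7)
My approach is to treat each of the six statements using essentially only the adjunction $\ominus \dashv \oplus$ and the (anti)monotonicity and (co)continuity properties of $\oplus$ and $\ominus$ collected in Lemma~\ref{lem:minusplusadjunction}, together with the identity $a \oplus \neg a = 1$, which follows directly from the characteristic law~\eqref{mvlaw} by setting $y = 1$ and using $\neg 1 = 0$ and $0 \oplus x = x$. Apart from (6), all parts are reasonably direct.

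For (1), both inclusions are straightforward: the set on the right in \eqref{eq:iplusdef} is visibly a downset containing all $a \oplus b$ with $a \in I, b \in J$, and by Lemma~\ref{lem:minusplusadjunction}(4) (join-preservation of $\oplus$ in each coordinate) any finite join $\bigvee_i (a_i \oplus b_i)$ with $a_i \in I, b_i \in J$ is bounded by $(\bigvee_i a_i) \oplus (\bigvee_i b_i)$, whose factors lie in $I$ and $J$ respectively. Given (1), parts (2) and (3) are immediate: commutativity follows from that of $\oplus$; associativity from chasing two layers of the definition using monotonicity and associativity of $\oplus$; the neutral-element claim from $a \oplus 0 = a$ and downward closure; and monotonicity of $\iplus$ is visible from the definition.

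For (4), the ``if'' direction takes the witness $1 \leq a \oplus \neg a$. For the converse, if $1 \leq a \oplus b$ with $a \in I, b \in J$, then by the adjunction in Lemma~\ref{lem:minusplusadjunction}(1) we have $\neg a = 1 \ominus a \leq b$, hence $\neg a \in J$ since $J$ is a downset. Part (5) is an immediate consequence of (1) and the definition of MV-ideal: closure of the lattice ideal $I$ under the binary operation $\oplus$ is exactly the assertion $I \iplus I \subseteq I$, once (1) is invoked.

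The substance of the proposition is part (6). That $F \iminus I$ is an upset containing $1$ (using $1 \ominus 0 = 1$) is immediate; for closure under $\wedge$, given $c_i \geq a_i \ominus b_i$ with $a_i \in F, b_i \in I$ for $i = 1, 2$, set $a := a_1 \wedge a_2 \in F$ and $b := b_1 \vee b_2 \in I$. Then Lemma~\ref{lem:minusplusadjunction}(5) gives
\[
a \ominus b \;=\; (a \ominus b_1) \wedge (a \ominus b_2) \;\leq\; (a_1 \ominus b_1) \wedge (a_2 \ominus b_2) \;\leq\; c_1 \wedge c_2,
\]
so $c_1 \wedge c_2 \in F \iminus I$. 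Finally, for the displayed equivalence, both inclusions unravel, via the downset/upset properties of $J$ and $F \iminus I$ and the definitions of $\iminus$ and $\iplus$, to the single condition
\[
\forall a \in F,\ \forall b \in I:\quad a \ominus b \notin J,
\]
where the translation on the $F \subseteq (J \iplus I)^c$ side uses the adjunction $a \ominus b \leq e \iff a \leq b \oplus e$ together with the fact that $J$ is a downset. The main obstacle is bookkeeping of filters/ideals and the direction of the inequalities, not any conceptual difficulty.
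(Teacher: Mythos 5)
Your proof is correct, but for the two nontrivial items (2 and 6) it takes a genuinely different route from the paper. The paper does not argue at the level of ideals and filters of $A$ at all: item 2 is obtained by observing that $\oplus^\pi$ is a binary dual operator and invoking the canonicity result (Proposition~\ref{thm:canonicity}) so that commutativity, associativity and the unit law transfer automatically from $A$ to $A^\delta$; item 6 is obtained by lifting the adjunction $\ominus\dashv\oplus$ to the canonical extension (Proposition~\ref{prop:adjunctionlifts}) and then translating $\bigwedge F\leq\bigvee J$ into $F\cap J\neq\emptyset$ via the compactness property of Definition~\ref{d:canext}. Your argument instead verifies everything by hand: associativity by unwinding two layers of the definition with monotonicity of $\oplus$, and item 6 by showing both sides of the displayed equivalence unravel to ``$a\ominus b\notin J$ for all $a\in F$, $b\in I$'' using the element-level adjunction. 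What the paper's route buys is uniformity with its canonical-extension framework --- these facts become instances of general lifting theorems applicable to any dual operator --- whereas your route is more elementary and self-contained, requiring nothing beyond Lemma~\ref{lem:minusplusadjunction} and the definitions (\ref{eq:iplusdef})--(\ref{eq:iminusdef}); the paper itself declares items 1, 3, 4, 5 ``immediate from the definitions,'' and you have supplied those details correctly, including the derivation of $x\oplus\neg x=1$ from (\ref{mvlaw}). One small slip: in the closure-under-$\wedge$ argument for $F\iminus I$, the identity $a\ominus(b_1\vee b_2)=(a\ominus b_1)\wedge(a\ominus b_2)$ is the join-to-meet reversal in the second coordinate, which is item 3 of Lemma~\ref{lem:minusplusadjunction}, not item 5 (item 5 concerns meets in the second coordinate); the mathematics is unaffected.
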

\begin{proof}
The second item is a consequence of Proposition \ref{thm:canonicity}, upon noting that the $\pi$-extension $\oplus^{\pi}$ of $\oplus$ (see (\ref{eq:pi})) is a binary dual operator by Lemma \ref{lem:minusplusadjunction} and Proposition \ref{prop:adjunctionlifts}.  Items 1, 3, 4, and 5 are immediate from the definitions.
For item 6, apply Proposition \ref{prop:adjunctionlifts} to the   operations $f := \ominus$ and $g := \oplus$, which  form an adjoint pair by Lemma \ref{lem:minusplusadjunction}.1. Then:
\[ \left(\bigwedge F\right) \iminus \left(\bigvee I\right) \leq \bigvee J \iff \bigwedge F \leq \left(\bigvee J\right) \iplus \left(\bigvee I\right).\]
Since $\bigwedge F \leq \bigvee I$ iff $F \cap I \neq \emptyset$ by the compactness property of Definition \ref{d:canext}, a straightforward rewriting completes the proof.
\end{proof}
The following proposition is central to the paper \cite{GePr07a}; cf. Lemma 4.3 and Theorem 4.4 therein.
\begin{prop}\label{prop:primeplusidealcanext}
Let $A$ be an MV-algebra. The following hold:
\begin{enumerate}
\item For $x \in M^\infty(A^\delta)$, $y \in I(A^\delta)$, we have $x \iplus y \in M^\infty(A^\delta) \cup \{1\}$. 
\item For $j \in J^\infty(A^\delta)$, $x \in M^\infty(A^\delta)$, we have $j \iminus x \in J^\infty(A^\delta) \cup \{0\}$.
\end{enumerate}
\end{prop}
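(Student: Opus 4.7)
My plan is to translate both statements into assertions about lattice ideals and filters of $A$, using the bijections (\ref{eq:filterbijection})--(\ref{eq:idealbijection}) between filter/ideal elements of $A^\delta$ and lattice filters/ideals of $A$, together with the identifications of $M^\infty(A^\delta)$ with the set of prime lattice ideals of $A$ and of $J^\infty(A^\delta)$ with the set of prime lattice filters. Under these identifications, and using the descriptions (\ref{eq:iplusdef})--(\ref{eq:iminusdef}) of the lifted operations, item 1 becomes the claim that if $I$ is a prime lattice ideal and $J$ is an arbitrary lattice ideal of $A$, then $I \iplus J$ is either all of $A$ or a prime lattice ideal; item 2 becomes the claim that if $F$ is a prime lattice filter and $I'$ is a lattice ideal of $A$, then $F \iminus I'$ is either the improper filter (i.e.\ contains $0$) or a prime lattice filter.

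For item 1: if $I \iplus J = A$ we obtain the top element $1$. Otherwise, assume $c \wedge d \in I \iplus J$, so $c \wedge d \leq a \oplus b$ for some $a \in I$, $b \in J$. The adjunction between $\ominus$ and $\oplus$ (Lemma~\ref{lem:minusplusadjunction}.1) rewrites this as $(c \wedge d) \ominus b \leq a$, and meet-distributivity of $\ominus$ in the first coordinate (Lemma~\ref{lem:minusplusadjunction}.5) then gives $(c \ominus b) \wedge (d \ominus b) \leq a \in I$. Since $I$ is prime, without loss of generality $c \ominus b \in I$; setting $a' := c \ominus b$, the adjunction yields $c \leq a' \oplus b$, whose right-hand side is a generator of $I \iplus J$. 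Since $I \iplus J$ is a downset, $c \in I \iplus J$, and hence $I \iplus J$ is prime.

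For item 2: if $0 \in F \iminus I'$ we obtain the bottom element $0$. Otherwise, assume $c \vee d \in F \iminus I'$, so $c \vee d \geq a \ominus b$ for some $a \in F$, $b \in I'$. The adjunction rewrites this as $a \leq b \oplus (c \vee d)$, and join-distributivity of $\oplus$ in its second coordinate (Lemma~\ref{lem:minusplusadjunction}.4) yields $a \leq (b \oplus c) \vee (b \oplus d)$. Since $F$ is an upset containing $a$, the join lies in $F$, and primality of $F$ gives, say, $b \oplus c \in F$. Then $(b \oplus c) \ominus b$ is a defining element of $F \iminus I'$, and the adjunction shows $(b \oplus c) \ominus b \leq c$. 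Because $F \iminus I'$ is an upset, $c \in F \iminus I'$, proving primality.

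The core point (rather than a genuine obstacle) is the recognition that the adjunction between $\oplus$ and $\ominus$, together with the two distribution laws in Lemma~\ref{lem:minusplusadjunction}.4--5, is exactly what one needs to transfer primality across the lifted operations. Observe additionally that primality of $J$ in item 1 and of $I'$ in item 2 are not used in the arguments, so the conclusions in fact hold in these slightly stronger forms.
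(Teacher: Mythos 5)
Your proof is correct, but it takes a genuinely different route from the paper's. The paper proves this proposition by observing that $\oplus$ and $\ominus$ are \emph{double operators} (they preserve binary joins and binary meets in each coordinate, up to the order flip in the second coordinate of $\ominus$) and then invoking the general machinery of \cite[Thm.\ 4.4]{GePr07a} on how $\sigma$- and $\pi$-extensions of double operators and their adjoints act on completely join- and meet-irreducible elements. You instead translate everything to lattice ideals and filters via (\ref{eq:filterbijection})--(\ref{eq:idealbijection}) and (\ref{eq:iplusdef})--(\ref{eq:iminusdef}) and verify primality directly: in item 1 you use the adjunction together with meet-preservation of $\ominus$ in its first coordinate (Lemma~\ref{lem:minusplusadjunction}.5) to split $c\wedge d\leq a\oplus b$ into $(c\ominus b)\wedge(d\ominus b)\leq a$ and push primality of $I$ through; in item 2 you use join-preservation of $\oplus$ in its second coordinate (Lemma~\ref{lem:minusplusadjunction}.4) symmetrically. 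Each step checks out, including the small verifications that $c\leq (c\ominus b)\oplus b$ and $(b\oplus c)\ominus b\leq c$ via the adjunction, and the identifications of $1$ and $0$ in $A^\delta$ with the improper ideal and filter. What your argument buys is self-containment: it isolates exactly which halves of the double-operator property are needed for each item, avoiding the external citation; what the paper's approach buys is generality, since the same cited theorem covers all double quasi-operators at once. Your closing observation is also sound --- in item 2 your argument needs no primality of the ideal argument, so the conclusion holds for any $x\in I(A^\delta)$, a mild strengthening consistent with the general theory (for item 1 the hypothesis $y\in I(A^\delta)$ is already arbitrary, so nothing is gained there).
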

\begin{proof}Adopting the terminology of \cite{GePr07a},  say that an operation $h \colon A \times A \to A$ is a {\it double operator} if it preserves binary joins and binary meets in each coordinate. By Lemma~\ref{lem:minusplusadjunction}, $\oplus\colon A \times A \to A$ and  $\ominus \colon  A \times A^\dual \to A$ are double operators.
For the first item, set $h(a,b) := b \ominus a$. Then  $h(a,0) = 0$ for all $a \in A$. Writing $C := A^\delta$, the right upper adjoint of $h^\sigma$ is the operation $l \colon C \times C \to C$, which sends $(u,v)$ to $v \oplus^\pi u$, by Proposition~\ref{prop:adjunctionlifts}. By the proof of \cite[Thm. 4.4]{GePr07a}, we get in particular that $l$ maps an element $(y,x) \in F(C^\dual) \times M^\infty(C) = I(C) \times M^\infty(C)$ into $M^\infty(C) \cup \{1\}$, so that indeed $l(y,x) = x \oplus^\pi y \in M^\infty(A^\delta) \cup \{1\}$, as required. The proof of the second item is dual, and uses that $\oplus$ is a double operator of which $\ominus$ is a lower adjoint (Lemma \ref{lem:minusplusadjunction}).
\end{proof}
\begin{cor}\label{cor:primeplusideal}
Let $x \in X$ and let $J$ be an ideal of $A$. Then $I_x \iplus J$ is a  prime ideal of $A$ if, and only if, $I_{i(x)} \supseteq J$. In particular,  given $x, y \in X$,  the ideal $I_x \iplus I_y$ of $A$ is prime if, and only if, $i(x) \geq y$.
\end{cor}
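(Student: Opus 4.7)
The strategy is to extract the dichotomy from Proposition~\ref{prop:primeplusidealcanext}(1), and then use Proposition~\ref{prop:additionproperties}(4) together with the definition~(\ref{def:i}) of $i$ to decide which of the two alternatives holds.

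First I would note that under the isomorphisms~(\ref{eq:idealbijection}), the lattice ideal $I_x \iplus J$ of $A$ corresponds to the element $(\bigvee I_x) \iplus (\bigvee J) \in A^\delta$. Since $I_x$ is a prime lattice ideal, $\bigvee I_x$ lies in $M^\infty(A^\delta)$, while $\bigvee J \in I(A^\delta)$. Proposition~\ref{prop:primeplusidealcanext}(1) therefore yields
\[
\left(\bigvee I_x\right) \iplus \left(\bigvee J\right) \in M^\infty(A^\delta) \cup \{1\}.
\]
Via the bijection between prime lattice ideals of $A$ and $M^\infty(A^\delta)$, this translates into the dichotomy: either $I_x \iplus J$ is a prime lattice ideal of $A$, or $I_x \iplus J = A$.

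Next I would decide which case we are in. By Proposition~\ref{prop:additionproperties}(4), $I_x \iplus J = A$ if and only if there exists $a \in I_x$ with $\neg a \in J$, i.e.\ $\neg I_x \cap J \neq \emptyset$. But by~(\ref{def:i}) we have $\neg I_x = F_{i(x)}$, so
\[
I_x \iplus J = A \ \liff \  F_{i(x)} \cap J \neq \emptyset  \ \liff \  J \not\subseteq I_{i(x)},
\]
where the last equivalence uses that $I_{i(x)} = F_{i(x)}^c$. Combining this with the dichotomy above gives exactly: $I_x \iplus J$ is a prime ideal of $A$ iff $J \subseteq I_{i(x)}$, which is the main statement.

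Finally, the ``in particular'' clause is obtained by specialising $J := I_y$ and invoking~(\ref{eq:orderonX}): the inclusion $I_y \subseteq I_{i(x)}$ is by definition the relation $y \leq i(x)$. There is no real obstacle here; the only subtlety is being careful that the $\iplus$ defined on ideals in~(\ref{eq:iplusdef}) really does correspond, via the isomorphisms~(\ref{eq:filterbijection}--\ref{eq:idealbijection}), to the $\pi$-extension $\oplus^\pi$ used in Proposition~\ref{prop:primeplusidealcanext} — but this is exactly the content of the remark preceding Proposition~\ref{prop:additionproperties}.
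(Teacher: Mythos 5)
Your proposal is correct and follows essentially the same route as the paper's own proof: both rest on Proposition~\ref{prop:primeplusidealcanext}(1) for the dichotomy (prime ideal or all of $A$), Proposition~\ref{prop:additionproperties}(4) to detect which case occurs, and the definition~(\ref{def:i}) of $i$ to rephrase the condition as $J \subseteq I_{i(x)}$. Your version is merely a bit more explicit about the passage between lattice ideals and ideal elements of $A^\delta$, which the paper delegates to the remark preceding Proposition~\ref{prop:additionproperties}.
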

\begin{proof}
By Proposition~\ref{prop:primeplusidealcanext}, if $1 \not\in I_x \iplus J$, then $I_x \iplus J$ is a  prime ideal. By  Proposition~\ref{prop:additionproperties}.4, we have  $1 \not\in I_x \iplus J$ if, and only if, there is no $a \in I_x$ such that $\neg a \in J$  iff $\overline{(\neg)}I_x \subseteq J^c$. This is equivalent to $I_{i(x)} \supseteq J$ by the definition (\ref{def:i}) of $i(x)$.
\end{proof}
We now define a partial binary operation $+$ on $X$ with domain 
\[\dom(+) := \{(x,y) \in X^2 \ | \ i(x) \geq y \} = \{(x,y) \in X^2 \ | \ 1 \not\in I_x \iplus I_y\}.\]
For $(x,y) \in \dom(+)$, we let $x + y$ be the unique element of $X$ such that 
\begin{align}\label{def:+}
I_{x+y} = I_x \iplus I_y.
\end{align}
Note that, by Proposition \ref{prop:additionproperties}, $1\not\in I_y \iplus I_y = I_y$ for $y \in Y$, so that $i(y) \geq y$ for all $y \in Y$. Hence $I_{y}+I_{y}$ is defined for all $y \in Y$.

\begin{rem}\label{r:inotneeded} We observe in passing that the dual  $i\colon X \to X$ of negation as in (\ref{def:i}) is definable from  $+\colon \dom(+)\to X$. Indeed, one  has %
\[
i(x) = \max{ \{ y \in X \ | \ (x,y) \in \dom(+)\}}.
\]
Strictly speaking, therefore, carrying along the structure $i\colon X \to X$ on the dual space is not necessary, but it will turn out to be convenient to have.\qed
\end{rem}

Importantly, we obtain:
\begin{prop}[Dual structure]\label{prop:plusprops}
The structure $(X, \tau^\uparrow, +)$ is a topological partial commutative semigroup that is translation-invariant with respect to the specialization order, and whose set of idempotent elements is the MV-spectrum $Y$ of $A$. More precisely, for all $x, x', x'' \in X$, the following hold.
\begin{enumerate}
\item \textup{(Commutativity.)} If $x + x'$ is defined, then $x' + x$ is defined, and $x + x' = x' + x$.
\item \textup{(Associativity.)} If $x + x'$ and $(x + x') + x''$ are defined, then $x + (x' + x'')$ is defined, and $(x + x') + x'' = x + (x' + x'')$.
\item \textup{(Translation-invariance.)} If $x' \leq x''$ and $x + x''$ is defined, then $x + x'$ is defined, and $x + x' \leq x + x''$.
\item \textup{(Idempotents are Priestley-closed.)} The ideal $I_x$ is a prime MV-ideal if, and only if, $x + x$ is defined and $x + x = x$. Hence 
\begin{align*}
Y&=\{y \in X \mid (y,y) \in \dom(+) \text{\textup{ and }}  y + y \leq y\}\\ &=\{y \in X \mid (y,y) \in \dom(+)  \text{\textup{ and }} y + y = y\},
\end{align*}
 and $Y$ is closed in $(X,\tau^{p})$.
\item  \textup{(Continuity.)} The function $+ : \dom(+) \to X$ is continuous with respect to the topology $\tau^\uparrow$ on $X$ and the subspace topology inherited by $\dom(+)$ from the product space $(X,\tau^\uparrow)\times (X,\tau^\uparrow)$.
\item \textup{(Closed domain.)} The set $\dom(+)\subseteq X^{2}$ is closed in the product space $(X,\tau^\uparrow) \times (X,\tau^\uparrow)$.
\end{enumerate}
\end{prop}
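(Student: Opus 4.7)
The plan is to verify each of the six items more or less in order, always translating the assertion on $X$ into a statement about the operation $\iplus$ on (prime) ideals of $A$, and then invoking Proposition~\ref{prop:additionproperties} together with Corollary~\ref{cor:primeplusideal} and Proposition~\ref{prop:kleeneinvolution}.

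First, for item 1, I would note that $i \colon X \to X$ is an order-reversing involution (Proposition~\ref{prop:kleeneinvolution}), so $i(x) \geq y$ is equivalent to $x \leq i(y)$, and hence $\dom(+)$ is symmetric. The identity $x+x' = x'+x$ then follows from the commutativity of $\iplus$ on ideals (Proposition~\ref{prop:additionproperties}.2), since the defining equation $I_{x+x'}=I_{x}\iplus I_{x'}$ determines $x+x'$ uniquely. Item 2 is handled the same way: once both sides are defined, associativity of $\iplus$ on ideals gives $I_{(x+x')+x''}=I_{x+(x'+x'')}$; the hypothesis that the left-hand sides are defined is what guarantees that $1$ stays out of the relevant iterated sum, so $x+(x'+x'')$ is well-defined too.

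For item 3, I would use that $x' \leq x''$ means $I_{x'} \subseteq I_{x''}$; hence, if $x+x''$ is defined, then $1 \notin I_x \iplus I_{x''} \supseteq I_x \iplus I_{x'}$ by monotonicity of $\iplus$ (Proposition~\ref{prop:additionproperties}.3), so $x+x'$ is also defined and $I_{x+x'} \subseteq I_{x+x''}$, i.e.\ $x+x' \leq x+x''$. For item 4, Proposition~\ref{prop:additionproperties}.5 gives that $I_x$ is an MV-ideal iff $I_x \iplus I_x \subseteq I_x$; combined with the trivial inclusion $I_x \subseteq I_x \iplus I_x$ (take the summand $0$), this gives $I_{x+x}=I_x$, i.e.\ $x+x=x$, and conversely. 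To see that $Y$ is closed in $(X,\tau^p)$, I would rewrite the characterization
\[
y \in Y \iff \forall a,b \in A \colon y \in \widehat{a\oplus b} \Rightarrow y \in \hat a \cup \hat b,
\]
yielding $Y = \bigcap_{a,b \in A}\bigl(\hat a \cup \hat b \cup (\widehat{a\oplus b})^c\bigr)$, an intersection of $\tau^p$-clopen sets.

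For items 5 and 6 I would work with the subbasis $\{\hat c^{\,c} \mid c \in A\}$ for $\tau^\uparrow$. Using the explicit formula (\ref{eq:iplusdef}), we have
\[
+^{-1}(\hat c^{\,c}) \;=\; \bigl\{(x,y) \in \dom(+) \mid c \in I_x \iplus I_y\bigr\} \;=\; \bigcup_{\substack{a,b \in A \\ c \leq a \oplus b}} (\hat a^{\,c} \times \hat b^{\,c}) \cap \dom(+),
\]
which is open in the subspace topology induced from $(X,\tau^\uparrow)^2$, giving item 5. For item 6, the same formula applied with $c=1$ yields
\[
\dom(+)^c \;=\; \bigcup_{\substack{a,b \in A \\ a \oplus b = 1}} \hat a^{\,c} \times \hat b^{\,c},
\]
which is open in $(X,\tau^\uparrow) \times (X,\tau^\uparrow)$, so $\dom(+)$ is closed.

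The main subtlety I anticipate is being careful about \emph{which} topology governs \emph{which} statement: closedness of $Y$ is asserted in the Priestley topology $\tau^p$, whereas continuity and closedness of $\dom(+)$ are stated for the co-spectral topology $\tau^\uparrow$. The characterization of $\dom(+)$ via the non-membership of $1$ in $I_x \iplus I_y$ (Proposition~\ref{prop:additionproperties}.4) and Corollary~\ref{cor:primeplusideal} are the key bridges; once these are in place the rest reduces to elementary manipulations with the subbasis of $\tau^\uparrow$ and the monotonicity/commutativity/associativity of $\iplus$ already established in Proposition~\ref{prop:additionproperties}.
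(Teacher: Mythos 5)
Your proposal is correct, and for items 1--3, 5, and the characterization of idempotents in item 4 it coincides with the paper's argument (the paper simply cites Proposition~\ref{prop:additionproperties} for 1--3 and writes down the same preimage formula for item 5). You deviate on the two topological closedness claims, in both cases replacing a citation-based argument by a direct subbasis computation. For the closedness of $Y$ in $(X,\tau^p)$ the paper invokes Propositions~\ref{prop:YasKPrin} and~\ref{prop:quotientsubspace}, i.e.\ realizes $Y$ as the closed subspace $S_\sigma$ dual to the lattice quotient $A/\sigma$; your identity $Y=\bigcap_{a,b\in A}\bigl(\hat a\cup\hat b\cup(\widehat{a\oplus b})^{\,c}\bigr)$ is a correct and more self-contained substitute, since a prime lattice ideal is already a downset containing $0$, so being an MV-ideal reduces to closure under $\oplus$. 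For item 6 the paper uses that $\leq$ is closed in $(X,\tau^\downarrow)\times(X,\tau^\uparrow)$ together with Proposition~\ref{prop:kleeneinvolution} to conclude that $\dom(+)=\{(x,y)\mid i(x)\geq y\}$ is closed; your formula $\dom(+)^{c}=\bigcup_{a\oplus b=1}\hat a^{\,c}\times\hat b^{\,c}$ (the $c=1$ instance of your item-5 computation, without the intersection with $\dom(+)$) is again correct and arguably more elementary, as it avoids both the involution $i$ and the general Priestley-space fact about the order being closed; what the paper's route buys is conceptual economy, reusing structure it has already established and making transparent why the domain is order-theoretically determined. One small point worth making explicit in item 2: to know that $x'+x''$ itself is defined you should note $I_{x'}\iplus I_{x''}=\{0\}\iplus(I_{x'}\iplus I_{x''})\subseteq I_x\iplus(I_{x'}\iplus I_{x''})=I_{(x+x')+x''}$, which excludes $1$ by hypothesis; your phrasing gestures at this but does not spell it out (the paper is equally terse here).
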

\begin{proof}
The first three items follow directly from Proposition~\ref{prop:additionproperties}. 
For item 4, suppose $I_x$ is a prime MV-ideal. Then $I_x \iplus I_x \subseteq I_x$ by  Proposition~\ref{prop:additionproperties}.5, so in particular $x + x$ is defined. It follows that $x + x \leq x$. On the other hand, $x \leq x + x$ always holds since $I_x = I_x \iplus 0 \subseteq I_x \iplus I_x$ by Proposition~\ref{prop:additionproperties}. Thus we infer $x + x = x$. The converse direction is clear from  Proposition~\ref{prop:additionproperties}.5. The fact that $Y$ is closed in $(X, \tau^{p})$ is an immediate consequence of Propositions \ref{prop:YasKPrin} and  \ref{prop:quotientsubspace}.

For item 5, pick $a \in A$. Note that, for $(x,x') \in \dom(+)$, we have $a \in I_{x + x'}$ iff there exist $b \in I_x$ and $c \in I_{x'}$ such that $a \leq b \oplus c$, so
\[ +^{-1}(\hat{a}^{\, c}) = \dom(+) \cap \bigcup \{ \hat{b}^{\, c} \times \hat{c}^{\, c} \ | \ b, c \in A : a \leq b \oplus c \},\]
which is clearly open in $\dom(+) \subseteq (X,\tau^\uparrow)^2$.

For the last item, recall from Section \ref{s:stone-priestley} that $\leq$ is a closed subset of $(X,\tau^\downarrow) \times (X,\tau^\uparrow)$, because $(X,\tau^p,\leq)$ is a Priestley space. Since $i$ is an order-reversing continuous function by Proposition~\ref{prop:kleeneinvolution}, $\dom(+) = \{(x,y) \ | \ i(x) \geq y\}$ is closed in $(X,\tau^\uparrow) \times (X,\tau^\uparrow)$.
\end{proof}
\begin{notation}
Henceforth, given $x,x',x''\in X$, we write  `\,$x + x' = x''$\,', `\,$x' \leq x' + x''$\,', and so forth, to mean `both sides of the (in)equality are defined, and the (in)equality holds'.
\end{notation}
\section{The decomposition of the lattice spectrum: the map $k$}\label{sec:decomp}
In this section we construct the map $k\colon Y \to X$ that is central to our results, and establish some of its order-topological properties, which naturally arises here (and, implicitly, in \cite{GePr07a}) from the canonical extension of an MV-algebra. In particular, we obtain an \'etale-like decomposition of $X$ into simple fibres in Proposition \ref{prop:chainusingk}, and the crucial  Interpolation Lemma \ref{lem:interpolation}. 

By way of extension of Proposition \ref{prop:quotientsubspace}, MV-algebraic quotients of $A$ may be  identified dually using the operation $\iplus$ on ideals defined in (\ref{eq:iplusdef}).
\begin{prop}\label{prop:quotientdually}
Let $J$ be an MV-ideal. The Priestley lattice spectrum of the quotient MV-algebra $A/J$ is homeomorphic to the Priestley-closed subspace of $X$ defined by
\begin{align*} 
S_J := \{ x \in X \ | \ I_x \iplus J \subseteq I_x\}.
\end{align*}
Moreover, if $J = I_y$ for some $y \in Y$, then $S_{I_y} = \{x \in X \ | \ x + y \leq x\}$ is totally ordered.
\end{prop}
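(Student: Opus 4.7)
The plan is to reduce the first claim to Proposition~\ref{prop:quotientsubspace} applied to the lattice congruence $\theta_J$ induced by the MV-ideal $J$, then identify $S_{\theta_J}$ with $S_J$ by two short MV-algebraic computations. The second claim follows from the first together with a lattice-theoretic observation about chains.

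\emph{Step 1 (congruence from $J$).} By the bijection between MV-ideals and MV-congruences \cite[Prop.~1.2.6]{cdm}, let $\theta_J$ denote the MV-congruence (hence lattice congruence) associated to $J$; concretely, $a\,\theta_J\,b$ iff $a\ominus b\in J$ and $b\ominus a\in J$. By Proposition~\ref{prop:quotientsubspace}, the Priestley dual space of the underlying lattice of $A/J$ is Priestley-homeomorphic to the closed subspace
\[
S_{\theta_J}=\{x\in X\mid \forall (a,b)\in\theta_J:\ a\in I_x\liff b\in I_x\}.
\]
It therefore suffices to prove that $S_{\theta_J}=S_J$.

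\emph{Step 2 (the identification $S_{\theta_J}=S_J$).} For the inclusion $S_J\subseteq S_{\theta_J}$, suppose $x\in S_J$ and $a\,\theta_J\,b$. If $a\in I_x$, then since $b\ominus a\in J$ and, by adjunction (Lemma~\ref{lem:minusplusadjunction}.1), $b\leq a\oplus(b\ominus a)$, we obtain $b\in I_x\iplus J\subseteq I_x$; the converse implication is symmetric. For the inclusion $S_{\theta_J}\subseteq S_J$, take $x\in S_{\theta_J}$, $a\in I_x$ and $b\in J$. The adjunction gives $(a\oplus b)\ominus a\leq b\in J$, and clearly $a\ominus(a\oplus b)=0\in J$; hence $a\,\theta_J\,(a\oplus b)$, and therefore $a\oplus b\in I_x$. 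By Proposition~\ref{prop:additionproperties}.1, this suffices to conclude $I_x\iplus J\subseteq I_x$, i.e., $x\in S_J$.

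\emph{Step 3 (totality when $J=I_y$, $y\in Y$).} Suppose $J=I_y$ with $y\in Y$. By Proposition~\ref{prop:primechain}, the quotient $A/I_y$ is totally ordered. In a chain, every proper downset is a prime lattice ideal, and the set of proper downsets is totally ordered by inclusion; hence the Priestley dual of the underlying lattice of $A/I_y$ is totally ordered. Since the homeomorphism in Step~1 is a Priestley isomorphism and so preserves and reflects the order, $S_{I_y}$ is totally ordered.

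\emph{Step 4 (the explicit form of $S_{I_y}$).} Since $0\in I_y$, we always have $I_x\subseteq I_x\iplus I_y$; therefore the condition $I_x\iplus I_y\subseteq I_x$ is equivalent to $I_x\iplus I_y=I_x$, which forces $1\notin I_x\iplus I_y$, i.e.\ $(x,y)\in\dom(+)$. Then by the definition (\ref{def:+}) of $x+y$, the inclusion $I_x\iplus I_y\subseteq I_x$ becomes $I_{x+y}\subseteq I_x$, that is, $x+y\leq x$. Under the convention that an inequality involving $x+y$ asserts its definedness, we obtain
\[
S_{I_y}=\{x\in X\mid x+y\leq x\}.
\]
The only subtle point is the Priestley-level identification $S_{\theta_J}=S_J$ in Step~2; everything else is straightforward given the results already assembled.
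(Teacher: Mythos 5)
Your proposal is correct and follows essentially the same route as the paper: reduce to Proposition~\ref{prop:quotientsubspace} via the MV-congruence of $J$, verify $S_{\theta_J}=S_J$ by the same two computations with $\ominus$ and $\oplus$, and deduce totality of $S_{I_y}$ from Proposition~\ref{prop:primechain} plus the fact that prime ideals of a chain form a chain. Your Step 4 spelling out why $1\notin I_x\iplus I_y$ (so that $x+y$ is defined) is slightly more careful than the paper's ``immediate from the definition of $+$,'' but the content is identical.
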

\begin{proof}
Let us write $\theta$ for the MV-congruence on $A$ which is the kernel of $A \onto A/J$. Since $A/J$ is in particular a lattice quotient of $A$, we may apply Proposition~\ref{prop:quotientsubspace}: the lattice spectrum of $A/J$ is homeomorphic to the closed subspace 
$$S_\theta = \{x \in X \ | \ \forall (a,b) \in \theta : (a \in I_x \liff b \in I_x)\}$$ 
with the subspace topology and the restricted order. We show that $S_J = S_\theta$.

Suppose that $x \in S_\theta$. Let $a \in I_x$ and $c \in J$. Then $(a, a \oplus c) \in \theta$ since $(0,c) \in \theta$ and $\theta$ is an MV-congruence. Since $x \in S_\theta$ and $a \in I_x$, we get $a \oplus c \in I_x$. So $\{a \oplus c \ | \ a \in I_x, c \in J\} \subseteq I_x$, and therefore the lattice ideal generated by this set is contained in $I_x$, because $I_x$ is a lattice ideal. So $I_x \iplus J \subseteq I_x$, i.e., $x \in S_J$. Conversely, suppose that $x \in S_J$, i.e., $I_x \iplus J \subseteq I_x$. We show that $x \in S_\theta$. Let $(a,b) \in \theta$, and suppose that $a \in I_x$. Since $(a,b) \in \theta$ we have $(b \ominus a,0) \in \theta$, so $b \ominus a \in J$. Hence, $a \vee b = a \oplus (b \ominus a) \in I_x \iplus J \subseteq I_x$, so $a \vee b \in I_x$. Hence we get in particular $b \in I_x$, because $I_x$ is a downset. Since $\theta$ is symmetric, $b \in I_x$ implies $a \in I_x$, so $x \in S_\theta$. We conclude that the lattice spectrum of $A/J$ is indeed homeomorphic to $S_J$. 

To prove the second assertion, recall from Proposition~\ref{prop:primechain} that if $J = I_y$ is prime, then $A/I_y$ is totally ordered. Hence, $S_{I_y}$ is also totally ordered, because by a standard exercise the prime ideals of a chain form a chain. The fact that $S_{I_y} = \{x \in X \ | \ x + y \leq x\}$ is immediate from the definition of $+$.
\end{proof}
\begin{notation}Henceforth, if $y \in Y$, we write $C_y= \{x \in X \ | \ x + y \leq x\}$ for the chain $S_{I_y} \subseteq X$.
\end{notation}
Our next aim is to show that, for a fixed $x \in X$, there exists a \emph{maximum} element $y \in Y$ such that $x + y \leq x$; this will  lead us to the definition of a map $k\colon X \to Y$ (Definition~\ref{def:k}). We prove this through canonical extensions; we use the bijection $\kappa\colon J^\infty(A^\delta)\to M^\infty(A^\delta)$ defined in (\ref{eq:kappadef}). 
\begin{lem}\label{prop:largestprimecanext}
 For any $x \in M^\infty(A^\delta)$, the set $$\{ u \in A^\delta \ | \ x \oplus^\pi u \leq x\}$$ is closed under $\oplus^\pi$, and has a maximum, which is given by $\kappa(\kappa^{-1}(x) \iminus x) \in M^\infty(A^\delta)$.
\end{lem}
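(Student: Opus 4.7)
The plan is to reduce the claim to a short chain of biconditionals built from the canonical-extension adjunction and the defining property~(\ref{eq:kappaprop}) of $\kappa$. Exactly as in the proof of Proposition~\ref{prop:additionproperties}.6, I would start by invoking Proposition~\ref{prop:adjunctionlifts} on the lattice-level adjunction $\ominus \dashv \oplus$ of Lemma~\ref{lem:minusplusadjunction}.1 to obtain, for all $u,v,w \in A^\delta$,
\[ u \ominus^\sigma v \leq w \iff u \leq v \oplus^\pi w. \]
Along the way I would record the auxiliary facts that $\oplus^\pi$ is monotone, associative, and has $0$ as neutral element; these follow from Lemma~\ref{lem:minusplusadjunction} together with Proposition~\ref{thm:canonicity}, since $\oplus$ is a binary dual operator.

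Next, I would set $j^\ast := \kappa^{-1}(x) \in J^\infty(A^\delta)$ and $j' := j^\ast \iminus x$, identifying $j'$ with $j^\ast \ominus^\sigma x$ via the agreement of $\iminus$ and $\ominus^\sigma$ on filter/ideal elements noted in the remark preceding Proposition~\ref{prop:additionproperties}. Proposition~\ref{prop:primeplusidealcanext}.2 then places $j'$ in $J^\infty(A^\delta) \cup \{0\}$; I can exclude $j' = 0$ because the adjunction would otherwise give $j^\ast \leq x \oplus^\pi 0 = x = \kappa(j^\ast)$, contradicting~(\ref{eq:kappaprop}). Thus $j' \in J^\infty(A^\delta)$ and $\kappa(j') \in M^\infty(A^\delta)$ is well-defined.

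The heart of the proof is then the single biconditional, valid for every $w \in A^\delta$,
\[ x \oplus^\pi w \leq x \iff w \leq \kappa(j'), \]
which simultaneously places $\kappa(j')$ in the set $\{u \mid x \oplus^\pi u \leq x\}$ and identifies it as the maximum. To prove it I would chain three equivalences: $x \oplus^\pi w \leq x$ is equivalent to $j^\ast \nleq x \oplus^\pi w$ by~(\ref{eq:kappaprop}) applied to $j^\ast$ (using $x = \kappa(j^\ast)$); this is equivalent by the lifted adjunction to $j^\ast \ominus^\sigma x \nleq w$, that is, to $j' \nleq w$; and this in turn is equivalent to $w \leq \kappa(j')$ by~(\ref{eq:kappaprop}) applied to $j'$.

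Closure under $\oplus^\pi$ then comes for free from the description of the set as a principal downset: given $u, v \leq \kappa(j')$, associativity and monotonicity of $\oplus^\pi$ yield $x \oplus^\pi (u \oplus^\pi v) = (x \oplus^\pi u) \oplus^\pi v \leq x \oplus^\pi v \leq x$, so $u \oplus^\pi v$ lies in the set. The main obstacle I anticipate is not a deep one but the bookkeeping needed to identify $\iplus$, $\iminus$ with $\oplus^\pi$, $\ominus^\sigma$ on the filter/ideal elements actually appearing in the argument, and to discharge the degenerate case $j' = 0$ so that $\kappa(j')$ makes sense.
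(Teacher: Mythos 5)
Your proposal is correct and follows essentially the same route as the paper's proof: the same chain of equivalences via (\ref{eq:kappaprop}) and the lifted adjunction of Proposition~\ref{prop:adjunctionlifts}, the same exclusion of the degenerate case $\kappa^{-1}(x)\iminus x = 0$, and the same associativity/monotonicity argument for closure under $\oplus^\pi$. The only cosmetic difference is that you derive the maximum first and present closure as a consequence, whereas the paper proves closure directly at the outset; the substance is identical.
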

\begin{proof} 
To see that the set is closed under $\oplus^\pi$, notice that if $u, v$ are in the set, then 
$$x \oplus^\pi (u \oplus^\pi v) = (x \oplus^\pi u) \oplus^\pi v \leq x \oplus^\pi v \leq x,$$
using that $\oplus^\pi$ is associative by Proposition~\ref{thm:canonicity}, and order-preserving by Proposition~\ref{prop:additionproperties}.3 and the definition of $\oplus^\pi$.

Let $j := \kappa^{-1}(x) \in J^\infty(A^\delta)$. We first use Proposition~\ref{prop:primeplusidealcanext} to show that $j \iminus x \in J^\infty(A^\delta)$. To obtain a contradiction, if we had $j \iminus x = 0$ in $A^\delta$, then by adjunction (Lemma \ref{lem:minusplusadjunction} and Proposition \ref{prop:adjunctionlifts}) and the observation that $0$ is a neutral element for $\iplus$, we would conclude $j \leq x \iplus 0 = x = \kappa(j)$, which contradicts the definition of $\kappa(j)$. So $j \iminus x \in J^\infty(A^\delta)$. We now note, for any $u \in A^\delta$,
\begin{align*}
x \oplus^\pi u \leq x &\iff j \nleq x \oplus^\pi u &(\text{using (\ref{eq:kappaprop}) and $x = \kappa(j)$})\\
&\iff j \iminus x \nleq u &(\text{Proposition~\ref{prop:adjunctionlifts}}) \\
&\iff u \leq \kappa(j \iminus x) &(\text{using (\ref{eq:kappaprop})}).
\end{align*}
Therefore, the maximum of the set $\{ u \in A^\delta \ | \ x \oplus^\pi u \leq x\}$ is $\kappa(j \iminus x)$, which is indeed an element of $M^\infty(A^\delta)$.
\end{proof}

We can now prove:
\begin{prop}\label{prop:kdefinition} \textup{(i)} \,
For any $x \in X$, there exists a largest  ideal $J$ of $A$ such that $I_x \iplus J \subseteq I_x$, namely, the prime  ideal $(F_x \iminus I_x)^c$. \textup{(ii)} \, For any $x \in X$, $(F_x \iminus I_x)^c$ is in fact a prime MV-ideal.
\end{prop}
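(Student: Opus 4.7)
The plan is to deduce this proposition from Lemma~\ref{prop:largestprimecanext} by transporting from $A^\delta$ back to $A$ via the identifications (\ref{eq:filterbijection})--(\ref{eq:idealbijection}) of filters and ideals of $A$ with filter and ideal elements of $A^\delta$. Write $\bar x := \bigvee I_x \in M^\infty(A^\delta)$, so that $j := \bigwedge F_x = \kappa^{-1}(\bar x) \in J^\infty(A^\delta)$. As observed in the remark following (\ref{eq:iminusdef}), under these identifications the concrete operation $\iplus$ on ideals of $A$ agrees with $\oplus^\pi$ restricted to ideal elements, and $\iminus$ between a filter and an ideal of $A$ agrees with the corresponding operation on filter/ideal elements of $A^\delta$. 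In particular, for any ideal $J$ of $A$, the condition $I_x \iplus J \subseteq I_x$ translates to $\bar x \oplus^\pi \bigvee J \leq \bar x$.

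For part (i), I would apply Lemma~\ref{prop:largestprimecanext} to $\bar x \in M^\infty(A^\delta)$: the largest $u \in A^\delta$ with $\bar x \oplus^\pi u \leq \bar x$ is $\kappa(j \iminus \bar x)$, and it lies in $M^\infty(A^\delta)$. Since $M^\infty(A^\delta) \subseteq I(A^\delta)$, this maximum is automatically an ideal element and, via (\ref{eq:idealbijection}), corresponds to a largest ideal $J_0$ of $A$ with $I_x \iplus J_0 \subseteq I_x$; moreover, $J_0$ is prime because $\kappa(j \iminus \bar x) \in M^\infty(A^\delta)$. To identify $J_0$ explicitly, note that $j \iminus \bar x$ is the filter element associated to the filter $F_x \iminus I_x$ of $A$, and by the discussion following (\ref{eq:idealbijection}) the bijection $\kappa$ corresponds to set-theoretic complementation between prime filters and prime ideals. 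Hence $J_0 = (F_x \iminus I_x)^c$, which proves (i).

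For part (ii), it remains to show that the prime ideal $J := (F_x \iminus I_x)^c$ is closed under $\oplus$. By associativity of $\iplus$ (Proposition~\ref{prop:additionproperties}.2) and monotonicity (Proposition~\ref{prop:additionproperties}.3),
\begin{align*}
I_x \iplus (J \iplus J) = (I_x \iplus J) \iplus J \subseteq I_x \iplus J \subseteq I_x.
\end{align*}
Since $J \iplus J$ is an ideal of $A$ (Proposition~\ref{prop:additionproperties}.1) and $J$ is the largest ideal of $A$ satisfying $I_x \iplus J \subseteq I_x$, we conclude $J \iplus J \subseteq J$. Then Proposition~\ref{prop:additionproperties}.5 yields that $J$ is an MV-ideal, completing (ii).

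The only real obstacle is the bookkeeping in the first paragraph: one must verify that the concrete operations $\iplus$ and $\iminus$ on ideals and filters of $A$ transport cleanly to the canonical-extension operations $\oplus^\pi$ and its filter/ideal lift, so that Lemma~\ref{prop:largestprimecanext} applies verbatim to $\bar x = \bigvee I_x$. Once this translation is set up, both parts follow with essentially no further calculation, part (ii) being a short bootstrap from the maximality statement of part (i).
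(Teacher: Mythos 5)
Your proposal is correct and follows essentially the same route as the paper: part (i) is exactly the paper's argument (apply Lemma~\ref{prop:largestprimecanext} to $x$ viewed as an element of $M^\infty(A^\delta)$ and translate back via the filter/ideal identifications), and for part (ii) your associativity-plus-maximality computation showing $J \iplus J \subseteq J$ merely re-derives, at the level of concrete ideals, the closure-under-$\oplus^\pi$ clause of that same lemma, which is what the paper cites directly before invoking Proposition~\ref{prop:additionproperties}.5. No gaps.
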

\begin{proof}
(i) \, Write $k(x) := \kappa(\kappa^{-1}(x) \iminus x)$. By Lemma~\ref{prop:largestprimecanext}, the largest such ideal is $I_{k(x)}$, which is indeed equal to $(F_x \iminus I_x)^c$.

(ii) \, By Proposition~\ref{prop:largestprimecanext}, the set of elements $u$ such that $x \oplus^\pi u \leq x$ is closed under $\oplus^\pi$. In particular, $k(x) + k(x)$ is in this set, so $k(x) + k(x) \leq k(x)$. It now follows that $I_{k(x)}$ is an MV-ideal by Propostion~\ref{prop:plusprops}.
\end{proof}
\begin{dfn}\label{def:k}In light of Proposition \ref{prop:kdefinition}, we define a function $k\colon X \to Y$ by letting $k(x)\in Y$ be such that $I_{k(x)}$ is the largest prime MV-ideal of $A$ satisfying $I_x \iplus J \subseteq I_x$; equivalently, $k(x) := \kappa(\kappa^{-1}(x) \iminus x)$.
\end{dfn}
The fibres of the map $k\colon X \to Y$ decompose $X$ in a  simple manner. 
\begin{prop}[Decomposition by $k$]\label{prop:chainusingk}
For any $y \in Y$, we have $k^{-1}({\uparrow}y) = C_y$.
\end{prop}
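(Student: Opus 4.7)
The plan is essentially to unwind the definitions, since the statement should be a direct consequence of how $k$ was defined and the monotonicity of $\iplus$. Concretely, recall that for $x \in X$ and $y \in Y$, the relation $y \leq k(x)$ is by definition $I_y \subseteq I_{k(x)}$, while $x \in C_y$ means $I_x \iplus I_y \subseteq I_x$. So the claim becomes the set-theoretic equivalence
\[ I_y \subseteq I_{k(x)} \iff I_x \iplus I_y \subseteq I_x, \]
which should be proved for each $x \in X$.

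For the inclusion $C_y \subseteq k^{-1}({\uparrow} y)$, I would suppose $x \in C_y$, so $I_x \iplus I_y \subseteq I_x$. Since $y \in Y$, $I_y$ is a prime MV-ideal, and hence it is a prime MV-ideal $J$ with $I_x \iplus J \subseteq I_x$. By the maximality property in Definition~\ref{def:k} characterizing $k(x)$, we conclude $I_y \subseteq I_{k(x)}$, that is $y \leq k(x)$, so $x \in k^{-1}({\uparrow} y)$.

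For the reverse inclusion $k^{-1}({\uparrow} y) \subseteq C_y$, suppose $y \leq k(x)$, i.e.\ $I_y \subseteq I_{k(x)}$. Proposition~\ref{prop:kdefinition} gives $I_x \iplus I_{k(x)} \subseteq I_x$, and monotonicity of $\iplus$ (Proposition~\ref{prop:additionproperties}.3) yields
\[ I_x \iplus I_y \subseteq I_x \iplus I_{k(x)} \subseteq I_x. \]
In particular $1 \notin I_x \iplus I_y$, so $(x,y) \in \dom(+)$ and $x + y$ is defined; by (\ref{def:+}) we get $x + y \leq x$, i.e.\ $x \in C_y$.

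There is no real obstacle here: the whole argument is a bookkeeping exercise with the order on $X$ and the definition of $k$. The only thing worth noting is the implicit check that $x + y$ is defined in the reverse direction, which comes for free from $I_x \iplus I_y \subseteq I_x$ and the fact that $1 \notin I_x$.
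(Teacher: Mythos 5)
Your argument is correct and is essentially the paper's own proof, which disposes of the statement in one line by noting that $k(x) \geq y$ iff $x + y \leq x$ iff $x \in C_y$, the first equivalence being the defining maximality property of $I_{k(x)}$ among ideals $J$ with $I_x \iplus J \subseteq I_x$ (Proposition~\ref{prop:kdefinition}). Your additional bookkeeping --- the appeal to monotonicity of $\iplus$ and the check that $x+y$ is defined because $1 \notin I_x \iplus I_y \subseteq I_x$ --- is a correct expansion of what the paper leaves implicit.
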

\begin{proof}
Let $y \in Y$. For any $x \in X$, we have $k(x) \geq y$ iff $x + y \leq x$ iff $x \in C_y$, by definition of $C_y$.
\end{proof}
\begin{prop}\label{prop:kcontinuity}
The map $k : X \to Y$ has the following properties.
\begin{enumerate}
\item For $x \in X$, we have
\begin{align*}
I_{k(x)} 
         &= \{a \in A \ | \ \forall c \in A : (c \ominus a \in I_x \rightarrow c \in I_x)\}.
\end{align*}        
\item The set $k^{-1}(\hat{a})$ is open in $(X,\tau^p)$ for any $a \in A$. 
\item The map $k\colon (X,\tau^p) \to (Y,\tau^\downarrow)$ is a continuous function.
\end{enumerate}
\end{prop}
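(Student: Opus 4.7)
The plan is to establish item~1 first, since items~2 and~3 will then follow essentially by inspection.

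For item~1, I would start from the identification $I_{k(x)} = (F_x \iminus I_x)^{c}$ furnished by Proposition~\ref{prop:kdefinition}, and unfold the definition (\ref{eq:iminusdef}) of $\iminus$. By contraposition, $a \notin I_{k(x)}$ means there exist $u \in F_x$ and $v \in I_x$ with $u \ominus v \leq a$, and I would show this is equivalent to the existence of some $c \in F_x$ with $c \ominus a \in I_x$. Both directions use only the adjunction $\ominus \dashv \oplus$ of Lemma~\ref{lem:minusplusadjunction}.1: in the forward direction, set $c := u$ and rewrite $u \ominus v \leq a$ as $u \ominus a \leq v$, so that $c \ominus a \in I_x$ because $I_x$ is a downset; in the backward direction, set $u := c$ and $v := c \ominus a \in I_x$, so that the tautology $c \ominus a \leq v$ rewrites via adjunction as $c \leq a \oplus v$ and then as $u \ominus v \leq a$, witnessing $a \in F_x \iminus I_x$.

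For item~2, combining item~1 with $F_x = I_x^{c}$ gives
\begin{align*}
k^{-1}(\hat{a}) \;=\; \{x \in X \mid a \notin I_{k(x)}\} \;=\; \bigcup_{c \in A}\bigl(\hat{c} \,\cap\, (\widehat{c \ominus a})^{c}\bigr).
\end{align*}
Each member of the union is the intersection of a clopen downset with a clopen upset of $(X,\tau^{p},\leq)$, hence clopen, so $k^{-1}(\hat{a})$ is a union of clopen sets, and thus $\tau^{p}$-open. For item~3, the spectral topology on $Y$ has as basic opens the restrictions $\hat{a} \cap Y$, $a \in A$, by Remark~\ref{r:mv-spectra}; since $k$ takes values in $Y$ we have $k^{-1}(\hat{a} \cap Y) = k^{-1}(\hat{a})$, which is $\tau^{p}$-open by item~2.

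The only step with genuine content is item~1, where the delicate point is the double invocation of the $\ominus \dashv \oplus$ adjunction needed to compress the existential witnesses $(u,v)$ in the definition of $F_x \iminus I_x$ into a single witness $c$; once that characterization is secured, the topological conclusions in items~2 and~3 are purely formal.
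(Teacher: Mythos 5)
Your proof is correct and takes essentially the same route as the paper's: both reduce item 1 to the identity $I_{k(x)} = (F_x \iminus I_x)^c$ together with a rotation of the adjunction between $\ominus$ and $\oplus$, and then obtain items 2 and 3 by rewriting the resulting characterization as a union of clopen sets. The only difference is presentational: you unfold the concrete definition of $\iminus$ on lattice filters and ideals and invoke the adjunction in $A$ itself (Lemma~\ref{lem:minusplusadjunction}.1), whereas the paper runs the same chain of equivalences inside the canonical extension using $\kappa$ and the lifted adjunction; the paper's own remark identifying the two formulations makes these interchangeable.
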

\begin{proof}
For item 1,  recall that $k(x) = \kappa(\kappa^{-1}(x) \iminus x)$, when $x$ is regarded as an element of $M^\infty(A^\delta)$. Therefore, 
\begin{align*}
a \not\in I_{k(x)} &\iff a \nleq \kappa(\kappa^{-1}(x) \iminus x) &\text{(using (\ref{eq:idealbijection}))}\\
&\iff \kappa^{-1}(x) \iminus x \leq a &\text{(using (\ref{eq:kappaprop}))}\\
&\iff \kappa^{-1}(x) \iminus a \leq x &\text{(*)}\\
&\iff \bigwedge_{\kappa^{-1}(x) \leq c} \left(c \ominus a\right) \leq x &\text{(def. of $\iminus$ on $F(A^\delta)$)}\\
&\iff \exists c \in A \ : \kappa^{-1}(x) \leq c \text{ and } c \ominus a \leq x &(x \in M^\infty(A^\delta))\\
&\iff \exists c \in A \ : \  c \not\in I_x \text{ and } c \ominus a \in I_x &\text{(using (\ref{eq:idealbijection}))} ,
\end{align*}
where the equivalence marked (*) follows easily from the adjunction  between $\iminus$ and $\iplus$ (Lemma \ref{lem:minusplusadjunction}.1 and Proposition~\ref{prop:adjunctionlifts}), and the fact that $\iplus$ is commutative. From the above chain of equivalences, item 1 is clear. Item 2 follows from item 1 upon noting that $a \in I_{k(x)}$ if, and only if $x \in \bigcap_{c \in A} (\hat{c \ominus a} \cup \hat{c}^c)$, so that
\[ k^{-1}(\hat{a}^c) = \bigcap_{c \in A} (\hat{c \ominus a} \cup \hat{c}^c),\]
which is closed in $(X,\tau^p)$. Item 3 is now immediate from the definition of the topology $\tau^\downarrow$.
\end{proof}
\begin{rem}\label{r:martinez} Proposition~\ref{prop:kcontinuity}.1 shows that our function $k$ is the same as the function $K$ defined in \cite[Thm. 6.1.3]{cdm}, where it is  attributed to the PhD thesis of N. G. Mart\'inez, also see \cite{Ma1996}. \qed
\end{rem}
The fixed points of the map $k$ are exactly the points of $X$ corresponding to prime MV-ideals. That is, $k$ is a retraction of $X$ onto $Y$.
\begin{prop}\label{prop:fixpointsk}
Let $x \in X$. Then $x \in Y$ if, and only if, $k(x) = x$.
\end{prop}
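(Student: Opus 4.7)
The plan is to prove the two implications separately, using only the defining property of $k$ from Definition~\ref{def:k} together with one simple observation about $\iplus$.

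For the ``if'' direction, suppose $k(x) = x$. By construction, $k$ takes values in $Y$, so $x = k(x) \in Y$.

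For the ``only if'' direction, suppose $x \in Y$, i.e.\ $I_x$ is a prime MV-ideal. By Proposition~\ref{prop:additionproperties}.5, $I_x \iplus I_x \subseteq I_x$, so $I_x$ itself is a prime MV-ideal $J$ satisfying the condition $I_x \iplus J \subseteq I_x$. Since $I_{k(x)}$ is the \emph{largest} such ideal, we obtain $I_x \subseteq I_{k(x)}$, i.e.\ $x \leq k(x)$. Conversely, note that for any ideal $J$ we have $J \subseteq I_x \iplus J$ (because $0 \in I_x$ and hence every $b \in J$ satisfies $b \leq 0 \oplus b$, placing $b$ in $I_x \iplus J$ by (\ref{eq:iplusdef})); applying this to $J := I_{k(x)}$ together with the defining inclusion $I_x \iplus I_{k(x)} \subseteq I_x$ yields $I_{k(x)} \subseteq I_x$, i.e.\ $k(x) \leq x$. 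Combining the two inequalities gives $k(x) = x$.

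I do not foresee any real obstacle: the argument is a one-line reading of Definition~\ref{def:k}, with the only minor point being the observation $J \subseteq I_x \iplus J$ needed to push the inclusion $I_{k(x)} \subseteq I_x$ in the nontrivial direction.
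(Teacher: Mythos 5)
Your proof is correct and follows essentially the same route as the paper: the paper deduces $x \leq k(x)$ from idempotence of $x$ (Proposition~\ref{prop:plusprops}.4, which rests on the same fact $I_x \iplus I_x \subseteq I_x$ you cite) and $k(x) \leq x$ from $k(x) = 0 + k(x) \leq x + k(x) \leq x$, which is precisely your observation $I_{k(x)} \subseteq I_x \iplus I_{k(x)} \subseteq I_x$ unpacked at the level of ideals. No gaps.
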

\begin{proof}
For the non-trivial implication, recall from Proposition~\ref{prop:plusprops}.4 that if $x \in Y$, then $x + x$ is defined and $x + x = x$. In particular, by definition of $k(x)$, we have $x \leq k(x)$. On the other hand, using translation-invariance of $+$ (Proposition ~\ref{prop:plusprops}.3), we get $k(x) = 0 + k(x) \leq x + k(x) \leq x$. We conclude that $x = k(x)$.
\end{proof}
Finally, we show how the map $k$ relates to the order of the space $X$. This will be of crucial importance in our applications.
\begin{lem}[Interpolation Lemma]\label{lem:interpolation}
Let $x, x' \in X$ be such that $x \leq x'$. There exists $x'' \in X$ such that $x \leq x'' \leq x'$, $k(x'') \geq k(x)$ and $k(x'') \geq k(x')$.
\end{lem}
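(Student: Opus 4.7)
The plan is to construct $x''$ explicitly as the point whose prime ideal is $I_x \iplus I_{k(x')}$, i.e., intuitively set $x'' := x + k(x')$. The motivation is that we want to ``push $x$ up'' by adding the MV-ideal $I_{k(x')}$ without crossing above $x'$, so that the sum absorbs $I_{k(x')}$ (giving $k(x'') \geq k(x')$) while the left summand $I_x$ still absorbs $I_{k(x)}$ (giving $k(x'') \geq k(x)$).

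First I need to check that $I_x \iplus I_{k(x')}$ is in fact a (proper) prime ideal, so that it corresponds to a genuine point of $X$. From $k(x') \in Y$ and the definition of $k$ (Definition~\ref{def:k}) we have $I_{k(x')} \subseteq I_{x'}$, while $x \leq x'$ gives $I_x \subseteq I_{x'}$. Using monotonicity of $\iplus$ (Proposition~\ref{prop:additionproperties}.3) together with $x' \in C_{k(x')}$, one obtains
\[
 I_x \iplus I_{k(x')} \subseteq I_{x'} \iplus I_{k(x')} \subseteq I_{x'},
\]
so in particular $1 \notin I_x \iplus I_{k(x')}$. By Corollary~\ref{cor:primeplusideal} (or directly Proposition~\ref{prop:primeplusidealcanext}) this means $I_x \iplus I_{k(x')}$ is a prime ideal, and we let $x'' \in X$ be the corresponding point. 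The displayed chain also shows $x'' \leq x'$, while $x \leq x''$ follows from $0 \in I_{k(x')}$.

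It remains to verify $k(x'') \geq k(x)$ and $k(x'') \geq k(x')$. By Proposition~\ref{prop:chainusingk}, these amount to $x'' \in C_{k(x)}$ and $x'' \in C_{k(x')}$, i.e., to the two inclusions $I_{x''} \iplus I_{k(x)} \subseteq I_{x''}$ and $I_{x''} \iplus I_{k(x')} \subseteq I_{x''}$. Both follow by routine manipulation using commutativity and associativity of $\iplus$ (Proposition~\ref{prop:additionproperties}.2). For the first, rewrite
\[
 (I_x \iplus I_{k(x')}) \iplus I_{k(x)} = (I_x \iplus I_{k(x)}) \iplus I_{k(x')} \subseteq I_x \iplus I_{k(x')},
\]
using that $I_x \iplus I_{k(x)} \subseteq I_x$ by definition of $k(x)$ and monotonicity of $\iplus$. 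For the second, rewrite
\[
 (I_x \iplus I_{k(x')}) \iplus I_{k(x')} = I_x \iplus (I_{k(x')} \iplus I_{k(x')}) \subseteq I_x \iplus I_{k(x')},
\]
using that $I_{k(x')}$ is an MV-ideal (Proposition~\ref{prop:additionproperties}.5) so that $I_{k(x')} \iplus I_{k(x')} \subseteq I_{k(x')}$.

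The one nontrivial step is guessing the right candidate $x''$; once this is identified, the verification is an entirely mechanical use of the algebraic properties of $\iplus$ and the defining property of $k$. The asymmetry of the construction (using $k(x')$ rather than $k(x)$) is essential: if we naively set $x + k(x)$ we would just recover $x$, whereas adding $I_{k(x')}$ actually moves $x$ upward into the common chain $C_{k(x)} \cap C_{k(x')}$.
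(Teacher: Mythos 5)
Your proposal is correct and follows essentially the same route as the paper: the paper also sets $x'' := x + k(x')$ and verifies $k(x'') \geq k(x)$ and $k(x'') \geq k(x')$ by exactly the commutativity/associativity computations you give (the paper checks well-definedness via $k(x') \leq i(x') \leq i(x)$ rather than your ideal-containment chain, but this is only a cosmetic difference). No gaps.
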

\begin{proof}We use the properties of $+$ established in Proposition~\ref{prop:plusprops}.
Let $x'' := x + k(x')$. Note that $x''$ is well-defined, because $k(x') \leq i(x') \leq i(x)$.
Clearly, $x \leq x''$, by monotonicity of $+$. Also, $x'' = x + k(x') \leq x' + k(x') \leq x'$.
To show that $k(x) \leq k(x'')$, it suffices to show that $x'' + k(x) = x''$, by definition of $k(x'')$. We calculate:
\begin{align*} 
x'' + k(x) &= x + k(x') + k(x) \\
&= x + k(x) + k(x') \\
&= x + k(x') = x''.
\end{align*}
Similarly, to show that $k(x') \leq k(x'')$, we prove that $x'' + k(x') = x''$:
\begin{align*}
x'' + k(x') &= x + k(x') + k(x') \\
&= x + k(x') = x''.\qedhere
\end{align*}
\end{proof}
\section{Kaplansky's Theorem for MV-algebras: the map $m$}\label{sec:kaplansky}
In \cite{kaplansky}, Kaplansky  proved that for any compact Hausdorff space $S$  the (unbounded) distributive lattice of all real-valued continuous functions $\C{(S)}$ uniquely determines $S$ to within homeomorphism. He also remarked on how to obtain the analogous result for the bounded lattice of continuous functions with co-domain the unit interval $[0,1]\subseteq \R$. Kaplansky's result should be compared with the standard Stone-Gelfand-Kolmogorov theorem that the unital commutative ring structure of $\C{(S)}$ determines $S$; see e.g.\ \cite[Thm. 4.9]{GillJer1976}. The set $\C{(S,[0,1])}$ of all $[0,1]$-valued continuous functions on $S$ is naturally an MV-algebra, with operations defined pointwise from the standard MV-algebra $[0,1]$. As an application of   Theorem~\ref{thm:kaplansky},  we prove (Corollary \ref{cor:kaplanskyoriginal}): The underlying lattice  of any separating MV-subalgebra  of $\C{(S,[0,1])}$ uniquely determines the homeomorphism type of $S$.   

By Corollary \ref{cor:rootsystem}, there is a uniquely determined  function
\begin{align*}
m\colon Y \to Z
\end{align*}
that sends each prime MV-ideal to the unique maximal MV-ideal that contains it. This map is continuous with respect to the spectral topology. 
To see this, recall that a distributive lattice $D$ is \emph{normal} \cite[p.\ 67]{Johnstone1982}  if, whenever $d_1, d_2 \in D$ satisfy $d_1\vee d_2 = 1$ (the maximum of $D$), there are $c_1, c_2 \in D$ with $c_1 \wedge c_2 = 0$ (the minimum of $D$) such that $c_1 \vee d_2 = 1$ and $c_2 \vee d_1 =1$. The lattice of open sets of a topological space is normal in this sense precisely when the space is normal in the usual topological sense, whence the terminology. In the Stone dual space $(X(D),\tau^{\downarrow})$ of any distributive lattice $D$, the subspace of maximal points, $\max(X(D))$, forms a $T_{1}$ subspace, because the specialization order on that space is trivial. For normal lattices, however, more can be said.
\begin{lem}[{\cite[Chapt. II, Lem. 3.6]{Johnstone1982}}]\label{lem:johnstone0}Let $D$ be a distributive lattice. If $D$ is normal, then the maximal spectrum $(\max(X(D)),\tau^\downarrow)$ of $D$ is Hausdorff.\end{lem}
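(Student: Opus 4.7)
The strategy is to separate two distinct maximal points of $(X(D),\tau^{\downarrow})$ by disjoint basic open sets of the form $\hat{c}$, using normality to manufacture a pair of lattice elements whose meet is $0$ while each of them lies outside the corresponding ideal.

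First I would fix distinct $z_{1}, z_{2} \in \max(X(D))$. Since both $I_{z_{1}}, I_{z_{2}}$ are maximal lattice ideals and they are distinct, neither contains the other, so I can pick some $a \in I_{z_{1}} \setminus I_{z_{2}}$. The maximality of $I_{z_{2}}$ then forces the ideal generated by $I_{z_{2}} \cup \{a\}$ to be all of $D$; unpacking, there exists $b \in I_{z_{2}}$ with $a \vee b = 1$.

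Now I would invoke normality: applied to the pair $(a, b)$ with $a \vee b = 1$, it yields $c_{1}, c_{2} \in D$ satisfying
\begin{align*}
c_{1} \wedge c_{2} = 0, \qquad c_{1} \vee b = 1, \qquad c_{2} \vee a = 1.
\end{align*}
From $c_{1} \vee b = 1$ and $b \in I_{z_{2}}$ one gets $c_{1} \notin I_{z_{2}}$, lest $1 \in I_{z_{2}}$; hence $z_{2} \in \hat{c}_{1}$. Symmetrically, $c_{2} \vee a = 1$ with $a \in I_{z_{1}}$ forces $z_{1} \in \hat{c}_{2}$. Finally, $\hat{c}_{1} \cap \hat{c}_{2} = \widehat{c_{1} \wedge c_{2}} = \hat{0} = \emptyset$, so $\hat{c}_{1}$ and $\hat{c}_{2}$ are disjoint $\tau^{\downarrow}$-open neighbourhoods of $z_{2}$ and $z_{1}$ respectively; restricting to $\max(X(D))$ furnishes the required Hausdorff separation.

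The only real subtlety is step two (producing $b$ with $a \vee b = 1$), which requires that maximality of the lattice ideal $I_{z_{2}}$ be used in the strong form that any strictly larger ideal is $D$; everything else is a straightforward application of normality together with the basic identities $\widehat{a \wedge b} = \hat{a} \cap \hat{b}$ and $\hat{0} = \emptyset$ of the Stone map.
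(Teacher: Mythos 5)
Your proof is correct; the paper does not prove this lemma itself but cites Johnstone, and your argument is precisely the standard one from that source: apply normality to a join decomposition $a \vee b = 1$ with $a \in I_{z_1}$, $b \in I_{z_2}$ to get $c_1 \wedge c_2 = 0$ with $c_1 \vee b = 1$ and $c_2 \vee a = 1$, yielding disjoint basic opens $\hat{c}_1 \ni z_2$ and $\hat{c}_2 \ni z_1$. The one point worth making explicit is that a point of $\max(X(D))$ --- a prime ideal maximal among prime ideals, i.e.\ a closed point of $(X(D),\tau^\downarrow)$ --- really is a maximal \emph{proper} lattice ideal (every proper ideal extends to a maximal proper one by Zorn, and maximal proper ideals of a distributive lattice are prime), which is exactly the ``strong form'' of maximality you flag as needed to produce $b$ with $a \vee b = 1$.
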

\noindent While the converse of Lemma \ref{lem:johnstone0} fails (see \cite[Chapt. II, Example 3.7]{Johnstone1982}), we have:
\begin{lem}[{\cite{Simmons1980}; \cite[Chapt. II, Prop. 3.7]{Johnstone1982}}]\label{lem:johnstone} Let $D$ be a distributive lattice. The following are equivalent.
\begin{enumerate}
\item $D$ is normal.
\item Each prime ideal of $D$ is contained in a unique maximal ideal of $D$.
\item The inclusion map $\max(X(D)) \subseteq X(D)$ admits a continuous retraction $(X(D),\tau^\downarrow)\onto (\max(X(D)),\tau^\downarrow)$. 
\end{enumerate}
\end{lem}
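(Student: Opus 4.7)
The plan is to prove all three conditions equivalent by establishing the implications $(1) \Rightarrow (2)$, $(2) \Rightarrow (3)$, $(3) \Rightarrow (2)$, and $(2) \Rightarrow (1)$, with the continuity argument for $(2) \Rightarrow (3)$ being the main technical step. For $(1) \Rightarrow (2)$, suppose $D$ is normal and some prime ideal $P$ were contained in two distinct maximal ideals $M_1 \neq M_2$. Since $M_1 \vee M_2 = D$ by maximality of $M_1$, one finds $d_i \in M_i$ with $d_1 \vee d_2 = 1$; applying normality gives $c_1, c_2$ with $c_1 \wedge c_2 = 0$, $c_1 \vee d_2 = 1$, and $c_2 \vee d_1 = 1$. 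Primeness of $P$ forces some $c_i$ into $P \subseteq M_{3-i}$, whence $1 = c_i \vee d_{3-i} \in M_{3-i}$, a contradiction.

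For $(2) \Rightarrow (3)$, define $m \colon X(D) \to \max(X(D))$ by sending $x$ to the unique maximal ideal $m(x) \supseteq I_x$; this is a retraction by construction. The key step toward continuity is the equivalence
\[ a \notin m(x) \iff \exists\, b \in I_x \text{ with } a \vee b = 1.\]
Here $(\Leftarrow)$ is immediate from $I_x \subseteq m(x)$ together with properness of $m(x)$; for $(\Rightarrow)$, the ideal generated by $I_x \cup \{a\}$ must contain $1$, since otherwise it would extend, by Zorn, to a maximal ideal which by (2) equals $m(x)$, contradicting $a \notin m(x)$. This yields
\[ m^{-1}\bigl(\hat{a} \cap \max(X(D))\bigr) = \bigcup \{\hat{b}^{\,c} \mid b \in D,\, a \vee b = 1\},\]
a union of $\tau^{p}$-clopen sets, hence $\tau^{p}$-open. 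Moreover the preimage is a $\leq$-downset: if $x' \leq x$, then $I_{x'} \subseteq I_x \subseteq m(x)$, so $m(x)$ is a maximal ideal above $I_{x'}$, forcing $m(x') = m(x)$ by uniqueness. Being simultaneously $\tau^{p}$-open and a downset, the preimage is $\tau^{\downarrow}$-open by the characterization of $\tau^{\downarrow}$ recalled in Section~\ref{s:stone-priestley}.

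For $(3) \Rightarrow (2)$, let $r$ be a continuous retraction. Since the specialization order of $\tau^{\downarrow}$ is the reverse of $\leq$, the $\tau^{\downarrow}$-closure of $\{x\}$ is $\{y \mid I_x \subseteq I_y\}$ and thus contains every maximal ideal above $I_x$. Continuity forces $r\bigl(\overline{\{x\}}\bigr) \subseteq \{r(x)\}$ because $\max(X(D))$ is $T_1$, and since $r$ fixes each maximal ideal, every maximal ideal above $I_x$ equals $r(x)$. Finally, for $(2) \Rightarrow (1)$, assume $d_1 \vee d_2 = 1$ and set $F_i := \{a \mid a \vee d_{3-i} = 1\}$, each a filter of $D$. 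If no pair $c_1 \in F_1$, $c_2 \in F_2$ satisfies $c_1 \wedge c_2 = 0$, then the filter generated by $F_1 \cup F_2$ is proper, and by the prime ideal theorem admits a prime ideal $P$ disjoint from $F_1 \cup F_2$. The ideals generated by $P \cup \{d_i\}$ are then proper (an improperness witness would produce an element of $P \cap F_{3-i}$) and extend via Zorn to distinct maximal ideals $M_i \supseteq P$ with $d_i \in M_i$, contradicting (2).

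The main obstacle is the continuity argument in $(2) \Rightarrow (3)$: the formula displays $m^{-1}(\hat{a} \cap \max(X(D)))$ as a union of $\tau^{p}$-clopen upsets, so only its $\tau^{p}$-openness is evident, whereas the fact that $m$ collapses each $\leq$-chain forces the preimage to be a downset as well, and it is this reconciliation of upset and downset structure through the Priestley characterization of $\tau^{\downarrow}$ that yields continuity.
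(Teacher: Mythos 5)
Your proof is correct. Note that the paper itself offers no proof of this lemma --- it is imported verbatim from Simmons and from Johnstone's \emph{Stone Spaces} (Chapt.~II, Prop.~3.7) --- so there is no internal argument to compare against; what you have written is essentially the standard proof of that cited result, usefully transposed into this paper's Priestley-theoretic conventions. In particular, your continuity argument for $(2)\Rightarrow(3)$ --- exhibiting $m^{-1}(\hat{a}\cap\max(X(D)))$ as a union of clopen upsets $\hat{b}^{\,c}$ and then observing that preimages under $m$ are automatically downsets because $m$ is constant on order-components, so that $\tau^p$-openness upgrades to $\tau^\downarrow$-openness --- is exactly the device the paper itself uses later in the proof of Lemma~\ref{lem:kaplansky2}, so your proof sits naturally alongside the surrounding material. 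One micro-gap: in $(2)\Rightarrow(1)$ you assert without comment that the maximal ideals $M_1, M_2$ obtained from Zorn are distinct; this needs the one-line observation that a single proper ideal cannot contain both $d_1$ and $d_2$, since it would then contain $d_1\vee d_2=1$. Everything else (the characterization $a\notin m(x)\iff\exists b\in I_x$ with $a\vee b=1$, the $T_1$ argument in $(3)\Rightarrow(2)$, and the filter argument in $(2)\Rightarrow(1)$) checks out.
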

Direct inspection shows that the underlying distributive lattice of an MV-algebra $A$ is in general not normal. 
\begin{exa}\label{ex:notnorm} In the MV-algebra $A:=\C{([0,1],[0,1])}$ of continuous functions from $[0,1]$ to itself, consider the function $d_{1}$ that is constantly equal to $1$ over $[0,\frac{1}{2}]$, takes value $0$ at $1$, and is affine linear over  $[\frac{1}{2},1]$. Further consider the function $d_{2}$ that is 
 constantly equal to $1$ over $[\frac{1}{2},1]$, takes value $0$ at $0$, and is affine linear over  $[0,\frac{1}{2}]$. Then $d_{1}\vee d_{2}=1_{[0,1]}$, where $1_{[0,1]}$ denotes  the function constantly equal to $1$ over $[0,1]$, that is, the top element of the MV-algebra $A$. Suppose now that $c_{1}$ is an element satisfying $c_{1}\vee d_{2}=1_{[0,1]}$. Then $c_{1}$ takes value $1$ at least over the open set $[0,\frac{1}{2})$, because $d_{2}<1$ there. By continuity, $c_{1}$ is $1$ at least over the closure $[0,\frac{1}{2}]$ of $[0,\frac{1}{2})$. Similarly, if $c_{2}\vee d_{1}=1_{[0,1]}$ then $c_{2}$ is $1$ at least over the closed set $[\frac{1}{2},1]$. But then $c_{1}\wedge c_{2}$ is  $1$ at $\frac{1}{2}$, and the underlying lattice of $A$ fails to be normal.\qed
 \end{exa}
However,  the lattice of principal MV-congruences $\mathrm{KCon}\, A$ of any MV-algebra $A$ is normal. This allows us to prove:
\begin{prop}\label{prop:mretract}
The map $m \colon  (Y,\tau^\downarrow) \to (Z,\tau^\downarrow)$  is a retraction of the inclusion map $Z \subseteq Y$, and $(Z,\tau^\downarrow)$ is a compact Hausdorff space.
\end{prop}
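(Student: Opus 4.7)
The plan is to apply Lemmas~\ref{lem:johnstone0} and~\ref{lem:johnstone} to a distributive lattice whose spectral dual is $(Y,\tau^\downarrow)$. By Proposition~\ref{prop:YasKPrin}, such a lattice can be taken to be $A/\sigma$, equivalently the lattice $\mathrm{KCon}\,A$ of principal MV-congruences of $A$. The crux of the argument is therefore to verify that $\mathrm{KCon}\,A$ is normal as a distributive lattice. Note that Example~\ref{ex:notnorm} shows that the underlying lattice of $A$ itself is in general not normal, so passage to $\mathrm{KCon}\,A$ is essential: the normality we need must genuinely use MV-algebraic structure.

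For the normality step, I would argue as follows. By Proposition~\ref{prop:idealgenerated}, joins and meets in $\mathrm{KCon}\,A$ are given by $\langle a\rangle \vee \langle b\rangle = \langle a \vee b\rangle = \langle a \oplus b\rangle$ and $\langle a\rangle \wedge \langle b\rangle = \langle a \wedge b\rangle$, with top element $\langle 1\rangle = A$. Given $\langle a\rangle, \langle b\rangle \in \mathrm{KCon}\,A$ with $\langle a\rangle \vee \langle b\rangle = A$, I would take as witnesses $c := a \ominus b$ and $d := b \ominus a$. The identity $(a \ominus b) \wedge (b \ominus a) = 0$ from Lemma~\ref{lem:minusplusadjunction}.6 immediately gives $\langle c\rangle \wedge \langle d\rangle = \langle 0\rangle$. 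Combined with the identity $(a \ominus b) \oplus b = a \vee b$ from \cite[Prop. 1.1.5]{cdm} (already invoked in the proof of Proposition~\ref{prop:separation}), one obtains $\langle c\rangle \vee \langle b\rangle = \langle a \vee b\rangle = \langle a\rangle \vee \langle b\rangle = A$, and symmetrically $\langle d\rangle \vee \langle a\rangle = A$. Hence $\mathrm{KCon}\,A$ is normal.

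With normality established, Lemma~\ref{lem:johnstone} supplies a continuous retraction from $(Y,\tau^\downarrow)$ onto its subspace corresponding to maximal ideals of $A/\sigma$. Since maximal ideals of a distributive lattice are always prime, these pull back under the quotient $A \onto A/\sigma$ to the $\subseteq$-maximal prime MV-ideals of $A$; by Corollary~\ref{cor:rootsystem}, these coincide with the maximal MV-ideals, i.e., with $Z$. The abstract retraction sends each prime MV-ideal to \emph{some} maximal MV-ideal containing it, but by the uniqueness part of Corollary~\ref{cor:rootsystem} it is forced to equal $m$. Hausdorffness of $(Z,\tau^\downarrow)$ then follows from Lemma~\ref{lem:johnstone0}, and compactness is immediate from $Z = m(Y)$ being a continuous image of the compact spectral space $(Y,\tau^\downarrow)$. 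The main obstacle is the normality verification: it fails for the underlying lattice of $A$, and so depends essentially on the MV-algebraic identities governing $\ominus$ and $\oplus$ rather than on any purely lattice-theoretic fact; once it is available, the remaining bookkeeping is straightforward.
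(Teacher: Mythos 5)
Your proposal is correct and follows the same overall architecture as the paper's proof: both reduce the statement to Lemmas~\ref{lem:johnstone0} and~\ref{lem:johnstone} applied to the lattice $\mathrm{KCon}\,A \isom A/\sigma$, whose spectral dual is $(Y,\tau^\downarrow)$ by Proposition~\ref{prop:YasKPrin}, and both get compactness of $Z$ as a continuous image of the compact space $(Y,\tau^\downarrow)$ and Hausdorffness from Lemma~\ref{lem:johnstone0}. The one place where you diverge is the normality of $\mathrm{KCon}\,A$: the paper obtains it indirectly, from the root-system property of Corollary~\ref{cor:rootsystem} via the implication $2\Rightarrow 1$ of Lemma~\ref{lem:johnstone}, whereas you verify the normality axiom directly with the explicit witnesses $\langle a\ominus b\rangle$ and $\langle b\ominus a\rangle$, using $(a\ominus b)\wedge(b\ominus a)=0$ and $(a\ominus b)\oplus b=a\vee b$ together with Proposition~\ref{prop:idealgenerated}. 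This is a perfectly sound computation (and arguably more self-contained, since it exhibits the separating elements rather than invoking the equivalence of Lemma~\ref{lem:johnstone} twice); note, though, that it rests on exactly the same MV-identities that drive Proposition~\ref{prop:separation} and hence Corollary~\ref{cor:rootsystem}, so nothing is really saved. One small step you gloss over: to see that the abstract retraction furnished by Lemma~\ref{lem:johnstone}.3 coincides with $m$, you should observe that continuity with respect to $\tau^\downarrow$ forces the retraction to preserve the specialization order, hence to send each $y\in Y$ to a maximal point above it, at which point the uniqueness in Corollary~\ref{cor:rootsystem} pins it down as $m$; the paper is equally terse on this point, so this is not a gap specific to your argument.
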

\begin{proof}In Lemma \ref{lem:johnstone}, take $D$ to be the lattice $\mathrm{KCon}\, A$ of principal MV-ideals of $A$. Then, by  Proposition \ref{prop:YasKPrin} and Corollary \ref{cor:rootsystem}, $\mathrm{KCon}\, A$  is normal. A further application of Lemma \ref{lem:johnstone} yields the continuity of the retraction $m \colon  (Y,\tau^\downarrow) \to (Z,\tau^\downarrow)$ of $Y$ onto $Z$. Then $(Z,\tau^{\downarrow})$ is compact by  
\cite[Thm. 17.7]{Willard}, because it is the continuous image of the spectral, hence compact, space $(Y,\tau^{\downarrow})$; and it is Hausdorff by Lemma \ref{lem:johnstone0}.
\end{proof}
\begin{rem}\label{rem:completenormality}We observe in passing that the root system property of $(Y, \leq)$ (Corollary~\ref{cor:rootsystem}) corresponds to the fact that $\mathrm{KCon}\,A$ is even \emph{completely normal}; see  \cite[and references therein]{Cignolietal99}.\qed
\end{rem}
The kernel of the function $m \circ k\colon X\to Z$ is easily described from the order of $X$. For this, define a binary relation  $W$ on $X$ by setting
\begin{align}\label{eq:sim}
x_1 W x_2 \text{ iff  there are } x_{1}',x_{2}', x_{0} \in X \text{ such that } x_{1}'\leq x_1,\, x_{2}'\leq x_2,\, x_0\geq x_{1}',x_{2}'.
\end{align}
The  picture below depicts the typical order configuration for which $x_1 W x_2$.

\smallskip
\begin{center}
\begin{tikzpicture}
\po{0,2}
\node[left] at (0,2) {$x_1$};
\po{0,0}
\node[left] at (0,0) {$x_1'$};
\po{1,1}
\node[left] at (1,1) {$x_0$};
\po{2,0}
\node[right] at (2,0) {$x_2'$};
\po{2,2}
\node[right] at (2,2) {$x_2$};
\li{(0,2)--(0,0)--(1,1)--(2,0)--(2,2)}
\end{tikzpicture}
\end{center}
\smallskip

\begin{lem}\label{lem:maximals}
Let $x, x' \in X$ be such that $x \leq x'$. Then $m(k(x)) = m(k(x'))$.
\end{lem}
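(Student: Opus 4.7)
The plan is to combine the Interpolation Lemma~\ref{lem:interpolation} with the root-system property of $(Y,\leq)$ from Corollary~\ref{cor:rootsystem}. The first step will be to observe a general monotonicity-like property of $m$: if $y_1,y_2\in Y$ satisfy $y_1\leq y_2$, then $m(y_1)=m(y_2)$. Indeed, from $I_{y_1}\subseteq I_{y_2}\subseteq I_{m(y_2)}$, the maximal MV-ideal $I_{m(y_2)}$ contains the prime MV-ideal $I_{y_1}$; by the uniqueness statement of Corollary~\ref{cor:rootsystem}, it must equal $I_{m(y_1)}$, so $m(y_1)=m(y_2)$.

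The second step is the main content: given $x\leq x'$ in $X$, apply Lemma~\ref{lem:interpolation} to obtain $x''\in X$ with $x\leq x''\leq x'$ and with $k(x)\leq k(x'')$ and $k(x')\leq k(x'')$ in $Y$. By the first step applied to the comparable pairs $k(x)\leq k(x'')$ and $k(x')\leq k(x'')$ in $Y$, we get
\[ m(k(x)) \;=\; m(k(x'')) \;=\; m(k(x')), \]
which is exactly the claim.

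No obstacle is anticipated beyond invoking the two ingredients correctly; the only subtlety is the observation that $m$ collapses comparable pairs in $Y$, which is an immediate consequence of the uniqueness of the maximal MV-ideal above a given prime MV-ideal (Corollary~\ref{cor:rootsystem}). The Interpolation Lemma does the rest of the work by reducing the general case $x\leq x'$ to two comparable pairs in $Y$ through a common upper bound $k(x'')$.
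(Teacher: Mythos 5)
Your proof is correct and follows essentially the same route as the paper: both invoke the Interpolation Lemma to produce $x''$ with $k(x'')$ above both $k(x)$ and $k(x')$, then conclude via the uniqueness of the maximal MV-ideal above a prime MV-ideal (Corollary~\ref{cor:rootsystem}). Your explicit isolation of the observation that $m$ collapses comparable pairs in $Y$ is just a slightly more detailed phrasing of the same argument.
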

\begin{proof}
By the Interpolation Lemma~\ref{lem:interpolation}, there is $y := k(x'') \in Y$ such that $y \geq k(x)$ and $y \geq k(x')$. The maximal MV-ideal above $y$ is above both $k(x)$ and $k(x')$. Since, by Corollary \ref{cor:rootsystem},  there is a unique maximal ideal above $k(x)$ and $k(x')$, we get $m(k(x)) = m(k(x'))$.
\end{proof}
\begin{lem}\label{lem:kaplansky1} For any $x_1,x_2\in X$, we have $x_1 W x_2$ if, and only if, $m(k(x_1))=m(k(x_2))$. In other words, $W = \ker{(m\circ k)}$.
\end{lem}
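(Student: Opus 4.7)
The plan is to handle the two implications separately, each by short direct arguments using tools already established.

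\textbf{Forward direction.} Suppose $x_1 W x_2$, witnessed by $x_1', x_2', x_0 \in X$ with $x_1' \leq x_1$, $x_1' \leq x_0$, $x_2' \leq x_0$, and $x_2' \leq x_2$. I would apply Lemma~\ref{lem:maximals} four times along this zigzag to obtain
\[
m(k(x_1)) = m(k(x_1')) = m(k(x_0)) = m(k(x_2')) = m(k(x_2)).
\]
This direction requires nothing beyond Lemma~\ref{lem:maximals}.

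\textbf{Backward direction.} Suppose $m(k(x_1)) = m(k(x_2)) =: z$. I would produce the witnesses by setting
\[
x_1' := k(x_1), \qquad x_2' := k(x_2), \qquad x_0 := z.
\]
The four required inequalities are then straightforward:
\begin{itemize}
\item $x_i' = k(x_i) \leq x_i$ for $i=1,2$. The point is that by Definition~\ref{def:k}, $k(x_i)$ is an element of $Y$ with $I_{x_i} \iplus I_{k(x_i)} \subseteq I_{x_i}$, and since $0 \in I_{x_i}$ and $b \leq 0 \oplus b$ for any $b \in I_{k(x_i)}$, the definition of $\iplus$ in equation~(\ref{eq:iplusdef}) forces $I_{k(x_i)} \subseteq I_{x_i}$, i.e.\ $k(x_i) \leq x_i$.
\item $x_0 = z \geq k(x_i) = x_i'$ for $i=1,2$, directly from the definition of $m$: since $z = m(k(x_i))$ is the maximal MV-ideal containing $k(x_i)$, we have $I_{k(x_i)} \subseteq I_z$, i.e.\ $k(x_i) \leq z$.
\end{itemize}

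\textbf{Expected obstacle.} There is no substantive obstacle; both directions reduce to unpacking the definitions of $k$, $m$, and $W$, together with Lemma~\ref{lem:maximals}. The only subtlety is the verification that $k(x) \leq x$ always holds, which follows immediately from the fact that $I_y$ embeds into $I_x \iplus I_y$ via $b \mapsto b$ (taking the $0 \in I_x$ witness), so that $I_x \iplus I_y \subseteq I_x$ forces $I_y \subseteq I_x$. Once this observation is recorded, the whole proof fits in a few lines.
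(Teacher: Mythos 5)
Your proof is correct and follows essentially the same route as the paper: the forward direction by chaining Lemma~\ref{lem:maximals} along the zigzag, and the converse by taking $x_i' := k(x_i)$ and $x_0 := m(k(x_1)) = m(k(x_2))$ as witnesses. Your explicit verification that $k(x) \leq x$ (via $I_{k(x)} \subseteq I_x \iplus I_{k(x)} \subseteq I_x$) is a detail the paper leaves implicit, and it is accurate.
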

\begin{proof}
Suppose $x_1 W x_2$. Pick $x_1', x_2', x_0 \in X$ as in the definition of $W$. Repeated applications of Lemma~\ref{lem:maximals} yield
\[ m(k(x_1)) = m(k(x_1')) = m(k(x_0)) = m(k(x_2')) = m(k(x_2)).\]
Conversely, suppose $m(k(x_1)) = m(k(x_2))$. Then $x_i \geq k(x_i) =: x_i'$, and $x_0 := m(k(x_1)) = m(k(x_2)) \geq k(x_i)$ for $i = 1,2$. Hence, $x_1 W x_2$.
\end{proof}
Let us write $[x]_{W}$ for the equivalence class of $x \in X$ under $W$, 
 $X/{W}$ for the quotient set, and 
 \[
q \colon X \longrightarrow X/{W}
 \]
 for the natural quotient map $x \mapsto [x]_{W}$. Since $W=\ker(m \circ k)$ by Lemma \ref{lem:kaplansky1}, there is a unique (set-theoretic) function
 $f\colon X/W\to Z$ making the diagram
\begin{equation}\label{eq:f}
\begin{tikzpicture}[baseline=(current  bounding  box.center)]
\matrix (m) [matrix of math nodes, row sep=4em, column sep=4em, text height=1.5ex, text depth=0.25ex] 
{X & X/W \\
  Z  &  \\};

\path[->] (m-1-1) edge node[left] {$m \circ k$} (m-2-1); 
\draw[->,dashed] (m-1-2) edge node[right] {$f$} (m-2-1);
\path[->] (m-1-1) edge node[above] {$q$} (m-1-2);
\end{tikzpicture}
\end{equation}
commute, and this function $f$ is a bijection.
\begin{lem}\label{lem:kaplansky2} The map $m \circ k \colon (X,\tau^{\downarrow}) \to (Z,\tau^{\downarrow})$ is continuous.
\end{lem}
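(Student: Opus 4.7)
The plan is to exploit the relationship between the three topologies on $X$ together with Lemma~\ref{lem:maximals}. Recall from Section~\ref{s:stone-priestley} that the open sets of $\tau^{\downarrow}$ are \emph{exactly} the open downsets of the Priestley space $(X,\tau^p,\leq)$. Hence, to show that $(m\circ k)^{-1}(V)$ is $\tau^{\downarrow}$-open for each $\tau^{\downarrow}$-open set $V\subseteq Z$, it suffices to verify two things: (i) $(m\circ k)^{-1}(V)$ is open in $(X,\tau^p)$; and (ii) $(m\circ k)^{-1}(V)$ is a downset in $(X,\leq)$.

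For (i), I would simply compose known continuities: by Proposition~\ref{prop:kcontinuity}.3, $k\colon(X,\tau^p)\to(Y,\tau^{\downarrow})$ is continuous, and by Proposition~\ref{prop:mretract}, $m\colon(Y,\tau^{\downarrow})\to(Z,\tau^{\downarrow})$ is continuous. Hence the composite $m\circ k\colon(X,\tau^p)\to(Z,\tau^{\downarrow})$ is continuous, so $(m\circ k)^{-1}(V)$ is $\tau^p$-open.

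For (ii), I would appeal directly to Lemma~\ref{lem:maximals}: if $x\leq x'$ in $X$, then $m(k(x))=m(k(x'))$, so membership of $x'$ in $(m\circ k)^{-1}(V)$ transfers to $x$. Consequently $(m\circ k)^{-1}(V)$ is a downset, and combining with (i) yields that it is $\tau^{\downarrow}$-open, proving continuity.

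There is no real obstacle here: the whole content has already been packed into Lemma~\ref{lem:maximals} and Proposition~\ref{prop:kcontinuity}. The only subtle point worth flagging is that Proposition~\ref{prop:kcontinuity} gives continuity of $k$ only from the finer Priestley topology, not from $\tau^{\downarrow}$ itself; Lemma~\ref{lem:maximals} is what upgrades this to $\tau^{\downarrow}$-continuity of the composite $m\circ k$, by forcing the preimages to be downsets and thereby exploiting the description of $\tau^{\downarrow}$ as the open downsets of $\tau^p$.
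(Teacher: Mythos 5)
Your proof is correct and follows exactly the same route as the paper: compose the $\tau^p$-continuity of $k$ with the continuity of $m$, then use Lemma~\ref{lem:maximals} to see that preimages under $m\circ k$ are downsets, hence open downsets of $(X,\tau^p,\leq)$, i.e.\ $\tau^{\downarrow}$-open. Nothing is missing.
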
 
\begin{proof}
By Proposition~\ref{prop:kcontinuity}, $k \colon (X,\tau^p) \to (Y,\tau^\downarrow)$ is continuous, and by Proposition~\ref{prop:mretract}, $m \colon (Y,\tau^\downarrow) \to (Z,\tau^\downarrow)$ is continuous, so $m \circ k \colon (X,\tau^p) \to (Z,\tau^\downarrow)$ is continuous. Now note that, by Lemma~\ref{lem:maximals}, for any subset $T \subseteq Z$, $(m \circ k)^{-1}(T)$ is both an upset and a downset in $(X,\leq)$. In particular, for any open set $U$ in $(Z,\tau^\downarrow)$, we get that $(m \circ k)^{-1}(U)$ is an open downset in $(X,\tau^p,\leq)$, and therefore it is open in $(X,\tau^\downarrow)$.\footnote{Note that $(m \circ k)^{-1}(U)$ is also open in $\tau^\uparrow$, so that the function $m \circ k$ remains continuous when we put the stronger topology $\tau^\downarrow \cap \tau^\uparrow$ on $X$, but we will not need this in what follows.}
\end{proof}

 \begin{prop}\label{prop:kaplansky}Let $\sigma$ denote the quotient topology that $X/W$ inherits from $(X,\tau^{\downarrow})$. Then the unique function  $f\colon (X/W,\sigma)\to (Z,\tau^{\downarrow})$ making \textup{(\ref{eq:f})} commute is a homeomorphism.
 \end{prop}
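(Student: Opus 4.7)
The plan is to show that $f$ is a continuous bijection from a compact space to a compact Hausdorff space, which automatically makes it a homeomorphism. The bijectivity of $f$ has already been established just before the proposition statement, so only continuity and the topological properties of the domain and codomain remain.

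First, I would argue that $f$ is continuous. By Lemma~\ref{lem:kaplansky2}, the composite $m \circ k \colon (X,\tau^\downarrow) \to (Z,\tau^\downarrow)$ is continuous. Since $f \circ q = m \circ k$ by construction of $f$, and $\sigma$ is the quotient topology induced by $q$ from $\tau^\downarrow$, the universal property of the quotient topology gives the continuity of $f \colon (X/W,\sigma) \to (Z,\tau^\downarrow)$ at once.

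Next, I would observe that $(X/W,\sigma)$ is compact: it is the image under the continuous surjection $q$ of $(X,\tau^\downarrow)$, and the latter is compact because it is a spectral space. On the codomain side, $(Z,\tau^\downarrow)$ is compact Hausdorff by Proposition~\ref{prop:mretract}.

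Finally, a standard topological fact (a continuous bijection from a compact space to a Hausdorff space is automatically a homeomorphism, since it is a closed map) concludes that $f$ is a homeomorphism. No step is really an obstacle here; the substantive work has all been done in Lemmas~\ref{lem:kaplansky1} and~\ref{lem:kaplansky2} and in Proposition~\ref{prop:mretract}, and what remains is pure point-set topology.
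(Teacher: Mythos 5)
Your proposal is correct and follows exactly the paper's own argument: continuity of $f$ via Lemma~\ref{lem:kaplansky2} and the universal property of the quotient topology, compactness of $(X/W,\sigma)$ as the continuous image of the compact space $(X,\tau^\downarrow)$, Hausdorffness of $(Z,\tau^\downarrow)$ from Proposition~\ref{prop:mretract}, and the standard fact that a continuous bijection from a compact space to a Hausdorff space is a homeomorphism. No differences worth noting.
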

\begin{proof}
Note that $f$ is continuous by Lemma~\ref{lem:kaplansky2}, the fact that (\ref{eq:f}) commutes, and the definition of the quotient topology.
Since $q \colon (X,\tau^\downarrow) \to (X/W,\sigma)$ is continuous and onto by definition,  by \cite[Thm. 17.7]{Willard} the space $(X/W,\sigma)$ is compact because $(X,\tau^\downarrow)$ is. Recall that $(Z,\tau^\downarrow)$ is a Hausdorff space. So $f$ is a continuous bijection from a compact space to a Hausdorff space, and therefore $f$ is a homeomorphism by \cite[Thm. 17.14]{Willard}.
\end{proof}

We are  ready to establish the MV-algebraic generalization of Kaplansky's Theorem.
\begin{thm}[MV-algebraic Kaplansky's Theorem]\label{thm:kaplansky}MV-algebras with isomorphic underlying lattices have homeomorphic maximal MV-spectra, i.e.\ collections of maximal MV-ideals equipped with the spectral topology $\tau^{\downarrow}$.
\end{thm}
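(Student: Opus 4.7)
The plan is to show that the homeomorphism type of $(Z,\tau^{\downarrow})$ can be recovered purely from the underlying distributive lattice of $A$, by exploiting the fact that the relation $W$ defined in \eqref{eq:sim} refers only to the partial order $\leq$ on $X$, together with the identification of $(Z,\tau^{\downarrow})$ with a quotient of $(X,\tau^{\downarrow})$ given in Proposition~\ref{prop:kaplansky}.

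First, let $A$ and $A'$ be MV-algebras whose underlying distributive lattices are isomorphic, and denote the corresponding dual objects by $X, Y, Z, W$ and $X', Y', Z', W'$ respectively. By Corollary~\ref{cor:isohomeo}, any lattice isomorphism between $A$ and $A'$ corresponds to a homeomorphism $\phi \colon (X,\tau^{\downarrow}) \to (X',\tau^{\downarrow})$. Recall that the specialization order of $\tau^{\downarrow}$ is the reverse of $\leq$, and any homeomorphism of topological spaces preserves the specialization order; hence $\phi$ is an order-isomorphism with respect to $\leq$ on $X$ and $X'$.

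Next, observe that the binary relation $W$ introduced in \eqref{eq:sim} is defined exclusively in terms of $\leq$; consequently, the order-isomorphism $\phi$ takes $W$ bijectively onto $W'$. It therefore descends to a bijection $\bar{\phi}\colon X/W \to X'/W'$. Since $\phi$ is a homeomorphism $(X,\tau^{\downarrow})\to(X',\tau^{\downarrow})$ and the topologies $\sigma$ and $\sigma'$ are, by definition, the quotient topologies induced by $\tau^{\downarrow}$ and $\tau^{\downarrow\,\prime}$, it follows from the universal property of the quotient topology that $\bar{\phi}$ is a homeomorphism $(X/W,\sigma)\to(X'/W',\sigma')$.

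Finally, Proposition~\ref{prop:kaplansky} provides homeomorphisms $f\colon (X/W,\sigma)\to (Z,\tau^{\downarrow})$ and $f'\colon (X'/W',\sigma')\to (Z',\tau^{\downarrow})$. The composition $f'\circ \bar{\phi}\circ f^{-1}$ is the required homeomorphism $(Z,\tau^{\downarrow})\to (Z',\tau^{\downarrow})$. The main conceptual point, which is where the bulk of the paper's work goes, is that although $Z$ is a priori defined using the MV-algebraic structure, the relation $W$ and the quotient topology $\sigma$ reconstruct $(Z,\tau^{\downarrow})$ from the lattice alone; once the two preceding sections have established this, the deduction of the theorem itself is essentially formal, and no further obstacle remains.
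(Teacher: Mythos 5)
Your proposal is correct and follows the same route as the paper: the paper's own proof simply observes that $(Z,\tau^{\downarrow})$ is recoverable from $(X,\tau^{\downarrow})$ up to homeomorphism via Proposition~\ref{prop:kaplansky}, since $W$ is defined purely in terms of the specialization order of $\tau^{\downarrow}$. You have merely spelled out the formal deduction (transporting $W$ along the homeomorphism from Corollary~\ref{cor:isohomeo} and passing to quotients) that the paper leaves implicit.
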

\begin{proof} It suffices to show that we can recover $(Z,\tau^{\downarrow})$ from $(X,\tau^{\downarrow})$ only, up to homeomorphism. This is precisely the content of Proposition \ref{prop:kaplansky}, upon noting that (\ref{eq:sim}) defines the relation $W$ in terms of the specialization order of $(X,\tau^{\downarrow})$.
\end{proof}
For any compact Hausdorff space $S$, let $\C{(S, [0,1])}$ denote the MV-algebra of all continuous $[0,1]$-valued functions on $S$, where $[0,1]$ is endowed with its standard (Euclidean) topology, and the  operations are defined pointwise from the standard MV-algebra $[0,1]$. A subset $B\subseteq \C{(S, [0,1])}$ is said to \emph{separate the points of $S$} if for each $p\neq q \in S$ there is $b \in B$ with $b(p)=0$, $b(q)>0$. We can now obtain (a stronger version of)  \cite[Thm. 1 and Thm. 6]{kaplansky}.
\begin{cor}[Kaplansky's Theorem]\label{cor:kaplanskyoriginal} Let  $S_{1}, S_{2}$ be any two compact Hausdorff spaces. \textup{(i)} \, Suppose $A_{i}\subseteq  \C{(S_{i},[0,1])}$ ($i = 1,2$) are MV-subalgebras, and that each $A_i$ separates the points of $S_{i}$. If $A_{1}$ and $A_{2}$ are isomorphic as lattices, then $S_{1}$ and $S_{2}$ are homeomorphic. \textup{(ii)} \,
The lattices $\C{(S_{1},[0,1])}$ and $\C{(S_2, [0,1])}$ are isomorphic if, and only if, the spaces $S_{1}$ and $S_{2}$ are homeomorphic.
\end{cor}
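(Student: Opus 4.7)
The plan is to reduce the corollary to Theorem~\ref{thm:kaplansky} by showing that for every compact Hausdorff space $S$ and every separating MV-subalgebra $A \subseteq \C{(S,[0,1])}$, the maximal MV-spectrum $(Z(A),\tau^\downarrow)$ is homeomorphic to $S$ through the natural evaluation map. Once this is in place, part (i) is immediate: a lattice isomorphism $A_1 \isom A_2$ yields, by Theorem~\ref{thm:kaplansky}, a homeomorphism $(Z(A_1),\tau^\downarrow) \isom (Z(A_2),\tau^\downarrow)$, and composing with the two evaluation homeomorphisms gives $S_1 \isom S_2$. Part (ii) will follow from (i) applied to $A_i := \C{(S_i,[0,1])}$, noting that this algebra separates points by Urysohn's lemma (compact Hausdorff spaces are normal), together with the trivial observation that a homeomorphism $S_1 \isom S_2$ induces an MV-algebraic --- hence lattice --- isomorphism between the function algebras by precomposition.

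The core step is thus the construction and analysis of the map $\phi \colon S \to Z(A)$ defined by $\phi(p) := M_p$, where $M_p := \{a \in A \mid a(p)=0\}$. First I would verify that $M_p$ is a maximal MV-ideal: it is the kernel of the evaluation MV-homomorphism $\mathrm{ev}_p \colon A \to [0,1]$, $a \mapsto a(p)$, so $A/M_p$ embeds into $[0,1]$; since every non-zero element $c \in [0,1]$ satisfies $nc = 1$ for sufficiently large $n$, every subalgebra of $[0,1]$ is simple, whence $M_p$ is maximal. Injectivity of $\phi$ is immediate from the separation hypothesis: if $p \neq q$ pick $b \in A$ with $b(p)=0$ and $b(q)>0$, so that $b \in M_p \setminus M_q$.

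For surjectivity, the idea is a compactness argument: given a maximal MV-ideal $M$, the family $\{a^{-1}(0) \mid a \in M\}$ consists of non-empty closed subsets of $S$ and has the finite intersection property. Non-emptiness uses that if $a \in M$ had $a^{-1}(0) = \emptyset$, then by compactness $a \geq \varepsilon$ pointwise for some $\varepsilon > 0$, hence $na = 1$ in $A$ for $n$ large enough, forcing $1 \in \langle a\rangle \subseteq M$, a contradiction. The finite intersection property follows from $(a \oplus b)^{-1}(0) = a^{-1}(0) \cap b^{-1}(0)$ and closure of $M$ under $\oplus$. Compactness of $S$ then yields $p \in \bigcap_{a \in M} a^{-1}(0)$, giving $M \subseteq M_p$, and maximality of $M$ forces equality. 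Finally, for the topology, observe that the basic closed sets of $(Z(A),\tau^\downarrow)$ are the sets $\{M \in Z(A) \mid a \in M\}$ for $a \in A$, and $\phi^{-1}$ maps these to the closed sets $a^{-1}(0) \subseteq S$; hence $\phi$ is continuous. Since $(Z(A),\tau^\downarrow)$ is compact Hausdorff by Proposition~\ref{prop:mretract} and $S$ is compact Hausdorff, the continuous bijection $\phi$ is automatically a homeomorphism.

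The main obstacle is the surjectivity of $\phi$, which relies both on the archimedean character of $[0,1]$ (to exclude elements of $M$ with empty zero set) and on compactness of $S$; all other steps are straightforward consequences of Theorem~\ref{thm:kaplansky}, the definitions of the MV-algebraic operations on $\C{(S,[0,1])}$, and standard point-set topology such as the fact that a continuous bijection from a compact space to a Hausdorff space is a homeomorphism.
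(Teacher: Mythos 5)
Your proof is correct and follows essentially the same route as the paper: both parts reduce to Theorem~\ref{thm:kaplansky} via the homeomorphism between $S$ and the maximal MV-spectrum of a separating MV-subalgebra of $\C{(S,[0,1])}$, with (ii) obtained from (i) by Urysohn's Lemma. The only difference is that the paper simply cites this homeomorphism from \cite[Thm. 3.4.3]{cdm}, whereas you prove it directly via the evaluation map $p \mapsto M_p$ (maximality from simplicity of subalgebras of $[0,1]$, injectivity from separation, surjectivity from the finite intersection property of zero sets, and the compact-to-Hausdorff bijection argument); that self-contained argument is sound.
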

\begin{proof} (i) \, Let $Z_{i}$  denote the maximal MV-spectrum of $A_{i}$,  $i=1,2$, equipped with the spectral topology $\tau^{\downarrow}$. Since $A_{i}$ separates the points of $S_{i}$, by \cite[Thm. 3.4.3]{cdm} it follows that $S_i$ is homeomorphic to $Z_{i}$, $i = 1,2$. By assumption, the lattice reducts of $A_1$ and $A_2$ are isomorphic. By Theorem~\ref{thm:kaplansky}, $Z_{1}$ and $Z_{2}$ are homeomorphic. So $S_1$ is homeomorphic to $S_2$.

(ii)\,
For the non-trivial implication, write $A_i$ for the MV-algebra $\C{(S_i,[0,1])}$, $i = 1,2$. Now $S_i$ is compact Hausdorff by hypothesis, and therefore by Urysohn's Lemma the elements of $A_i$ separate the points of $S_i$.  By \cite[Thm. 3.4.3]{cdm} it follows that $S_i$ is homeomorphic to the maximal MV-spectrum of $A_i$ ($i = 1,2$) with the spectral topology. Now apply (i).
\end{proof}
Theorem \ref{thm:kaplansky} does not extend without substantial modifications to prime MV-spectra, as the following example shows. 
\begin{exa}\label{ex:kap}(For readers familiar with the $\Gamma$ functor and lattice-groups \cite{Mun1986}, the MV-algebra we consider here is the unit interval $\Gamma(\,\Q \vec{\times} \Q,(1,1)\,)$, where $\Q\vec{\times}\Q$ denotes, as usual, the lexicographic product of the ordered additive abelian group $\Q$ of rational numbers with itself.)  Totally order the set-theoretic Cartesian product $\Q\times \Q$ lexicographically. 
That is, define $(a,b)\preceq (a',b')$, for $a,b,a',b' \in \Q$, to mean  $a< a'$, or else $a=a'$, and then $b\leq b'$. Consider the interval $Q':=\{x\in \Q\times \Q\mid  (0,0)\preceq x \preceq (1,1)\}$.
Define the operation $\oplus$ on $Q'$ by setting $(a,b)\oplus(a',b'):=\min{\{(a+a',b+b'),(1,1)\}}$, where the minimum is taken with respect to $\preceq$. Further define a unary operation $\neg$ on $Q'$ by $\neg (a,b):= (1-a,1-b)$. It is straightforward to check that $Q'$ is an MV-algebra under the binary operation $\oplus$, the unary operation $\neg$, and the constant $(0,0)$. A further verification shows that the underlying order of this MV-algebra coincides with the restriction of $\preceq$ to $Q'$. Hence $Q'$ is a totally ordered MV-algebra whose underlying lattice is a dense countable chain with endpoints. And any two such chains are order-isomorphic \cite[Cor. 2.9]{rosenstein}. The subalgebra $Q=[0,1]\cap \Q$ of the standard MV-algebra $[0,1]$ has as underlying lattice a dense countable chain with endpoints, too. Then the MV-algebras $Q'$ and $Q$ have isomorphic underlying lattices. Now direct inspection shows that $Q$ is non-trivial and simple (=it has no non-trivial congruences) and therefore its prime MV-spectrum is the singleton $\{0\}$; whereas $Q'$ has a unique non-maximal prime MV-ideal --- namely, the MV-ideal $\{(0,0)\}$, which lies below the unique maximal MV-ideal $\{(0,b) \mid b \in \Q, b \geq 0\}$ --- whence its MV-spectrum is a doubleton.  \qed
\end{exa}
\section{Sheaf representations of MV-algebras}\label{sec:spectralproduct}
We  now combine the theory  developed in Section~\ref{sec:sheaf} to the specific information about dual spaces of MV-algebras obtained in Section~\ref{sec:decomp}. This leads to the two known sheaf representations of MV-algebras discussed in the Introduction; one over the space of prime MV-ideals, the other over the space of maximal MV-ideals. 
\subsection{Sheaves of MV-algebras indexed over the prime MV-spectrum}\label{subsec:sheaves_DP}
The space $(Y,\tau^\downarrow)$ of prime MV-ideals of the MV-algebra $A$ with the spectral topology is a spectral space, and hence in particular a stably compact space (Example~\ref{exa:stabcompsp}). The map $k : X \to Y$ defined in Definition~\ref{def:k} is continuous from $(X,\tau^p)$ to $(Y,\tau^\downarrow)$ (Proposition~\ref{prop:kcontinuity}), and surjective, as readily follows from Proposition~\ref{prop:fixpointsk}. Thus, the following lemma suffices to obtain a sheaf representation from the map $k$.

\begin{lem}\label{lem:PforDuPo}
The function $k \colon (X,\tau^p,\leq) \to (Y,\tau^\downarrow)$ satisfies property \textup{(P)} from Theorem~\ref{thm:decompositiongivessheaf}.
\end{lem}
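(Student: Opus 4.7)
The plan is to show that $W := \bigcup_{l=1}^n (K_l \cap k^{-1}(U_l))$ is a clopen downset of $(X,\tau^p,\leq)$ by verifying three things in turn: that $W$ is a downset, that $W$ is $\tau^p$-closed, and that $W^c$ is $\tau^p$-closed (so $W$ is $\tau^p$-open).

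First I would establish the downset property using the Interpolation Lemma~\ref{lem:interpolation}. Given $x' \leq x$ with $x \in K_l \cap k^{-1}(U_l)$ for some $l$, I choose $l'$ with $k(x') \in U_{l'}$, which is available because $(U_l)$ covers $Y$, and apply the Interpolation Lemma to obtain $x''$ with $x' \leq x'' \leq x$, $k(x'') \geq k(x)$ and $k(x'') \geq k(x')$. Each $U_l$, being open in $\tau^\uparrow|_Y = \rho^\dual$ whose specialization order is $\leq$, is an upset, so $k(x'') \in U_l \cap U_{l'}$; and since $K_l$ is a downset with $x'' \leq x \in K_l$, we have $x'' \in K_l$. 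The compatibility condition (\ref{label:star}) then yields $x'' \in K_{l'}$, and the downset property of $K_{l'}$ applied to $x' \leq x''$ gives $x' \in K_{l'}$, so $x' \in K_{l'} \cap k^{-1}(U_{l'}) \subseteq W$.

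For closedness the key move will be to refine the cover. By Proposition~\ref{prop:YasKPrin}, $(Y,\tau^\downarrow|_Y)$ is spectral, hence stably compact by Example~\ref{exa:stabcompsp}, and Theorem~\ref{thm:stabcompsp}.1 then makes $(Y,\rho^\dual) = (Y,\tau^\uparrow|_Y)$ stably compact and, in particular, compact. Writing each $U_l$ as a union of basic opens $\hat{u}^c \cap Y$ with $u \in A$ and extracting a finite subcover by compactness, I obtain $(V_j)_{j=1}^N$ with each $V_j = \hat{u_j}^c \cap Y$ contained in some $U_{l_j}$. Using (\ref{label:star}) one checks $W = \bigcup_{j=1}^N (K_{l_j} \cap k^{-1}(V_j))$: given $x \in K_l \cap k^{-1}(U_l)$, pick $j_0$ with $k(x) \in V_{j_0} \subseteq U_{l_{j_0}}$; then $x \in k^{-1}(U_l \cap U_{l_{j_0}})$, and (\ref{label:star}) applied to $l$ and $l_{j_0}$ forces $x \in K_{l_{j_0}}$. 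By the argument in the proof of Proposition~\ref{prop:kcontinuity}.2, each $k^{-1}(\hat{u_j}^c)$ is $\tau^p$-closed; hence $W$ is a finite union of intersections of a clopen with a closed set, so $\tau^p$-closed.

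Finally, I would replay this refinement argument on $W^c$ to get that $W$ is $\tau^p$-open. The condition (\ref{label:star}) forces the truth value of ``$x \in K_l$'' to be constant as $l$ ranges over the non-empty set $L(x) := \{l : k(x) \in U_l\}$, whence $W^c = \bigcup_{l=1}^n (K_l^c \cap k^{-1}(U_l))$. The same refinement then expresses $W^c$ as $\bigcup_{j=1}^N (K_{l_j}^c \cap k^{-1}(V_j))$, again a finite union of $\tau^p$-closed sets. Combining the three properties, $W$ is a clopen downset. The main subtlety to watch will be ensuring that (\ref{label:star}) transfers correctly both to the refined cover (which is immediate from $V_j \subseteq U_{l_j}$) and to the complements $K_l^c$ (which is immediate because (\ref{label:star}) is a statement of set equality restricted to $k^{-1}(U_l \cap U_m)$).
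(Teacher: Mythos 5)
Your proof is correct and takes essentially the same route as the paper's: refine the cover to basic compact-opens of $\tau^\uparrow$ so that Proposition~\ref{prop:kcontinuity} makes the preimages under $k$ Priestley-closed (giving closedness of the union), identify the complement with the analogous union built from the $K_l^{\,c}$ (giving openness), and use the Interpolation Lemma~\ref{lem:interpolation} for the downset property. The only difference is that you perform the refinement explicitly and verify that (\ref{label:star}) and the union are preserved under it, whereas the paper simply passes to a finer cover at the outset; this is a harmless variation in bookkeeping, not in substance.
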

\begin{proof}
Let $(U_l)_{l=1}^n$ be a cover of $Y$ by open sets from the topology $(\tau^\downarrow)^\dual = \tau^\uparrow$, and let $(K_l)_{l=1}^n$ be a finite collection of clopen downsets of $X$ which satisfies, for any $l,m \in \{1,\dots,n\}$: $K_l \cap k^{-1}(U_l \cap U_m) = K_m \cap k^{-1}(U_l \cap U_m)$. We need to show that the set 
\begin{align}\label{eq:K}
K := \bigcup_{l=1}^n (K_l \cap k^{-1}(U_l))
\end{align}
is a clopen downset in $X$. 

Since the compact-opens form a basis for $(Y,\tau^\uparrow)$, we may assume that each $U_l$ is compact-open in $\tau^\uparrow$, by passing to a finer cover, which we may still assume to be finite since $Y$ is compact. By the Stone-Priestley Duality Theorem, there are $u_1,\dots,u_n \in A$ such that $U_l = \hat{u_l}^{\,c}$ for each $l \in \{1,\dots,n\}$. Hence, by Proposition~\ref{prop:kcontinuity}, $k^{-1}(U_l)$ is closed for each $l$. Since each $K_l$ is closed, it now follows that $K$ is closed. 

To show that $K$ is open, we  \emph{claim} that $K^c = \bigcup_{l=1}^n (K_l^{\,c} \cap k^{-1}(U_l))$, which is clearly closed. To prove this claim, first suppose $x \in K^c$. Since the $U_l$'s cover $Y$, pick $l$ such that $k(x) \in U_l$. Since $x \not\in K$, we have $x \not\in K_l$, so $x \in K_l^{\,c}\, \cap\, k^{-1}(U_l)$. For the converse inclusion, suppose $x \in K_m^{\, c}\, \cap \, k^{-1}(U_m)$ for some $m$. To show that $x \in K^c$, we need to show that $x \not\in K_l \cap k^{-1}(U_l)$ for each $l$. If $x \in k^{-1}(U_l)$, we have $x \in k^{-1}(U_l \cap U_m)$. By the assumption on the $K_l$'s and the fact that $x \not\in K_m$, we get that $x \not\in K_l$. This proves the \emph{claim}, so $K$ is clopen.

Finally, we show that $K$ is a downset. Suppose $x' \in K$ and $x \leq x'$. We prove that $x \in K$. Since $x' \in K$, pick $l \in \{1, \dots, n\}$ such that $x' \in K_l \cap k^{-1}(U_l)$ and since the $U_l$'s cover $Y$, pick $m \in \{1,\dots,n\}$ such that $k(x) \in U_m$. By the Interpolation Lemma~\ref{lem:interpolation}, pick $x'' \in X$ such that $x \leq x'' \leq x'$, $k(x'') \geq k(x)$ and $k(x'') \geq k(x')$. Then $x'' \in K_l$ since $K_l$ is a downset. Also, since $k(x') \in U_l$ by the choice of $l$, we get $k(x'') \in U_l$ because $U_l$ is an upset, and similarly, we get  $k(x'') \in U_m$ because $k(x)$ is in the upset $U_m$. Hence, $x'' \in K_l \cap k^{-1}(U_l \cap U_m) = K_m \cap k^{-1}(U_l \cap U_m)$ by the assumption on the $K_l$'s. Since $K_m$ is a downset we get that $x \in K_m$, so $x \in K_m \cap k^{-1}(U_m) \subseteq K$.
\end{proof}

To obtain the first sheaf representation from this result, we reason as follows. Recall from Section~\ref{sec:sheaf} that the underlying set of the \'etale space $E$ is defined to be the disjoint union of the algebras $A_y$, for $y \in Y$. Here, $A_y$ is defined to be the quotient of $A$ corresponding to the subspace $k^{-1}({\uparrow}y)$. Now recall from Proposition~\ref{prop:chainusingk} that $k^{-1}({\uparrow}y) = C_y$, and by Proposition~\ref{prop:quotientdually}, the algebra corresponding to the subspace $C_y$ is $A/I_y$. Hence, $A_y = A/I_y$, the map $A \onto A_y$ is an MV-algebra homomorphism, and $E$ is the disjoint union of the MV-algebras $A_y$, for $y \in Y$, with the topology given by the global sections $s_a$. This is  exactly the \'etale space  used in \cite[Chapt. 5]{Yang06} and\cite[Sec. 2.1]{DuPo2010}, so that we obtain, by applying Corollary~\ref{cor:dlesheaf} in light of Lemma \ref{lem:PforDuPo}:
\begin{cor}[{\cite[Prop. 5.1.2 and Rem. 5.3.11]{Yang06}} and {\cite[Thm. 3.12]{DuPo2010}}]\label{cor:DuPo}
Any MV-algebra is isomorphic to the MV-algebra of global sections of the \'etale space over $(Y, \tau^\uparrow)$  whose stalk at $y \in Y$ is the quotient by the prime MV-ideal $I_y$.\qed
\end{cor}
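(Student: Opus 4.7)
The plan is to apply Corollary \ref{cor:dlesheaf} directly to the map $k \colon X \to Y$, since virtually all of the necessary technical work has already been carried out earlier in the paper; this is essentially an assembly step.

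First I would check the hypotheses of Theorem \ref{thm:decompositiongivessheaf} for the map $k$. The target $(Y,\tau^{\downarrow})$ is spectral by Proposition \ref{prop:YasKPrin}, hence stably compact by Example \ref{exa:stabcompsp}, and its co-compact dual is $\tau^{\uparrow}$. The map $k$ is continuous from $(X,\tau^p,\leq)$ to $(Y,\tau^{\downarrow})$ by Proposition \ref{prop:kcontinuity}, and surjective by Proposition \ref{prop:fixpointsk} (since $k$ fixes every element of $Y$). The patching property (P) is exactly the content of Lemma \ref{lem:PforDuPo}.

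Next I would identify the stalks concretely. Corollary \ref{cor:dlesheaf} requires that the lattice quotient $A \twoheadrightarrow A_y$ attached to the closed subspace $X_y := k^{-1}({\uparrow} y)$ be an MV-algebra homomorphism. By Proposition \ref{prop:chainusingk} we have $k^{-1}({\uparrow} y) = C_y = S_{I_y}$, and by the second clause of Proposition \ref{prop:quotientdually} the quotient of $A$ corresponding to $S_{I_y}$ is the MV-algebra quotient $A/I_y$. In particular $A \twoheadrightarrow A_y$ is the natural MV-homomorphism $A \twoheadrightarrow A/I_y$, so the hypothesis of Corollary \ref{cor:dlesheaf} that each stalk quotient be a $\V$-homomorphism is automatic here, with $\V$ the variety of MV-algebras.

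Finally, I would invoke Corollary \ref{cor:dlesheaf}: the associated étale space $p \colon (E,\sigma) \twoheadrightarrow (Y,\tau^{\uparrow})$, whose fibre over $y$ is the MV-algebra $A/I_y$, is a sheaf representation of $A$, i.e.\ the canonical map $a \mapsto s_a$ is an MV-algebra isomorphism from $A$ onto the algebra of continuous global sections. This is exactly the statement of Corollary \ref{cor:DuPo}.

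There is no real obstacle left: the technical heart of the argument was packaged into Theorem \ref{thm:decompositiongivessheaf} and Lemma \ref{lem:PforDuPo}; the present corollary is only a matter of matching up the stalks via Propositions \ref{prop:chainusingk} and \ref{prop:quotientdually} and checking that the quotients are MV-homomorphisms rather than merely lattice homomorphisms.
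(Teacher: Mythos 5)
Your proposal is correct and follows exactly the route the paper takes: verifying the hypotheses of Theorem~\ref{thm:decompositiongivessheaf} for $k$ via Propositions~\ref{prop:YasKPrin}, \ref{prop:kcontinuity} and \ref{prop:fixpointsk} together with Lemma~\ref{lem:PforDuPo}, identifying the stalks as $A/I_y$ via Propositions~\ref{prop:chainusingk} and \ref{prop:quotientdually}, and concluding with Corollary~\ref{cor:dlesheaf}. Nothing is missing.
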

\subsection{The Chinese Remainder Theorem}\label{ss:CRT}
We digress to compare our Lemma~\ref{lem:PforDuPo} to  results in the literature.
The following purely algebraic statement easily follows from Lemma~\ref{lem:PforDuPo} using duality.
\begin{cor}[Chinese Remainder Theorem for principal ideals]\label{cor:CRT}
Let $I_1, \dots, I_n$ be finitely many  principal MV-ideals of the MV-algebra $A$ satisfying $\bigcap_{l=1}^{n} I_{l}=\{0\}$, and let $a_1, \dots, a_n \in A$ be such that
\[ a_i \equiv a_j \mod I_i \oplus I_j\]
for all $i,j = 1, \dots, n$. Then there exists a unique $b \in A$ such that
\[ b \equiv a_l \mod I_l\]
for each $l = 1, \dots, n$.
\end{cor}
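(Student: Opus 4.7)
The plan is to translate the Chinese Remainder Theorem into the geometric setting of Lemma~\ref{lem:PforDuPo} and read off $b$ from the resulting clopen downset of $X$; uniqueness will then fall out of an elementary MV-algebraic argument. Since each MV-ideal is principal, write $I_l = \langle c_l\rangle$, and set
\[ V_l := Y \setminus \hat{c_l} = \{y \in Y \mid c_l \in I_y\}, \qquad K_l := \hat{a_l}.\]
Each $V_l$ is open in $(Y,\tau^\uparrow)$ by the very definition of the co-spectral topology, and each $K_l$ is a clopen downset of $(X,\tau^p,\leq)$.

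I would first check that the $V_l$ cover $Y$: if some $y \in Y$ were missed by every $V_l$, one could pick $a_l \in I_l \setminus I_y$ for each $l$; the meet $a_1\wedge\cdots\wedge a_n$ would then lie in $\bigcap_l I_l = \{0\}$ yet also in the prime (hence meet-closed) filter $F_y$, a contradiction. Next, I would prove $k^{-1}(V_l) = S_{I_l}$: by the defining maximality property of $k$ in Definition~\ref{def:k}, $I_l \subseteq I_{k(x)}$ is equivalent to $I_x \iplus I_l \subseteq I_x$, which is the very definition of $x \in S_{I_l}$. A short computation with Proposition~\ref{prop:additionproperties} (monotonicity and associativity of $\iplus$) then gives $S_{I_l \oplus I_m} = S_{I_l} \cap S_{I_m}$. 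Combined with Proposition~\ref{prop:quotientdually}, this shows that the algebraic compatibility $a_l \equiv a_m \mod I_l \oplus I_m$ is precisely the set-theoretic condition \textup{(*)} of Lemma~\ref{lem:PforDuPo} for the chosen $V_l$ and $K_l$.

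Applying Lemma~\ref{lem:PforDuPo} produces a clopen downset $\bigcup_l (\hat{a_l}\cap S_{I_l})$ of $X$, and by Priestley duality there is a unique $b \in A$ with $\hat{b}$ equal to this set. To verify $b \equiv a_l \mod I_l$, which by Proposition~\ref{prop:quotientdually} amounts to $\hat{b}\cap S_{I_l} = \hat{a_l}\cap S_{I_l}$, I would expand $\hat{b}\cap S_{I_l} = \bigcup_m(\hat{a_m}\cap S_{I_l}\cap S_{I_m})$, use compatibility to replace each $\hat{a_m}$ with $\hat{a_l}$ on $S_{I_l}\cap S_{I_m}$, and conclude using $\bigcup_m S_{I_m} = k^{-1}(\bigcup_m V_m) = k^{-1}(Y) = X$. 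Uniqueness is immediate: the MV-algebraic distance $d(b,b') := (b\ominus b')\oplus (b'\ominus b)$ of two solutions $b, b'$ lies in each $I_l$ and hence in $\bigcap_l I_l = \{0\}$, forcing $b = b'$.

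The main subtlety — and the reason the hypothesis that each $I_l$ is principal is indispensable — is establishing that $(V_l)$ is a $\tau^\uparrow$-open cover of $Y$. For an arbitrary MV-ideal $J$ the set $\{y \mid J \subseteq I_y\}$ is an intersection of basic $\tau^\uparrow$-opens and in general only $\tau^\downarrow$-closed, so Lemma~\ref{lem:PforDuPo} would not directly apply. The remaining ingredients — the identification $k^{-1}(V_l) = S_{I_l}$ and the translation of the compatibility hypothesis — follow essentially from the dictionary built up in Sections~\ref{s:mvdual} and~\ref{sec:decomp}.
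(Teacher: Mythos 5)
Your proposal is correct and follows essentially the same route as the paper's own (sketched) proof: principal MV-ideals yield $\tau^\uparrow$-compact-open sets $\hat{c_l}^{\,c}\cap Y$ covering $Y$ because $\bigcap_l I_l=\{0\}$, the compatibility hypothesis translates into condition \textup{(*)} of property (P), and Lemma~\ref{lem:PforDuPo} produces the clopen downset dual to $b$. Your write-up merely fills in the details the paper leaves implicit (the identification $k^{-1}(V_l)=S_{I_l}$, the verification that $b$ works, and the uniqueness via $d(b,b')\in\bigcap_l I_l$), all of which are sound.
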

\begin{proof}[Sketch of Proof]
Principal MV-ideals correspond to compact open subsets of $(Y,\tau^\uparrow)$. Hence, $(I_{l})_{l=1}^{n}$ yields a family $(U_{l})_{l=1}^{n}$ of compact open subsets. Since $\bigcap_{l=1}^{n}I=\{0\}$, the family  $(U_{l})_{l=1}^{n}$ covers $Y$. The condition on  $(a_l)_{l=1}^{n}$ says that their corresponding clopen downsets $(K_{l})_{l=1}^{n}$ in the Priestley space $(X,\tau^{p},\leq)$ satisfy  $(\ref{label:star})$ in property (P) of Theorem \ref{thm:decompositiongivessheaf}, using the map $k$ in place of $q$. Lemma~\ref{lem:PforDuPo} now yields the desired $b$ via its dual clopen downset (\ref{eq:K}).\end{proof}

In \cite[Thm. 2.6]{FerrLett2011} (cf.\  also  \cite[Lem. 1]{DuPo2012}), the authors prove a general Chinese Remainder Theorem for MV-algebras, where the restriction to principal ideals is not necessary. In the case of principal ideals, the proof  exhibits a specific MV-algebraic expression for   $b$  in the elements  $(a_l)_{l=1}^n$ and $(u_l)_{l=1}^n$, whenever the $u_l$'s are elements of $A$ that generate the principal MV-ideals $(I_{l})_{l=1}^{n}$, i.e.\  satisfy  $U_l = \hat{u}_l^{\, c}$ for each $l=1,\ldots,n$. We will obtain such a stronger version in Subsection \ref{subsec:termdef}.
 We further observe that  Dubuc and Poveda's Pullback-Pushout Lemma \cite[Lem. 3.11]{DuPo2010},     a key ingredient in their proof of the  sheaf representation, is an easy consequence of the Chinese Remainder Theorem. In \cite{FilGeo1995}, the Chinese Remainder Theorem does not feature explicitly, though its implicit r\^{o}le is clear e.g.\ in \cite[Prop. 2.16]{FilGeo1995}.
 
In the literature on lattice-groups, already Keimel  proved a Chinese Remainder Theorem \cite[Lem.  10.6.3]{BKW1977}  in developing his sheaf representation; and Yang  \cite[Prop.  5.1.2]{Yang06} applies it  to establish the sheaf representation via the co-compact dual of the spectral topology. It appears that Keimel's standard result, whose proof admits a straightforward translation in the language of MV-algebras, went unnoticed in much of the MV-algebraic literature. Indeed, \cite{FilGeo1995}, \cite{DuPo2010} and \cite{DuPo2012} do not refer to it. 

Finally, let us point out that sheaf representations and Chinese Remainder Theorems were  studied at the level of universal algebra by Vaggione in \cite{Vag1992},  whose results extend  the previous treatment by Krauss and Clark \cite{KraCla1979}; see also the earlier papers \cite{Wol1974, Cor1977} in the same direction.
\subsection{Term-definability in property (P)}\label{subsec:termdef}
In the proof of the property (P) in Lemma~\ref{lem:PforDuPo}, we showed that the set $K$ in (\ref{eq:K}) is a clopen downset. Therefore, it must be of the form $\hat{b}$ for a unique $b \in A$ by Priestley duality (Corollary~\ref{cor:prdualobjects}).   We now exhibit an explicit MV-algebraic term for $b$  using  one additional lemma concerning the map $k$, whose proof is an application of canonical extensions.
\begin{lem}\label{lem:countableintersection}
For any $a, u \in A$, we have 
\[\hat{a} \cap k^{-1}(\hat{u}^{\, c}) \subseteq \bigcap_{m = 0}^\infty \widehat{a \ominus mu} \subseteq \hat{a} \cap {\downarrow}k^{-1}(\hat{u}^{\, c}).\]
\end{lem}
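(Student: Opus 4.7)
The plan is to prove the two inclusions separately: the first by a direct induction using the explicit description of $I_{k(x)}$ from Proposition~\ref{prop:kcontinuity}.1, and the second by a Zorn-type prime-ideal construction that extends a certain ``$\oplus u$-closed'' lattice ideal in a manner adapted from the classical prime ideal theorem.

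For the first inclusion $\hat{a} \cap k^{-1}(\hat{u}^{\, c}) \subseteq \bigcap_{m} \widehat{a \ominus mu}$, I would fix $x$ with $a \notin I_x$ and $u \in I_{k(x)}$. By Proposition~\ref{prop:kcontinuity}.1, the condition $u \in I_{k(x)}$ translates to: for every $c \in A$, if $c \ominus u \in I_x$ then $c \in I_x$. I induct on $m$. The base case $a \ominus 0 = a \notin I_x$ is immediate. For the step, I use the identity $a \ominus (m+1)u = (a \ominus mu) \ominus u$ (which follows from associativity of $\oplus$ and the definition of $\ominus$); setting $c := a \ominus mu$, if $a \ominus (m+1)u \in I_x$ then $c \ominus u \in I_x$, forcing $c \in I_x$ and contradicting the inductive hypothesis.

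For the second inclusion $\bigcap_{m} \widehat{a \ominus mu} \subseteq \hat{a} \cap {\downarrow}k^{-1}(\hat{u}^{\, c})$, assume $a \ominus mu \notin I_x$ for all $m \geq 0$; in particular, taking $m=0$ gives $x \in \hat{a}$, so it only remains to find $x' \geq x$ with $u \in I_{k(x')}$. Let $\langle u\rangle := \{c \in A \mid c \leq mu \text{ for some } m \geq 0\}$, the principal MV-ideal generated by $u$, and set $J := I_x \iplus \langle u\rangle$. By Proposition~\ref{prop:additionproperties}.1, $J$ is a lattice ideal, and $J \iplus \langle u\rangle = J$ because $\langle u\rangle$ is closed under $\oplus$. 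By the adjunction $c \ominus mu \leq a' \iff c \leq a' \oplus mu$ (Lemma~\ref{lem:minusplusadjunction}.1), one has $a \in J$ iff $a \ominus mu \in I_x$ for some $m$; by assumption, $a \notin J$. I then apply Zorn's lemma to
\[ \mathcal{P} := \{I' \subseteq A \mid I' \text{ a lattice ideal}, \ J \subseteq I', \ a \notin I', \ I' \iplus \langle u\rangle \subseteq I'\} \]
to obtain a maximal element $I_{x'}$. To see that $I_{x'}$ is prime, suppose $b_1 \wedge b_2 \in I_{x'}$ while $b_1, b_2 \notin I_{x'}$; maximality forces $a$ into the smallest member of $\mathcal{P}$-like closure containing $I_{x'} \cup \{b_i\}$, yielding $a \leq c_i \vee (b_i \oplus m_i u)$ with $c_i \in I_{x'}$ and $m_i \geq 0$. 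Meeting these two inequalities and invoking both the meet-preservation of $\oplus$ (Lemma~\ref{lem:minusplusadjunction}.2) and the $\vee$-distributivity of $\oplus$ (Lemma~\ref{lem:minusplusadjunction}.4) gives $a \leq c \vee ((b_1 \wedge b_2) \oplus mu)$ for some $c \in I_{x'}$ and $m$ large enough, so $a \in I_{x'}$, a contradiction. Thus $I_{x'}$ is a prime lattice ideal, $x'\in X$ satisfies $x' \geq x$, and the condition $I_{x'} \iplus \langle u\rangle \subseteq I_{x'}$ implies $\langle u\rangle \subseteq I_{k(x')}$ by the ``largest ideal'' characterization in Proposition~\ref{prop:kdefinition}(i), hence $u \in I_{k(x')}$.

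The main obstacle is the adaptation of Stone's prime-ideal theorem to lattice ideals that must also satisfy the additional closure condition $I' \iplus \langle u\rangle \subseteq I'$. Both the meet- and join-preservation identities for $\oplus$ from Lemma~\ref{lem:minusplusadjunction} are needed in order for the standard maximality argument to produce the right inequality after meeting. The canonical-extension machinery developed in Section~\ref{sec:canext} does not appear directly in the proof, but it is precisely what furnishes the elementary description of $I_{k(x)}$ used in the first inclusion, and it motivates the choice of $J = I_x \iplus \langle u\rangle$ via the lifted adjunction between $\iminus$ and $\iplus$ (Proposition~\ref{prop:additionproperties}.6).
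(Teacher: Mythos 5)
Your proof is correct. The first inclusion is argued exactly as in the paper: induct on $m$, use the identity $(a \ominus mu) \ominus u = a \ominus (m+1)u$, and apply the description of $I_{k(x)}$ from Proposition~\ref{prop:kcontinuity}.1. For the second inclusion, however, you take a genuinely different route. The paper stays inside the canonical extension: it forms $x' := x \iplus \bigl(\bigvee_{m\geq 0} mu\bigr)$, invokes Proposition~\ref{prop:primeplusidealcanext}.1 to conclude that $x'$ is either $1$ or completely meet-irreducible, rules out $x'=1$ by compactness (which would give $a \ominus mu \leq x$ for some $m$), and then reads off $u \leq k(x')$ from $x' \iplus u \leq x'$. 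Translated to ideals, this says that $J = I_x \iplus \langle u\rangle$ is \emph{already prime} whenever it is proper, so no further extension is needed. You instead avoid Proposition~\ref{prop:primeplusidealcanext} entirely and run a Zorn's lemma argument over lattice ideals containing $J$, omitting $a$, and closed under $\iplus\,\langle u\rangle$; the primality of a maximal such ideal is then established by hand, using the explicit form $\{c \mid c \leq c' \vee (b_i \oplus m u)\}$ of the relevant closure (which needs the join-preservation of $\oplus$ from Lemma~\ref{lem:minusplusadjunction}.4 and the fact that $I_{x'}\iplus\langle u\rangle \subseteq I_{x'}$) together with meet-preservation of $\oplus$ to combine the two inequalities into $a \leq c \vee ((b_1\wedge b_2)\oplus mu)$. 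This is a valid adaptation of the prime ideal theorem, and the final step via the ``largest ideal'' characterization in Proposition~\ref{prop:kdefinition}(i) is exactly right. What the paper's route buys is brevity, since the hard separation work is already packaged in Proposition~\ref{prop:primeplusidealcanext}; what your route buys is an elementary, completion-free argument (and, implicitly, a reproof of the primeness of $I_x \iplus \langle u\rangle$ that does not rely on the double-operator machinery of \cite{GePr07a}).
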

\begin{proof}
Suppose that $x \in \hat{a} \cap k^{-1}(\hat{u}^{\, c})$. We show by induction on $m$ that $x \in \widehat{a \ominus mu}$ for all $m \geq 0$. For $m = 0$, we have $x \in \hat{a}$ by assumption. Suppose that $x \in \widehat{a \ominus mu}$ for some $m$. We show that $x \in \widehat{a \ominus (m+1)u}$.  Since $x \in k^{-1}(\hat{u}^c)$, we have $u \in I_{k(x)}$. Therefore, by Proposition~\ref{prop:kcontinuity} and the assumption that $a \ominus mu \not\in I_x$, we get $(a \ominus mu) \ominus u \not\in I_x$. Now, since $(a \ominus mu) \ominus u \leq a \ominus (m+1)u$, we also have $a \ominus (m+1)u \not\in I_x$, as required.

For the second inclusion, suppose that $x \in \bigcap_{m = 0}^\infty \hat{a \ominus mu}$. Putting $m = 0$, it is clear that $x \in \hat{a}$. To show that $x \in {\downarrow}k^{-1}(\hat{u}^{\, c})$, let
\[ x' := x \iplus \left(\bigvee_{m \geq 0} mu\right).\]
By Proposition~\ref{prop:primeplusidealcanext}.1, we have either $x' \in M^\infty$ or $x' = 1$. If $x' = 1$, then in particular $a \leq x'$, but compactness then yields $m \geq 0$ such that $a \leq x \iplus mu$, i.e.\ $a \ominus mu \leq x$, which is impossible by assumption. We therefore conclude $x' \in M^\infty$, and clearly $x \leq x'$. To show that $x' \in k^{-1}(\hat{u}^{\, c})$, note that
\[ x' \iplus u = x \iplus \left(\bigvee_{m \geq 0} mu\right) \iplus u = x \iplus \left(\bigvee_{m \geq 0} (m+1)u\right) \leq x',\]
so $u \leq k(x')$, by definition of $k(x')$.
\end{proof}
In the next corollary we write $[a]_y$ to denote the congruence class of $a\in A$ modulo $I_y$ in the quotient MV-algebra $A/I_y$. 
\begin{cor}[Term-definability in property (P)]
Let $(u_i)_{i=1}^n$ and let $(a_i)_{i=1}^n$ be two finite collections of elements of $A$ such that $(\hat{u}_i^{\, c})_{i=1}^n$ covers $Y$, and the $a_i$ are compatible with this cover, i.e.,
\[ \text{For all } y \in Y, \text{ if } y \in \hat{u}_i^{\, c} \cap \hat{u}_j^{\, c}, \text{ then } [a_i]_y = [a_j]_y.\]
There exists an integer $t \geq 0$ such that, setting
\[
b := \bigvee_{i=1}^n (a_i \ominus t u_i)\,,
\]
we have $[b]_y = [a_i]_y$ for all $y \in \hat{u}_i^{\, c} \cap Y$.
\end{cor}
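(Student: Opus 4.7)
The plan is to reduce the desired equalities to membership conditions in prime MV-ideals, and then establish these via a compactness argument in the Priestley space $(X,\tau^p)$, combining Lemma~\ref{lem:countableintersection}, the Interpolation Lemma~\ref{lem:interpolation}, and the compatibility hypothesis. Fix $y \in \hat{u}_i^c \cap Y$, so that $u_i \in I_y$, whence $t u_i \in I_y$ because $I_y$ is an MV-ideal; combined with $a_i \ominus t u_i \leq a_i \leq (a_i \ominus t u_i) \oplus t u_i$ this gives $a_i \equiv a_i \ominus t u_i \pmod{I_y}$, so $[b]_y \geq [a_i \ominus t u_i]_y = [a_i]_y$. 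The reverse inequality $[b]_y \leq [a_i]_y$ amounts to requiring $(a_j \ominus t u_j) \ominus a_i \in I_y$ for every $j$, and the standard MV-identity $(x \ominus y) \ominus z = (x \ominus z) \ominus y$ rewrites this as $d_{ij} \ominus t u_j \in I_y$, where $d_{ij} := a_j \ominus a_i$. It thus suffices to find a single $t$ with $\widehat{d_{ij} \ominus t u_j} \cap \hat{u}_i^c \cap Y = \emptyset$ for every pair $(i,j)$.

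The sets $(\widehat{d_{ij} \ominus t u_j} \cap \hat{u}_i^c \cap Y)_{t \geq 0}$ form a decreasing family of closed subsets of the compact Priestley space $(X,\tau^p)$, using that $Y$ is $\tau^p$-closed by Proposition~\ref{prop:plusprops}.4. Hence it is enough to prove that the intersection over $t$ is empty for each pair $(i,j)$; then by compactness some $t_{ij}$ individually realizes the emptiness, and taking $t := \max_{i,j} t_{ij}$ completes the construction.

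For the key emptiness claim, suppose towards contradiction that $y$ lies in the intersection, so $d_{ij} \ominus t u_j \notin I_y$ for every $t \geq 0$. Following the construction in the proof of the second inclusion of Lemma~\ref{lem:countableintersection}, set $x' := y \iplus \bigvee_{t \geq 0} t u_j$; then $x' \in X$, $y \leq x'$, and $u_j \in I_{k(x')}$. Moreover $d_{ij} \notin I_{x'}$: otherwise $d_{ij} \leq x'$ in $A^\delta$ and the compactness property of the canonical extension would produce some $t$ with $d_{ij} \leq y \iplus t u_j$, i.e., $d_{ij} \ominus t u_j \in I_y$, contradicting the choice of $y$. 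Applying the Interpolation Lemma~\ref{lem:interpolation} to $y \leq x'$ now produces $x''$ with $y \leq x'' \leq x'$ and $y^* := k(x'') \in Y$ satisfying $y^* \geq y$ and $y^* \geq k(x')$. Then $u_i \in I_y \subseteq I_{y^*}$ and $u_j \in I_{k(x')} \subseteq I_{y^*}$, so $y^* \in \hat{u}_i^c \cap \hat{u}_j^c \cap Y$, and the compatibility hypothesis forces $[a_i]_{y^*} = [a_j]_{y^*}$, i.e., $d_{ij} \in I_{y^*}$. But $y^* \leq x'' \leq x'$ gives $I_{y^*} \subseteq I_{x'}$, whence $d_{ij} \notin I_{x'}$ yields $d_{ij} \notin I_{y^*}$, a contradiction. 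The main obstacle is precisely this last step: one must combine the \emph{refined} fact visible in the proof (but not the bare statement) of Lemma~\ref{lem:countableintersection}---namely that the witnessing $x'$ additionally satisfies $d_{ij} \notin I_{x'}$---with the Interpolation Lemma, in order to exhibit a prime MV-ideal $y^*$ at which the compatibility hypothesis is forced to fail.
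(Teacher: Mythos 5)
Your proof is correct, but it takes a genuinely different route from the paper's. The paper deduces the corollary from the already-established property (P): by Lemma~\ref{lem:PforDuPo} the set $K=\bigcup_{i}(\hat a_i\cap k^{-1}(\hat u_i^{\,c}))$ is a clopen downset, the two inclusions of Lemma~\ref{lem:countableintersection} sandwich $K$ between the decreasing chains $(\widehat{a_i\ominus m u_i})_m$, and compactness of $(X,\tau^p)$ then yields a single $t$ with $K=\bigcup_i\widehat{a_i\ominus t u_i}=\hat b$; the conclusion follows from Priestley duality and Corollary~\ref{cor:DuPo}. You instead verify the congruences $[b]_y=[a_i]_y$ directly at each prime $y$, reducing everything (via the identity $(x\ominus y)\ominus z=(x\ominus z)\ominus y$) to the single uniform condition $d_{ij}\ominus t u_j\in I_y$ for $y\in\hat u_i^{\,c}\cap Y$, and you establish this by a compactness argument on a different decreasing family of $\tau^p$-closed sets, whose eventual emptiness you prove by re-running the construction $x'=y\iplus\bigvee_t tu_j$ from inside the proof of Lemma~\ref{lem:countableintersection} (extracting the refinement $d_{ij}\notin I_{x'}$, which the bare statement of that lemma does not give), combining it with the Interpolation Lemma~\ref{lem:interpolation} and the compatibility hypothesis to manufacture a point $y^*\in\hat u_i^{\,c}\cap\hat u_j^{\,c}\cap Y$ at which compatibility would fail. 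All the individual steps check out: $k(x'')\leq x''$ holds since $I_{k(x)}\subseteq I_x$ always, $Y$ is $\tau^p$-closed by Proposition~\ref{prop:plusprops}.4, and $x'\in M^\infty\cup\{1\}$ by Proposition~\ref{prop:primeplusidealcanext}.1. What your approach buys is logical independence: you never invoke property (P), Lemma~\ref{lem:PforDuPo}, or the sheaf representation of Corollary~\ref{cor:DuPo}, so your argument is in effect a self-contained, term-explicit proof of the principal-ideal Chinese Remainder Theorem, from which property (P) could conversely be derived. What the paper's route buys is brevity: given the machinery already in place, the corollary is a three-line compactness observation.
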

\begin{proof}
By Lemma~\ref{lem:PforDuPo}, $K := \bigcup_{i=1}^n (\hat{a}_i \cap k^{-1}(\hat{u}_i^{\, c}))$ is a clopen downset. Fix $i \in \{1,\dots,n\}$. Then $\hat{a}_i \cap {\downarrow} k^{-1}(\hat{u}_i^{\, c}) \subseteq K$, so $\bigcap_{m \in \N} \widehat{a_i \ominus mu_i} \subseteq K$ by Lemma~\ref{lem:countableintersection}. Note that, for $m \leq m'$, we have $a_i \ominus m' u_i \leq a_i \ominus m u_i$ using Lemma \ref{lem:minusplusadjunction}. Therefore, $(\hat{a_i \ominus mu_i})_{m \in \N}$ is a decreasing chain of closed sets in $\tau^p$. Since $K$ is compact and it contains the full intersection of this chain, there exists $t_i \geq 0$ such that $\hat{a_i \ominus t_iu_i} \subseteq K$. Choosing such $t_i$ for each $i \in \{1,\dots,n\}$ and setting $t := \max{\{ t_i \mid  i = 1,\dots,n\}}$, we now have
\[ K \subseteq \bigcup_{i=1}^n \left(\bigcap_{m \in \N} \widehat{a_i \ominus mu_i}\right) \subseteq \bigcup_{i=1}^n \left(\hat{a_i \ominus t u_i}\right) \subseteq \bigcup_{i=1}^n \left(\hat{a_i \ominus t_iu_i}\right) \subseteq K,\]
so $K = \bigcup_{i=1}^n \left(\hat{a_i \ominus t u_i}\right)$. Now, putting $b = \bigvee_{i=1}^n (a_i \ominus t u_i)$, we get $\hat{b} = K$, so that this $b$ satisfies the required property, by Priestley duality and Corollary~\ref{cor:DuPo}.
\end{proof}

\subsection{Sheaves of MV-algebras indexed over the maximal MV-spectrum}\label{subsec:sheaves_FG}
The space $(Z,\tau^\downarrow)$ of maximal MV-ideals of $A$ with the spectral topology is a compact Hausdorff space (Proposition \ref{prop:mretract}), and hence in particular a stably compact space (Example~\ref{exa:stabcompsp}.1). Since $(Z,\tau^\downarrow)$ is Hausdorff, the topology $\tau^\downarrow$ restricted to $Z$ is equal to its own co-compact dual topology, by Theorem~\ref{thm:stabcompsp}.5. The composite map $m \circ k : X \to Z$ is continuous from $(X,\tau^p)$ to $(Z,\tau^\downarrow)$ by Lemma~\ref{lem:kaplansky2}, and surjective, as readily follows from Proposition~\ref{prop:fixpointsk} and Proposition~\ref{prop:mretract}. In analogy with Lemma \ref{lem:PforDuPo}:

\begin{lem}\label{lem:mksatisfiesP}
The function $m \circ k : (X,\tau^p,\leq) \to (Z,\tau^\downarrow)$ satisfies property \textup{(P)} from Theorem~\ref{thm:decompositiongivessheaf}.
\end{lem}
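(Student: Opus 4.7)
The plan is to follow the structure of the proof of Lemma~\ref{lem:PforDuPo}, but the argument will actually simplify because $m \circ k$ has two convenient properties that $k$ alone lacks. First, by Lemma~\ref{lem:kaplansky2}, $m \circ k \colon (X, \tau^p) \to (Z, \tau^\downarrow)$ is continuous, so preimages of open sets are already open in $\tau^p$---there is no need to pass to compact-opens under a dual topology. Second, by Lemma~\ref{lem:maximals}, the map $m \circ k$ is constant on chains in $(X,\leq)$, which will make the downset condition automatic and so obviate the use of the Interpolation Lemma~\ref{lem:interpolation}.

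Before proceeding, I would observe that by Proposition~\ref{prop:mretract} the space $(Z,\tau^\downarrow)$ is compact Hausdorff, so its specialization order is trivial; by Theorem~\ref{thm:stabcompsp}.5 this yields $(\tau^\downarrow)^\dual = \tau^\downarrow$ on $Z$, and hence each $U_l$ in the hypothesis of (P) is simply an open set of $(Z,\tau^\downarrow)$. Set $K := \bigcup_{l=1}^n \bigl(K_l \cap (m\circ k)^{-1}(U_l)\bigr)$. Since each $K_l$ is clopen in $\tau^p$ and each $(m\circ k)^{-1}(U_l)$ is open in $\tau^p$ by continuity, each intersection $K_l \cap (m\circ k)^{-1}(U_l)$ is open, so $K$ is open as a finite union of opens.

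For closedness, I would prove the identity $K^c = \bigcup_{l=1}^n \bigl(K_l^c \cap (m\circ k)^{-1}(U_l)\bigr)$. The inclusion $K^c \subseteq \bigcup(\cdots)$ uses only that $(U_l)_{l=1}^n$ covers $Z$: for any $x \in K^c$, pick $l$ with $(m\circ k)(x)\in U_l$, and since $x\in(m\circ k)^{-1}(U_l)\setminus K$ we infer $x\in K_l^c$. The reverse inclusion is where the compatibility condition (\ref{label:star}) enters: if $x \in K_l^c \cap (m\circ k)^{-1}(U_l)$ and also $x\in K_m\cap(m\circ k)^{-1}(U_m)$, then $x\in(m\circ k)^{-1}(U_l\cap U_m)$, and (\ref{label:star}) forces $x\in K_l$, contradicting $x\in K_l^c$. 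Since $K_l^c$ is open and $(m\circ k)^{-1}(U_l)$ is open, the right-hand side is open, so $K$ is closed.

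Finally, for the downset property, I would argue directly: suppose $x \leq x'$ and $x' \in K_l \cap (m\circ k)^{-1}(U_l)$. Then $x \in K_l$ since $K_l$ is a downset, and by Lemma~\ref{lem:maximals} we have $(m\circ k)(x) = (m\circ k)(x') \in U_l$, so $x \in K$. I do not anticipate any serious obstacle in executing this plan; the one point to get right is the reverse inclusion in the formula for $K^c$, which is the only place where the compatibility condition is actually used.
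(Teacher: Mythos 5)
Your proposal is correct and follows essentially the same route as the paper: open-ness of $K$ from continuity of $m\circ k$, closedness via the identity $K^c=\bigcup_{l=1}^n\bigl(K_l^{\,c}\cap(m\circ k)^{-1}(U_l)\bigr)$ proved exactly as in Lemma~\ref{lem:PforDuPo}, and the downset property coming for free because $(m\circ k)^{-1}(U_l)$ is a downset. The only cosmetic difference is that you invoke Lemma~\ref{lem:maximals} directly for the downset step, whereas the paper packages the same fact into the statement that $m\circ k$ is $\tau^\downarrow$-continuous (Lemma~\ref{lem:kaplansky2}), so that each $(m\circ k)^{-1}(U_l)$ is an open downset.
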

\begin{proof}
Let $(U_l)_{l=1}^n$ be a cover of $Z$ by open sets in the topology $(\tau^\downarrow)^\dual = \tau^\downarrow$, and let $(K_l)_{l=1}^n$ be a finite collection of clopen downsets of $X$, which satisfies the hypothesis of property (P). We need to show that $K := \bigcup_{l=1}^n (K_l \cap (m \circ k)^{-1}(U_l))$ is a clopen downset in $X$. Since $m\circ k\colon (X,\tau^\downarrow)\to (Z,\tau^\downarrow)$ is continuous (Lemma~\ref{lem:kaplansky2}), each set $(m \circ k)^{-1}(U_l)$ is an open downset, so $K$ is an open downset. To see that $K$ is moreover closed, notice that $K^c = \bigcup_{l=1}^n (K_l^{\,c} \cap (m \circ k)^{-1}(U_l))$: the proof of this equality is, \textit{mutatis mutandis}, the same as the proof of the claim in Lemma~\ref{lem:PforDuPo}. Now $K^c$ is clearly open, so $K$ is closed.
\end{proof}

Again, we  obtain a sheaf representation $p \colon E \to Z$ from the map $m \circ k$.  Recall from Section~\ref{sec:sheaf} that the underlying set of the \'etale space $E$ is defined to be the disjoint union of algebras $A_z$, for $z \in Z$. Here, $A_z$ is defined to be the quotient of $A$ corresponding to the subspace $(m \circ k)^{-1}({\uparrow}z) = (m \circ k)^{-1}(z)$. The MV-algebra corresponding to the subspace $(m \circ k)^{-1}(z)$ can be obtained as follows.
\begin{prop}\label{prop:germinal}For any $z\in Z$, the closed subspace $(m \circ k)^{-1}(z)$ of $X$ is homeomorphic to the Priestley dual space of the lattice underlying the MV-algebraic quotient $A/\mathfrak{o}_{z}$ of $A$, where the MV-ideal $\mathfrak{o}_{z}$ is defined by\footnote{That is, the MV-ideal $\mathfrak{o}_{z}$ is the intersection of all prime MV-ideals contained in the maximal MV-ideal $I_z$. This is  the \emph{germinal MV-ideal at $z \in Z$}, see \cite[Def. 4.7]{Mun2011}.} 
\[
\mathfrak{o}_{z}:=\bigcap_{y \,\in \, {\downarrow} z \cap Y}I_y.
\]
\end{prop}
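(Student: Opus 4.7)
The plan is to reduce the proposition to the set-theoretic equality
\[
(m \circ k)^{-1}(z) \ = \ S_{\mathfrak{o}_z} \ = \ \{x \in X \mid I_x \iplus \mathfrak{o}_z \subseteq I_x\}.
\]
Once this is in hand, Proposition~\ref{prop:quotientdually} applied to the MV-ideal $\mathfrak{o}_z$ (an intersection of MV-ideals is again an MV-ideal, and $\mathfrak{o}_z$ is non-empty because it contains $0$) identifies $S_{\mathfrak{o}_z}$ with the Priestley dual of $A/\mathfrak{o}_z$; since both sides of the displayed equality carry the subspace topology and order inherited from $(X,\tau^p,\leq)$, the asserted homeomorphism follows.

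The next step is to unfold both sides using $k$. By Proposition~\ref{prop:kdefinition}, the condition defining $S_{\mathfrak{o}_z}$ is equivalent to $\mathfrak{o}_z \subseteq I_{k(x)}$. On the other hand, $x \in (m\circ k)^{-1}(z)$ means exactly that $z$ is the unique maximal MV-ideal containing $I_{k(x)}$, which by Corollary~\ref{cor:rootsystem} is equivalent to $k(x) \leq z$, i.e.\ $k(x) \in {\downarrow}z\cap Y$. Since $k(x)$ always lies in $Y$, everything reduces to proving the equivalence
\[
 \mathfrak{o}_z \subseteq I_y \ \iff \ y \in {\downarrow}z \cap Y \qquad \textrm{for all } y \in Y.
\]
The implication ($\Leftarrow$) is immediate from the defining formula $\mathfrak{o}_z = \bigcap_{y \in {\downarrow}z\cap Y} I_y$.

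The main obstacle is the forward direction; the plan is to convert it into a closure statement in $(Y,\tau^\downarrow)$. By Corollary~\ref{cor:rootsystem} we have $m^{-1}(z) = {\downarrow}z\cap Y$, and by Proposition~\ref{prop:mretract} the space $(Z,\tau^\downarrow)$ is Hausdorff and $m$ is continuous, so ${\downarrow}z \cap Y$ is the continuous preimage of the singleton $\{z\}$ and hence closed in $(Y,\tau^\downarrow)$. By the description of closed subsets of $(Y,\tau^\downarrow)$ recorded in Remark~\ref{r:mv-spectra}, these are precisely the hulls $\{y \in Y \mid I \subseteq I_y\}$ with $I$ ranging over MV-ideals of $A$, and a standard computation of the closure operator then shows that the closure of any $S\subseteq Y$ is $\{y \in Y \mid \bigcap_{y' \in S} I_{y'} \subseteq I_y\}$. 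Applying this to the already-closed set $S = {\downarrow}z\cap Y$, whose associated MV-ideal is $\mathfrak{o}_z$ by definition, yields exactly $\{y \in Y \mid \mathfrak{o}_z \subseteq I_y\} = {\downarrow}z \cap Y$, completing the proof. Without such a topological detour, the forward direction would be an infinitary prime-avoidance statement---false in general ring-theoretic settings---and it is precisely the normality of $\mathrm{KCon}\, A$ (hidden inside Proposition~\ref{prop:mretract}) together with the root-system property that enables this conversion.
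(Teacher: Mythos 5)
Your proposal is correct, but the key step is handled by a genuinely different argument than the paper's. Both proofs begin identically: apply Proposition~\ref{prop:quotientdually} to $\mathfrak{o}_z$, rewrite $S_{\mathfrak{o}_z}$ as $\{x \in X \mid \mathfrak{o}_z \subseteq I_{k(x)}\}$ via Proposition~\ref{prop:kdefinition}, and observe that the easy inclusion ($m(k(x))=z$ implies $\mathfrak{o}_z \subseteq I_{k(x)}$) is immediate from the definition of $\mathfrak{o}_z$. For the nontrivial direction the paper argues contrapositively and concretely: if $m(k(x)) \neq z$ then $k(x)$ and $z$ are order-incomparable in $Y$, and Proposition~\ref{prop:separation} produces explicit elements $u,v$ with $u \wedge v = 0$, $u \notin I_{k(x)}$, $v \notin I_z$; primeness then forces $u$ into every prime MV-ideal below $I_z$, hence $u \in \mathfrak{o}_z \setminus I_{k(x)}$. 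You instead recast the claim as the statement that ${\downarrow}z \cap Y = m^{-1}(z)$ equals its own hull-kernel closure, which follows because $m$ is continuous onto the Hausdorff space $(Z,\tau^\downarrow)$ (Proposition~\ref{prop:mretract}) and because closed sets of $(Y,\tau^\downarrow)$ are exactly the hulls $\{y \mid I \subseteq I_y\}$ of MV-ideals (Remark~\ref{r:mv-spectra}), with closure given by the hull of the kernel. This is valid and arguably more conceptual --- it identifies the proposition as precisely the assertion that the fibre of $m$ is hull-kernel closed --- but it is not more elementary: the continuity of $m$ rests on the normality of $\mathrm{KCon}\,A$ and Lemma~\ref{lem:johnstone}, which in turn rest on the same separation property (Proposition~\ref{prop:separation} via Corollary~\ref{cor:rootsystem}) that the paper invokes directly. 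There is no circularity, since Proposition~\ref{prop:mretract} is established before this point; the paper's route simply keeps the witnesses explicit, while yours trades them for previously built topological machinery.
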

\begin{proof}
By Proposition~\ref{prop:quotientdually}, the quotient $A/\mathfrak{o}_{z}$ corresponds under Priestley duality to the subspace
\[
S_{\mathfrak{o}_{z}}=\{x\in X \mid I_{x}\overline{\oplus}\,\mathfrak{o}_{z} \subseteq I_{x}\}\,.
\]
Recall that, for any MV-ideal $J$, we have $I_x \oplus J \subseteq I_x$ if, and only if, $J \subseteq I_{k(x)}$, so we can write
\[
S_{\mathfrak{o}_{z}}=\{x\in X \mid \mathfrak{o}_{z} \subseteq I_{k(x)}\}\,.
\]
We will now prove that $S_{\mathfrak{o}_{z}} = (m\circ k)^{-1}(z)$. Let $x \in X$, and suppose first that $m\circ k(x) = z$. Then $I_{k(x)} \subseteq I_z$ by definition of $m$, so $\mathfrak{o}_z \subseteq I_{k(x)}$ by definition of $\mathfrak{o}_z$. 

Conversely, suppose that $m\circ k(x) \neq z$. Then $k(x) \nleq z$, since every prime MV-ideal is contained in a unique maximal ideal (Corollary \ref{cor:rootsystem}), and also that $z \nleq k(x)$, since $z$ is a maximal MV-ideal. By Proposition~\ref{prop:separation}, pick $u, v \in A$ such that $k(x) \in \hat{u}$, $z \in \hat{v}$ and $u \wedge v = 0$. If $I_y$ is any prime MV-ideal which is contained in $z$, then in particular $0 \in I_y$ and $v \not\in I_y$, so that $u \in I_y$. Hence, $u \in \mathfrak{o}_z$ by definition of $\mathfrak{o}_z$. However, by construction, we have $u \not\in I_{k(x)}$, so $\mathfrak{o}_z \not\subseteq I_{k(x)}$, and therefore $x \not\in S_{\mathfrak{o}_{z}}$.
\end{proof}

By the preceding proposition, the stalk of the \'etale space $E$ at $z \in Z$ is the MV-algebra $A/\mathfrak{o}_z$. So $E$ is the disjoint union of the MV-algebras $A/\mathfrak{o}_z$, for $z \in Z$, with the topology given by the global sections $s_a$. This  is exactly the \'etale space used in  \cite{FilGeo1995}, so that we obtain, by applying Corollary~\ref{cor:dlesheaf} in the light of Lemma~\ref{lem:mksatisfiesP}:
\begin{cor}[{\cite[Prop. 2.16]{FilGeo1995}}]\label{cor:FilGeo}
Any MV-algebra is isomorphic to the MV-algebra of global sections of the \'etale space over $(Z,\tau^\downarrow)$ whose stalk at $z \in Z$ is the quotient by the MV-ideal $\mathfrak{o}_z$ at $z$.\qed
\end{cor}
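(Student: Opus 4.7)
The plan is to obtain the stated sheaf representation as a direct application of Corollary~\ref{cor:dlesheaf} to the continuous surjection $q := m\circ k\colon (X,\tau^p,\leq) \onto (Z,\tau^\downarrow)$, exactly in parallel with the argument that proves Corollary~\ref{cor:DuPo}. The three conditions that have to be verified are: (i) the codomain $(Z,\tau^\downarrow)$ is stably compact and its co-compact dual coincides with $\tau^\downarrow$, so that the \'etale space built in Theorem~\ref{thm:decompositiongivessheaf} is indeed indexed over $(Z,\tau^\downarrow)$; (ii) $q$ satisfies the patching property (P) of Theorem~\ref{thm:decompositiongivessheaf}; and (iii) the stalk at each point $z\in Z$ is the MV-algebra $A/\mathfrak{o}_z$, with the canonical quotient map being an MV-homomorphism.

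First I would dispatch (i): by Proposition~\ref{prop:mretract}, $(Z,\tau^\downarrow)$ is compact Hausdorff, hence stably compact by Example~\ref{exa:stabcompsp}; since its specialization order is discrete, Theorem~\ref{thm:stabcompsp}.5 yields $(\tau^\downarrow)^\dual = \tau^\downarrow$. Next, the continuity of $q\colon (X,\tau^p)\to (Z,\tau^\downarrow)$ is Lemma~\ref{lem:kaplansky2}, and its surjectivity is immediate: every $z\in Z$ lies in $Z\subseteq Y\subseteq X$ and satisfies $k(z)=z$ by Proposition~\ref{prop:fixpointsk}, hence $q(z)=m(z)=z$. Item (ii) is precisely Lemma~\ref{lem:mksatisfiesP}.

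For (iii), I would use Proposition~\ref{prop:germinal}. In the general construction of Section~\ref{sec:sheaf}, the stalk at $z$ is the lattice quotient of $A$ corresponding to the closed subspace $q^{-1}({\uparrow}z)$ of $X$. Since the specialization order on $(Z,\tau^\downarrow)$ is trivial, ${\uparrow}z = \{z\}$, so this subspace is $(m\circ k)^{-1}(z)$. Proposition~\ref{prop:germinal} identifies the corresponding quotient lattice with the underlying lattice of $A/\mathfrak{o}_z$, and because $\mathfrak{o}_z$ is an MV-ideal the quotient map $A\onto A/\mathfrak{o}_z$ is an MV-algebra homomorphism; thus the hypothesis of Corollary~\ref{cor:dlesheaf} on the variety $\V$ of MV-algebras is met.

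Putting all this together, Corollary~\ref{cor:dlesheaf} yields that $A$ is isomorphic, as an MV-algebra, to the algebra of global sections $\Gamma(E,p)$ of the \'etale space $p\colon (E,\sigma)\onto (Z,\tau^\downarrow)$ constructed from $q$, whose fibre at $z$ is $A/\mathfrak{o}_z$. I do not foresee a genuine obstacle here: the content of the corollary is essentially repackaging, and all the substantial work has already been done in Lemma~\ref{lem:mksatisfiesP} (where the crucial use of the Interpolation Lemma and the specific behaviour of $k$ on chains took place) and in Proposition~\ref{prop:germinal} (where the identification of the stalks was made via the canonical extension machinery). The only minor care required is the observation that ${\uparrow}z=\{z\}$ in $(Z,\tau^\downarrow)$, which ensures that the fibres of $q$ over points of $Z$ — rather than over upsets of points — are the relevant subspaces indexing the stalks.
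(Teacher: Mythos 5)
Your proposal is correct and follows exactly the paper's own route: the paper likewise notes that $(Z,\tau^\downarrow)$ is compact Hausdorff hence stably compact and self-dual, invokes Lemma~\ref{lem:kaplansky2} and Proposition~\ref{prop:fixpointsk} for continuity and surjectivity of $m\circ k$, uses Lemma~\ref{lem:mksatisfiesP} for property (P), identifies the stalks via $(m\circ k)^{-1}({\uparrow}z)=(m\circ k)^{-1}(z)$ and Proposition~\ref{prop:germinal}, and concludes by Corollary~\ref{cor:dlesheaf}. No gaps.
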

\begin{rem}\label{rem:order comp}As shown in Section~\ref{sec:kaplansky}, the sets appearing in the partition  $\{(m\circ k)^{-1}(z)\}_{z \in Z}$ are the order-components of $(X,\leq)$, and are in fact also the equivalence classes of the equivalence relation $W$ defined there. Moreover, every $W$-equivalence class contains exactly one point of $Z$, and the space $X/W$ is homeomorphic to $Z$, as was  proved in Proposition~\ref{prop:kaplansky}.\qed
\end{rem}
\section{Further research}\label{sec:further}
The method of Theorem~\ref{thm:decompositiongivessheaf} can be applied to any variety of algebras with a distributive lattice reduct. It would be of interest to investigate other significant classes of algebras, besides lattice-ordered abelian groups and MV-algebras, for which sheaf-theoretic representations may be obtained from an \'etale decomposition of the lattice spectrum. We also remark that condition (P) in Theorem \ref{thm:decompositiongivessheaf} is sufficient but far from necessary in order  that the map $\eta$ be a surjection. It would be  worthwhile to explore  weaker sufficient conditions.

Our Proposition \ref{prop:plusprops} determines enough of the structure of the dual Stone-Priestley space of an MV-algebra to establish our main results on \'etale decompositions.  It has been a long-standing open problem in the theory of lattice-ordered abelian groups to characterise spectra of prime congruences in topological terms, see \cite[and references therein]{Cignolietal99}. The problem of characterizing the dual Stone-Priestley space of a unital lattice-ordered abelian group is likewise open, to the best of our knowledge. In light of Proposition \ref{prop:plusprops}, we wonder whether a  characterisation that uses the partial semigroup structure described there is possible. A further question of interest is to investigate the functoriality of the construction under appropriate duals of MV-algebraic homomorphisms.

Theorem \ref{thm:kaplansky}, the MV-algebraic generalization  of Kaplansky's theorem, shows that one can construct the maximal MV-spectrum of any MV-algebra from its underlying lattice only. Example \ref{ex:kap} shows that this fails for prime MV-spectra. The question arises  whether there are significant classes of MV-algebras with the property that the prime MV-spectrum is uniquely determined by the underlying lattice of the MV-algebra.  Recall that an algebra in a variety is called \emph{simple} if it has no non-trivial congruences, and \emph{semisimple} if it is a subdirect product of simple algebras.  Semisimple MV-algebras are precisely those that arise as subalgebras of the MV-algebra $\C{(S, [0,1])}$ of all continuous $[0,1]$-valued functions on a compact Hausdorff space $S$, see \cite[Cor. 3.6.8]{cdm}. Example \ref{ex:kap} uses a non-semisimple MV-algebra. An MV-algebra that is semisimple, and is such that each of its quotient algebras also has that property, is known as \emph{hyperarchimedean} in the MV-algebraic literature \cite[Sec. 6.3]{cdm}. Equivalently, an MV-algebra is hyperarchimedean just in case it has no non-maximal prime MV-ideals \cite[Sec. 6.3.2]{cdm}. Therefore the underlying lattice of a hyperarchimedean MV-algebra determines its prime MV-spectrum to within a homeomorphism by Theorem \ref{thm:kaplansky}.  We wonder whether the semisimple property, or a  strengthening thereof that is weaker than  the hyperarchimedean property, suffices to guarantee that the prime MV-spectrum of an MV-algebra be uniquely determined by its underlying lattice.

\section*{Acknowledgements}
The work of the first-named author was partially supported by ANR 2010 BLAN 0202 02 FREC. The PhD research project of the second-named author has been made possible by  grant 617.023.815 of the Netherlands Organization for Scientific Research (NWO). The work of the third-named author was partially supported by the Italian FIRB ``Futuro in Ricerca'' grant RBFR10DGUA, and by the Italian PRIN grant 2008JKBJJF.
The authors also gratefully acknowledge the support of these last two grants for funding a research visit to Milan of the first-named and second-named authors.
We are grateful to Roberto Cignoli,  Lauren{\c{t}}iu Leu{\c{s}}tean, and Daniele Mundici for bringing \cite{Vag1992},   \cite{DiNolaLeustean2003}, and \cite{Schwartz2013} to our attention, respectively. We also thank Klaus Keimel for his valuable comments. Finally, we thank the anonymous referees for their valuable comments and suggestions.

\bibliographystyle{amsplain}
\bibliography{GGM}

\end{document}